\newcommand{\bna}{\begin{eqnarray}}
\newcommand{\ena}{\end{eqnarray}}
\newcommand{\ba}{\begin{eqnarray*}}
\newcommand{\ea}{\end{eqnarray*}}
\newcommand{\bs}[1]{}
\newtheorem{theorem}{Theorem}[section]
\newtheorem{corollary}{Corollary}[theorem]
\newtheorem{lemma}[theorem]{Lemma}
\numberwithin{figure}{section}
\newtheorem{proposition}{Proposition}[section]
\newtheorem{definition}{Definition}[section]%[equation]
\numberwithin{equation}{section}
\DeclareMathOperator{\Dim}{Dim}
\DeclareMathOperator{\rank}{rank}
\def\Cal{\mathcal C}
\def\CalM{\mathcal M}
\def\CalH{\mathcal H}
\def\CalA{\mathcal A}
\def\CalP{\mathcal P}
\def\A{\mathcal A}
\def\R{\mathbb R}
\def\RP{\mathbb R \mathbb P }
\def\p{{\bf p}}
\def\b{{\bf b}}
\def\pn{{\bf p =(p_1, \dots, p_n) }}
\def\q{{\bf q}}
\def\qn{{\bf q =(q_1, \dots, q_n) }}
\def\v{{\bf v}}
\def\w{{\bf w}}
\def\e{{\bf e}}
\def\r{{\bf r}}
\def\x{{\bf x}}
\def\int{{\text{int}}}
\begin{document}

%%%%%%%%%%%%%%%%%%
\title{Iterative Universal Rigidity}

\author{Robert Connelly and  
Steven J. Gortler}
\maketitle

%%%%%%%%%%%%%%%%
\begin{abstract}  
A bar framework determined by a finite graph $G$ and configuration
$\pn$ in $\R^d$ is \emph{universally rigid} if it is rigid in any
$\R^D \supset \R^d$. We provide a characterization of
universally rigidity
for any graph $G$ and any configuration $\p$ in terms of a sequence of
affine subsets of the space of configurations.  This corresponds to a
facial reduction process for closed finite dimensional convex cones.
\end{abstract}
{\bf Keywords: } rigidity, prestress stability, universal rigidity,
global rigidity, infinitesimal rigidity, super stability, framework,
tensegrity, dimensional rigidity, self stress, equilibrium stress,
measurement set, generic, semi-definite programming, positive
semi-definite (PSD) matrices, point location, form finding

%%%%%%%%%%%%%%%%%
\section{Introduction}\label{sect:introduction}
%%%%%%%%%%%%%%%%%

%%%%%%%%%%%%%%%
\subsection{Basic Definitions}\label{subsect:basics}
%%%%%%%%%%%%%%%
Given a configuration $\pn$ of $n$ points in $\R^d$, and a finite
graph $G$, without loops or multiple edges, on those $n$ points one
can ask the natural and fundamental question:
is there another configuration $\qn$ in $\R^d$, where the distance
between $\p_i$ and $\p_j$, is the same as the distance between $\q_i$ and
$\q_j$ when $\{i,j\}$ is an edge of $G$?  When this happens we say
that $(G, \p)$ is \emph{equivalent} to $(G, \q)$.  (Traditionally
$G(\p)$ and $G(\q)$ is the notation used for $(G, \p)$ and $(G, \q)$,
which are called \emph{(bar) frameworks}, but we break that tradition
here.) Of course, if there is a congruence between $\p$ and $\q$, they
are called \emph{trivially equivalent} or \emph{congruent},
since all pairs of distances are the
same.

The following are a sequence of ever stronger rigidity
properties of frameworks, where $(G,\p)$ is a framework on $n$
vertices in $\R^d$.
\begin{itemize}
\item If all the frameworks $(G, \q)$ in $\R^d$ equivalent to $(G,
  \p)$ and sufficiently close to $(G, \p)$ are trivially equivalent to
  $(G, \p)$ we say that $(G, \p)$ is \emph{locally rigid in $\R^d$}
  (or just \emph{rigid in $\R^d$}).
\item If all the frameworks $(G, \q)$ in $\R^d$ equivalent to $(G,
  \p)$ are congruent to $(G, \p)$ we say that $(G, \p)$ is
  \emph{globally rigid in $\R^d$}.
\item If all the frameworks $(G, \q)$ in any $\R^D \supset \R^d$
  equivalent to $(G, \p)$ are trivially equivalent to $(G, \p)$,
%and $(G, \p)$ has affine span of dimension $d$,
 we say
  that $(G, \p)$ is \emph{universally rigid}.
\end{itemize}

%%%%%%%%%%%%%%%
\subsection{Main Result}\label{subsect:result}
%%%%%%%%%%%%%%%

It is well known that the existence  of a certain kind of ``stress'' matrix 
associated with a specific framework is sufficient to prove its  universal rigidity~\cite{Connelly-energy}. 
It is also known that when a ``generic'' framework
is universal rigidity, it is also
necessary for  it to have this type of associated
stress matrix~\cite{Gortler-Thurston2}. But there do exist  special frameworks that are universally
rigid while not possessing such a matrix.
In this paper, we propose a new criterion in terms of a certain ``sequence
of stress matrices'' which gives a complete (necessary and sufficient)
characterization of universal rigidity for any specific framework in
any dimension of any graph.

The validity of this certificate can be checked efficiently and
deterministically in the real computational mode of~\cite{Smale-NP}. 
We need to use a real model, since even if $\p$ is described using
rational numbers, the stress matrix might have irrational entries.
As such, this means
that universal rigidity is in the class \emph{NP} under this real
computational model. Note that universal rigidity is
clearly in \emph{CO-NP} under real valued computation since the non
universal rigidity of a framework $\p$ can always be certified by a
providing an equivalent non-congruent framework $\q$.

The main result will be explained in Section
\ref{sect:affine-constraints} and Theorem \ref{thm:main}.  We will
derive our results in a self-contained manner, but note that
technically, what we have is really a thinly disguised version of a
known technique called ``facial reduction'' which is used to analyze
convex cone programs \cite{Borwein-Wolkowicz}.  The connection is
explained explicitly in Section \ref{sect:facial}.

%%%%%%%%%%%%%%%
\subsection{Relation to other forms of rigidity}\label{subsect:others}
%%%%%%%%%%%%%%%

Given $(G, \p)$, testing for local or global rigidity is
known to be a hard computational problem~\cite{Abbot-Hard, saxe}.
Fortunately, this is not the end of the story.
For local and global rigidity, the problems become much easier if we
assume that $\p$ is generic.  (We say that a configuration $\p$ is
\emph{generic in $\R^d$} if all the coordinates of all the points of
$\p$ are algebraically independent over the rationals.  This means, in
particular, there can be no symmetries in the configuration, no three
points are collinear for $d \ge 2$, etc).  Local and global rigidity
have efficient randomized algorithms  under the
assumption that the configuration is generic, (and for $d=1$ or
$d=2$, there are even purely combinatorial polynomial-time algorithms).  See
\cite{Connelly-rigidity, Connelly-global, Gortler-Thurston,
  Whiteley-survey} for information about all of these concepts.  In
particular, both local and global rigidity in $\R^d$ are generic
properties of a graph $G$. That is, either all generic frameworks are
rigid, or none of them are, and so these properties only depend on the graph $G$ and not on the configuration $\p$.

One justification for assuming that a configuration
is generic is that in any region, the generic configurations form a
set of full measure.  In other words, if a configuration is chosen
from a continuous distribution, with probability one, it will be
generic, and with any physical system, there will always be some
indeterminacy with respect to the coordinates.  But the problem is
that special features of a particular configuration, such as symmetry,
collinearity, overlapping vertices, etc, may be of interest and they
are necessarily non-generic. In this paper we do not want to restrict
ourselves to generic frameworks.

In order to test for local rigidity of a specific non-generic framework
there is a natural sufficient condition to use,
namely \emph{infinitesimal rigidity}. This says that in
$\R^d$ (for $n \ge d$) the rank of the rigidity matrix $R(\p)$ is
$nd-d(d+1)/2$, where $R(\p)$ is an $m$-by-$nd$ (sparse) matrix with
integer linear entries, where $m$ is the number of \emph{members}
(another name for the bars) as defined in Section
\ref{sect:measurement}.  See also \cite{Whiteley-survey}, for
example. Infinitesimal rigidity of $(G,\p)$ can
can be computed
efficiently~\cite{Whiteley-survey}.

Infinitesimal rigidity is simply a linearized form of local rigidity
and thus is a very natural sufficient condition to use for
testing the local rigidity of $(G,\p)$. In fact, the matrix
test for infinitesimal rigidity 
is central to the determination of generic local rigidity for $G$
just described.
In contrast, we do not have such a natural sufficient condition to use
for global rigidity. Indeed, the particular matrix test used to 
compute generic global rigidity for the graph
$G$ does not give us information about the
global rigidity of any specific framework $(G,\p)$~\cite{Connelly-global}.

Thus, 
in order to test for global rigidity of a specific non-generic framework,
we often resort to ``stronger'' conditions;
perhaps the most usable such sufficient condition is, 
universal rigidity.   In this context, one can choose the ambient  dimension  $D$ to be, say $n-1$ with no loss in generality.  As such, understanding universal rigidity
can be essential to determining global rigidity, and it is the
focus of this paper.

%%%%%%%%%%%%%%%
\subsection{Complexity Issues}\label{subsect:complexity}
%%%%%%%%%%%%%%%

The theoretical complexity of testing universal rigidity for $(G,\p)$,
(even when $\p$ is given by integer-valued input)
is technically unknown. There are no known hardness results, nor
are there any provably correct efficient algorithms.
One can pose the problem of
universal rigidity
in the language of semi-definite programing (SDP)~\cite{Ye-So}. 
Unfortunately, the complexity for
for conclusively deciding an SDP feasibility problem is itself
unknown~\cite{Ramana}. 

In practice, one can use a numerical (say interior point) SDP solver
for these problems. Roughly speaking, 
this can efficiently find a framework with an affine
span of dimension $n-1$ (the highest possible dimension) that is
within $\epsilon$ of being equivalent to the given framework. 
If this framework
appears to ``almost'' have an affine span of dimension $d$, and
appears to 
be ``very close'' to 
the input $\p$, then we have strong ``evidence'' for universal
rigidity. But it is unclear how to use this 
to make a determination with provable correctness properties.  In effect,
this means, in the case with imprecise input, that the problem to determine whether the framework is universally rigid 
cannot be solved because there is not enough information in the input to be able to 
solve it.

An exasperating issue is that there can be great 
sensitivity between errors in achieving desired edge
lengths (which are what we get when using an SDP solver)
and errors in the resulting configuration.
Figure~\ref{fig:iterated-rigid} shows a framework (with pinned vertices)
that is universally rigid in $\R^2$. 
We will see that this can be verified using methods
described in this paper.
If the lengths in Figure \ref{fig:iterated-rigid} are all increased by
less that $0.5\%$, Figure \ref{fig:iterated-rigid-2} shows the
resulting realization in the plane.  Note that this slightly perturbed
framework is far from universally rigid.
Here we see  that a very small error in the
numerical calculation of the lengths of the members can lead to a very
large perturbation of the resulting configuration, and, indeed, the
decision as to universal rigidity may  be incorrect.

\begin{figure}
\centering
\begin{minipage}{.51\textwidth}
  \centering
  \includegraphics[width=.69\linewidth]{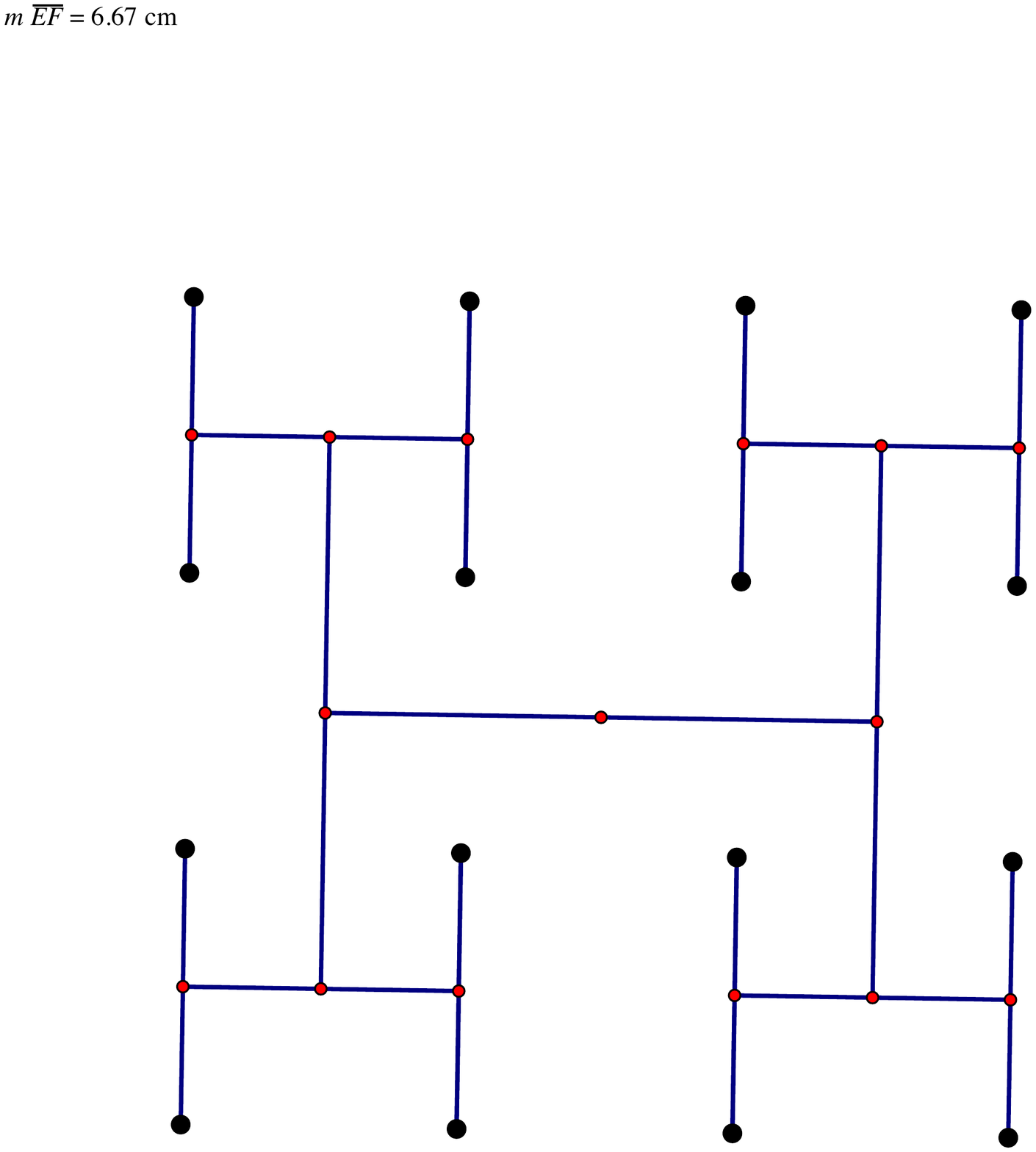}
  \captionof{figure}{The large black vertices are pinned to the plane, and the whole framework is universally rigid as 
in Corollary \ref{cor:main}.}
    \label{fig:iterated-rigid}
\end{minipage}%
\begin{minipage}{.51\textwidth}
  \centering
  \includegraphics[width=.69\linewidth]{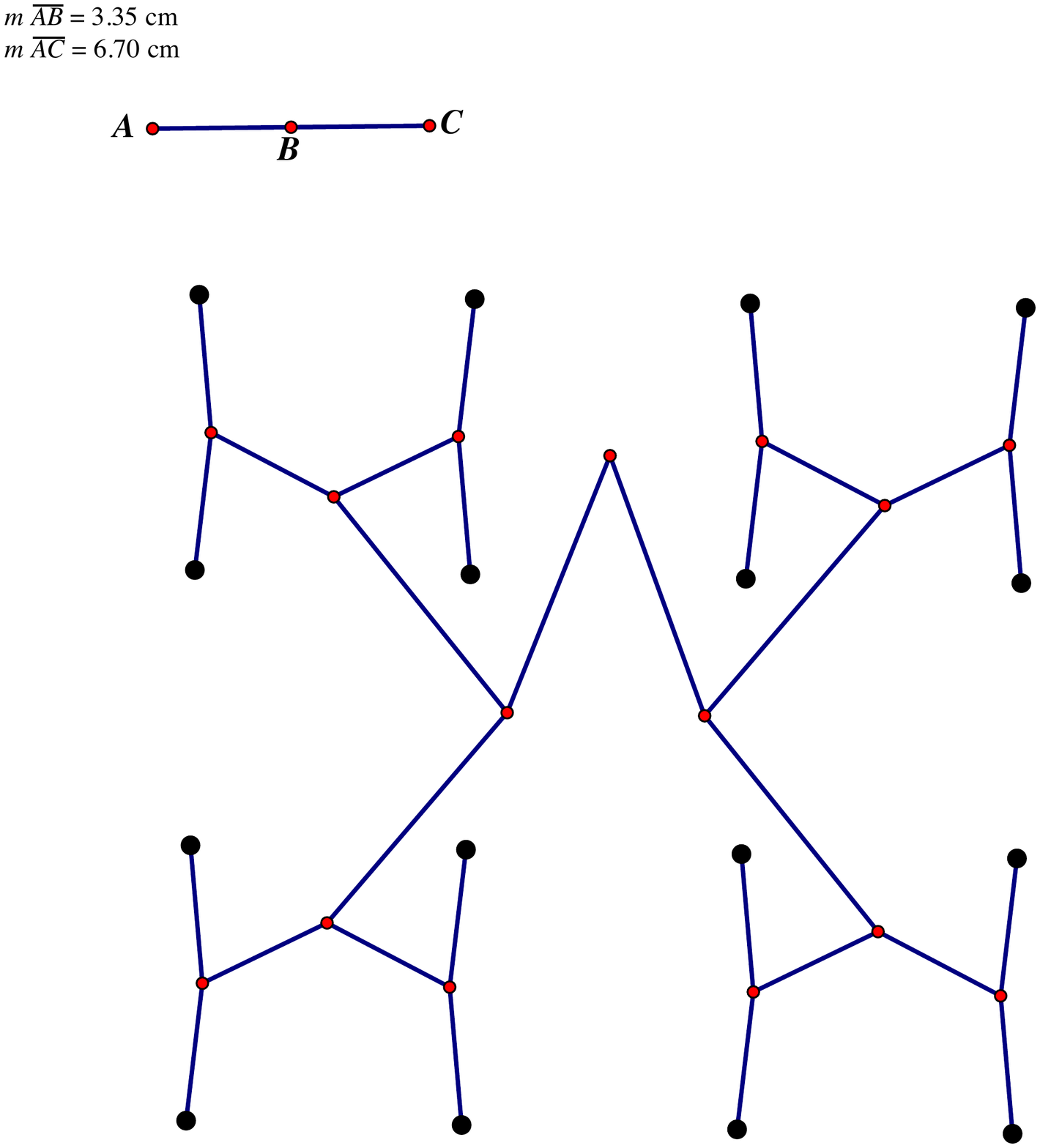}
  \captionof{figure}{This is the same framework as in Figure \ref{fig:iterated-rigid} but with the lengths of the bars increased by less than $0.5\%$.}
    \label{fig:iterated-rigid-2}
\end{minipage}
\end{figure}

%%%%%%%%%%%%%%%
\subsection{Certificates}\label{subsect:certificate}
%%%%%%%%%%%%%%%

The lack of conclusive algorithms for universal rigidity brings
us, finally, to the topic of ``sufficient certificates'' for universal rigidity.
In this paper we show that 
there is a kind of sufficient certificate that must exist 
for any universally rigid framework. This certificate is described by
a sequence of real-valued matrices and
can be 
verified efficiently using real computation.

Note,
that we do not claim that given a universally rigid framework, this
certificate can always be found efficiently.  
But, as we describe below in Section~\ref{sect:examples},
there are many cases where 
we can systematically find the appropriate certificates for the universal
rigidity of $(G,\p)$. 
We also discuss other cases 
where we have at least 
a positive probability of finding the certificate.

Looking again at the situation of
Figure~\ref{fig:iterated-rigid} 
and Figure~\ref{fig:iterated-rigid-2}, we see that universal rigidity
itself
can be a fragile property, that is destroyed 
(along with its sufficient certificates)
by any errors in the
description of $\p$. Given our new characterization of universal rigidity, 
we suggest that when exploring and designing frameworks that we wish to
be universally rigid, it may be best to explicitly maintain the
appropriate certificates as part of the representation and description
of $(G,\p)$.

%%%%%%%%%%%%%%%%%
\section{Stress}\label{sect:stress}
%%%%%%%%%%%%%%%%%

The central tool we will use to analyze universal rigidity is the concept 
of a stress.
\begin{definition}A \emph{stress}
associated to a graph $G$  is a scalar $\omega_{ij}=\omega_{ji}$
assigned to each edge $\{i,j\}$ of $G$.  Call the vector ${\bf
  \omega} = (\dots, \omega_{ij}, \dots)$, the \emph{stress} vector. 
\end{definition}

We can
suppress the role of $G$ here by simply requiring that $\omega_{ij}=0$
for any non-edge $\{i,j\}$ of $G$.  (One should also be careful not
to confuse the notion of stress here with that used in structure
analysis, in physics or in engineering.  There, stress is defined as a
force per cross-sectional area.  In the set-up here, there are no
cross-sections; the scalar $\omega_{ij}$ is better interpreted as a
force per unit length.)  

Since we will be concerned with configurations in an arbitrarily high
dimension, we will fix a large dimension $D$, which can effectively be
taken to be $n$ if our framework has $n$ vertices.  When we are given
a particular configuration, we generally will assume it is realized in
$\R^D$.  We can describe a configuration $\p$ in $\R^D$ using
coordinates using a single vector in $\R^{Dn}$.  Of course, for the
purposes of deciding universal rigidity and some of the other concepts
defined here, there is no reason to restrict the configurations to lie
some particular Euclidean space $\R^D$.  But it is clear that once the
ambient dimension $D$ is greater than $n$, any configuration in any
higher dimension is congruent to one in $\R^D$, and it will be
convenient to consider configurations in dimensions larger than $n$.
In order to define a finite dimensional space of configurations
appropriate for universal rigidity, though, it is useful to restrict
just to those configurations in $\R^D$, and if a construction pops out
of $\R^D$, we can always rotate it back in to $\R^D$.

Given a stress, we can measure the energy of a configuration:
Let $\omega=(\dots, \omega_{ij}, \dots)$ be a stress for a graph $G$ and let
$\pn$ be a configuration in $\R^D$. 
%%%%%%%
\begin{definition}We define the \emph{stress-energy}
associated to $\omega$ as
\begin{equation}\label{eqn:stress-energy}
E_{\omega}(\p):=\sum_{i<j} \omega_{ij} (\p_i-\p_j)^2,
\end{equation}
where the product of vectors is the ordinary dot product, and the
square of a vector is the square of its Euclidean length. 
\end{definition}
%%%%%%%%%
Regarding
the stress $\omega$ as fixed constants, $E_{\omega}$ is a quadratic
form defined on vectors in $\R^{Dn}$, 
it is
easy to calculate that the configuration $\p$ is a critical point for
$E_{\omega}$ when, for each vertex $i$ of $G$,

\begin{equation}\label{eqn:equilibrium}
\sum_j \omega_{ij} (\p_j-\p_i) = 0.
\end{equation}

%%%%%%%
\begin{definition}When Equation (\ref{eqn:equilibrium}) holds, we say that the stress $\omega$ is an \emph{equilibrium stress for the configuration} $\p$.
We also say that $\p$ is in \emph{equilibrium with respect to $\omega$}.
\end{definition}
%%%%%%%

It is useful to represent a stress in matrix form:
The $n$-by-$n$ \emph{stress matrix} $\Omega$ associated to the stress
$\omega$ is defined by making the $\{i,j\}$ entry of $\Omega$ be
$-\omega_{ij}$ when $i \ne j$, and the diagonal entries of $\Omega$
are such that the row and column sums of $\Omega$ are zero.

It is easy to see that with respect to the standard basis of $\R^{Dn}$,
the matrix of $E_{\omega}$ is $\Omega \otimes I^D$, where $I^D$ is the
$D$-by-$D$ identity matrix and $\otimes$ is the matrix Kronecker
product.  Note that although $E_{\omega}$ is defined over the high
dimensional space $\R^{nD}$, its being PSD only depends only on
$\Omega$, and its rank only depends on the rank of $\Omega$ and $D$.

If $\p$ is a configuration in $\R^d$
with an equilibrium stress $\omega$,  it
is easy to check that for any affine map of $a: \R^d \rightarrow
\R^D$, the configuration $a(\p)$ defined by $\p_i \rightarrow
a(\p_i)$, for all $i$, is also an equilibrium configuration with
respect to $\omega$.  

%%%%%
\begin{definition}\label{def:conic-infinity}
We say that a configuration $\p$ is
\emph{universal with respect to the stress $\omega$} if $\p$ is in
equilibrium with respect to $\omega$, and any other configuration $\q$
in $\R^D$ which is, also, in equilibrium with respect to $\omega$, is
such that $\q$ is an affine image of $\p$.
\end{definition}
%%%%%

%%%%%
\begin{definition}
For a configuration $\pn$ we regard each $\p_i$ as a column vector
in $\R^D$, as we define the $D$-by-$n$ \emph{configuration matrix of $\p$} as 
\[
P = 
\begin{bmatrix}
\p_1 & \p_2 & \dots & \p_n
\end{bmatrix}.
\]
\end{definition}
%%%%%%%

Then it is easy to check that the equilibrium condition for a given stress is 
\[
P\,\Omega  = 0,
\]
where $\Omega$ is the stress matrix for the stress $\omega$.  
%Starting with an $\Omega$, a basis
%of $n$-vectors for the co-kernel of $\Omega$ form  
%the rows of a matrix $B$,
%which we call  a \emph{basis matrix} for the co-kernel of 
%$\Omega$.
%The  columns of such a basis matrix, 
%describe  a configuration $\b$ of $n$ points in $\R^D$.

The following is easy to check and is in \cite{Connelly-energy}.
See also Lemma \ref{lem:metric-universality} for a general universal construction.

\begin{proposition}\label{prop:universal-configuration}  
Given a stress $\omega$, let $\p$ be any configuration that is in
equilibrium with respect to $\omega$ and whose affine span is of
maximal dimension over all such configurations. Let this affine
span have dimension $d$.  Then $\p$ is universal with respect to
$\omega$ and the rank of $\Omega$ is $n-d-1$.
\end{proposition}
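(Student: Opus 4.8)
The plan is to work with the configuration matrix and exploit the algebraic identity $P\,\Omega = 0$, which says exactly that each row of $P$ lies in the kernel of $\Omega$ (equivalently, $\ker \Omega$ contains the row space of $P$). First I would set $r = \rank \Omega$, so $\dim \ker \Omega = n - r$. The all-ones vector $\mathbf{1} \in \R^n$ is always in $\ker \Omega$ because the row sums of $\Omega$ are zero by construction. For any configuration $\q$ in equilibrium with respect to $\omega$, the $D$ rows of its configuration matrix $Q$ lie in $\ker \Omega$; together with $\mathbf{1}$, these span a subspace of $\ker \Omega$ of dimension at most $n - r$. I would observe that the dimension of this span, minus one, is precisely the dimension of the affine span of $\q$ (adjoining $\mathbf{1}$ to the rows of $Q$ corresponds to passing to homogeneous coordinates, so the rank of the augmented $(D{+}1)$-by-$n$ matrix is one more than the affine dimension). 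Hence every equilibrium configuration has affine span of dimension at most $n - r - 1$.

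Next I would show this bound is achieved and characterizes universality. Pick any subspace $V \subseteq \ker \Omega$ of dimension exactly $n - r$ that contains $\mathbf{1}$ (possible since $\mathbf{1} \in \ker\Omega$ and $\dim\ker\Omega = n-r$), choose a basis $\mathbf{1}, v_1, \dots, v_{n-r-1}$ of $V$, and let the rows of a configuration matrix be $v_1, \dots, v_{n-r-1}$ (padded with zero rows to have $D$ rows). This configuration $\p^\star$ is in equilibrium, and its affine span has dimension $n - r - 1$, which by the previous paragraph is maximal. So a configuration of maximal affine span exists and its dimension $d$ satisfies $d = n - r - 1$, i.e.\ $\rank\Omega = n - d - 1$, giving the rank claim.

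For universality: let $\p$ be any equilibrium configuration whose affine span has the maximal dimension $d = n - r - 1$. Then the augmented matrix $\begin{bmatrix} \mathbf{1}^{\mathsf T} \\ P \end{bmatrix}$ has rank $d + 1 = n - r = \dim\ker\Omega$, and its row space lies in $\ker\Omega$; hence its row space equals $\ker\Omega$. Now if $\q$ is any other equilibrium configuration, each row of $Q$ and the vector $\mathbf{1}$ lie in $\ker\Omega$, hence in the row space of $\begin{bmatrix} \mathbf{1}^{\mathsf T} \\ P \end{bmatrix}$; that is, each coordinate function of $\q$ (and the constant function) is a linear combination of the coordinate functions of $\p$ and the constant function. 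This is exactly the statement that $\q = a(\p)$ for an affine map $a : \R^d \to \R^D$ (after restricting to the affine span of $\p$, on which the coordinate functions of $\p$ together with $1$ form an affine coordinate system). Therefore $\p$ is universal with respect to $\omega$.

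The only mildly delicate point — and the place I would be most careful — is the bookkeeping translating ``row space of the augmented matrix'' statements into ``affine image'' statements: one must check that linear dependencies among $\{\mathbf 1, \text{rows of }P\}$ on the nose correspond to affine relations among the points $\p_i$, and in particular that when the rows of $P$ need not be affinely independent one still recovers an honest affine map (not merely a linear one) and that it genuinely maps into $\R^D$. This is routine linear algebra via homogeneous coordinates, but it is the step where the ``$-1$'' in $n - d - 1$ really comes from, so it deserves an explicit sentence rather than being waved away. Everything else is immediate from $P\Omega = 0$, the zero-row-sum property of $\Omega$, and the rank–nullity theorem.
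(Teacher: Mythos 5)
Your proof is correct. It is worth noting, though, that the paper never writes out a proof of this proposition at all: it is attributed to the reference on energy functions and the reader is pointed to Lemma \ref{lem:metric-universality}, whose proof handles the universality claim by a different device. There, given $\p$ of maximal affine span and any other $\q$ in the same affine set, one forms the ``doubled'' configuration $\tilde{\p}_i=(\p_i,\q_i)$, observes that it still satisfies the defining affine equations, and uses the fact that projection onto the first factor is an affine isomorphism on the span of $\tilde{\p}$ (because $\p$ is already maximal) to produce the affine map $\p\mapsto\tilde{\p}\mapsto\q$. That argument is coordinate-free and applies to an arbitrary affine set $\A$, of which the equilibrium set of a single stress is just one instance; but it says nothing about $\rank\Omega$, which the paper defers to the cited reference. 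Your argument instead works entirely inside $\ker\Omega$ with homogeneous coordinates: it is specific to the stress-matrix situation, but it delivers the identity $\rank\Omega=n-d-1$ as an immediate byproduct of rank--nullity, and it exhibits explicitly a maximal configuration by choosing a basis of $\ker\Omega$ containing $\mathbf{1}$ (this uses $D\ge n$, which the paper assumes). The one point you flag as delicate --- that containment of the rows of $Q$ and $\mathbf{1}$ in the row space of the augmented matrix of $\p$ is literally the statement $Q=AP+\mathbf{b}\mathbf{1}^{t}$, i.e.\ $\q_i=A\p_i+\mathbf{b}$ --- is indeed the only bookkeeping required, and your treatment of it is adequate. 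In short: correct, a genuinely different and more computational route, self-contained where the paper outsources, at the cost of not generalizing to arbitrary affine sets the way Lemma \ref{lem:metric-universality} does.
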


%%%%%%%%%%%%%%%%%%%
\section{The conic at infinity}\label{sect:conic}
%%%%%%%%%%%%%%%%%%%

In a sense, an equilibrium stress can only make distinctions ``up to
affine motions" as seen in Proposition
\ref{prop:universal-configuration}.  For rigidity questions, we would
like to know when the affine motions can be restricted to Euclidean
congruences.  

%%%%%%%
\begin{definition} We say that $\v= \{\v_1, \dots, \v_m\}$, a finite
collection of non-zero vectors in $\R^d$, lie on a \emph{conic at
  infinity} if when regarded as points in real projective $(d-1)$
space $\RP^{d-1}$, they lie on a conic. 
\end{definition}
%%%%%%%

  This means that there is a
non-zero $d$-by-$d$ symmetric matrix $A$ such that for all $i = 1,
\dots, m$, $\v_i^tA\v_i=0$, where $()^t$ is the transpose operation.
The following shows how affine motions can be non-trivial flexes of a
framework.
%%%%%%  
\begin{definition}A \emph{flex} of a framework $(G,\p)$ is a continuous
motion $\p(s)$, $0 \le s \le 1, \p(0)=\p$, where $\p(s)$ is equivalent
to $\p$.  It is non-trivial if $\p(s)$ is not congruent to $\p$ for
all $s>0$.  If $\p(s)=A(s)\p(0)$, where $A(s)$  is an affine function of Euclidean space, then we say $\p(s)$ is an  \emph{affine flex}. 
\end{definition}
%%%%%%
%%%%%%%%%%
\begin{proposition}\label{prop:affine-flex}  
A framework $(G,\p)$ in $\R^d$, with $d$-dimensional affine span, has a non-trivial affine flex if and
only if it has an equivalent non-congruent affine image in $\R^d$ if
and only if the member directions $\{\p_i-\p_j\}_{\{i,j\}\in E(G)}$
lie on a conic at infinity, where $E(G)$ are the edges of $G$.
\end{proposition}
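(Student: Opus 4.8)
The plan is to prove a chain of three equivalences by proving two separate biconditionals. The second equivalence — ``$(G,\p)$ has a non-trivial affine flex if and only if it has an equivalent non-congruent affine image in $\R^d$'' — is essentially a continuity/connectedness argument in one direction and is trivial in the other. If there is a non-trivial affine flex $\p(s)=A(s)\p$, then for small $s>0$ the framework $\p(s)$ is equivalent to $\p$ but not congruent, giving the affine image directly. Conversely, if $\q=A\p$ is an equivalent, non-congruent affine image, I would connect $A$ to the identity through affine maps: writing $A$ in block form relative to the affine span, the linear part is $\mathrm{GL}$-conjugate-like to something one can path-connect to $I$ within the affine maps that preserve the relevant edge lengths, or more simply interpolate the quadratic form. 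The cleanest route here is to pass immediately to the conic condition (the third clause) and build the flex from the matrix $A$ there, so I would not dwell on this middle clause in isolation.

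The heart of the matter is the equivalence with the conic-at-infinity condition, and for that I would compute directly. Let $a:\R^d\to\R^d$ be affine, $a(x)=Lx+t$ with $L$ a $d\times d$ matrix. Then for an edge $\{i,j\}$,
\[
|a(\p_i)-a(\p_j)|^2-|\p_i-\p_j|^2 = (\p_i-\p_j)^t(L^tL-I)(\p_i-\p_j).
\]
So $a(\p)$ is equivalent to $\p$ exactly when the symmetric matrix $A:=L^tL-I$ satisfies $(\p_i-\p_j)^tA(\p_i-\p_j)=0$ for every edge, i.e.\ the member directions lie on the conic defined by $A$ (when $A\neq 0$). Moreover $a(\p)$ is congruent to $\p$ iff $L$ can be taken orthogonal (using that $\p$ has full $d$-dimensional affine span, so $L$ is determined by its action on the $\p_i-\p_j$ and a congruence of $a(\p)$ back to $\p$ forces $L^tL=I$), i.e.\ iff $A=0$. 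This already yields: a non-congruent equivalent affine image exists iff there is a nonzero symmetric $A$ with $L^tL=I+A$ positive definite (so that $L$ exists) and $(\p_i-\p_j)^tA(\p_i-\p_j)=0$ for all edges. The remaining point is that the positive-definiteness of $I+A$ is not really a restriction: given \emph{any} nonzero symmetric $A$ annihilating all member directions, $I+\varepsilon A$ is positive definite for small $\varepsilon>0$ and still annihilates them, so we may rescale. This gives the equivalence with the bare conic condition.

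Finally, to produce an honest \emph{flex} rather than just one alternative configuration, I would take such an $A$ with $I+A\succ0$, set $A(s) := $ (any choice of square root) $L_s$ with $L_s^tL_s = I+sA$ for $s\in[0,1]$ — e.g.\ $L_s=(I+sA)^{1/2}$, which depends continuously (indeed analytically) on $s$ since $I+sA\succ0$ throughout — and define $\p(s)=L_s\p$. Each $\p(s)$ is equivalent to $\p$ because $(\p_i-\p_j)^t(L_s^tL_s-I)(\p_i-\p_j)=s\,(\p_i-\p_j)^tA(\p_i-\p_j)=0$, and $\p(s)$ is non-congruent to $\p$ for $s>0$ because $L_s^tL_s-I=sA\neq0$. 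I would also remark, for completeness, that one may need to handle a rigid-motion ambiguity (translations, and an overall rotation) but these are absorbed by working modulo congruence throughout.

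The main obstacle I anticipate is the bookkeeping around ``non-congruent,'' specifically the claim that an equivalent affine image $a(\p)$ is congruent to $\p$ precisely when the linear part can be chosen orthogonal. This uses the full-dimensionality of the affine span of $\p$ in an essential way — otherwise the linear part of $a$ is only determined on a proper subspace and the statement fails — so I would state that hypothesis carefully and verify that a congruence $\Phi$ with $\Phi(a(\p_i))=\p_i$ for all $i$ forces the linear part of $\Phi\circ a$ to be the identity on $\spn\{\p_i-\p_j\}=\R^d$, hence $\Phi\circ a$ is a translation and $L^tL=I$. Everything else is the direct computation above plus the elementary perturbation $A\mapsto\varepsilon A$.
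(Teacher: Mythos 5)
Your proof is correct and follows essentially the same standard argument that the paper defers to its references for: identifying the conic matrix with $L^tL-I$, using the full affine span to show congruence forces $L^tL=I$, and recovering the flex via $L_s=(I+sA)^{1/2}$ after scaling $A$ so that $I+A$ is positive definite.
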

%%%%%%%%%%%
See \cite{Connelly-energy, Shaping-space} for a simple proof of this property.
Note that in the plane, the conic lies in $\RP^1$, which consists of
two points or one point.  So affine motions of a framework can only
occur when the edge directions lie in two possible directions.

%%%%%%%%%%%%%%%%%%%
\section{The fundamental theorem}\label{sect:fundamental}
%%%%%%%%%%%%%%%%%%%

The major tool used for  proving universal rigidity is the following.  (See \cite{Connelly-energy}.)

\begin{theorem} \label{thm:fundamental}  Let $(G,\p)$ be a framework whose affine span of $\p$ is all of $\R^d$, with an equilibrium stress $\omega$ and stress matrix $\Omega$.  Suppose further
\begin{enumerate}
\item  $\Omega$ is positive semi-definite (PSD).
\item  The configuration $\p$ is universal with respect to the stress $\omega$.  (In other words, the rank of $\Omega$ is $n - d - 1$.)
\item  The member directions of $(G,\p)$ do not lie on a conic at infinity.
\end{enumerate}
Then $(G,\p)$ is universally rigid.
\end{theorem}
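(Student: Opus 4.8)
The plan is to show that any configuration $\q$ in any $\R^D \supset \R^d$ equivalent to $(G,\p)$ must be congruent to $\p$. The key observation is that the stress-energy $E_\omega$ is simultaneously a ``metric'' quantity and a PSD quadratic form, so equivalence of frameworks forces $\q$ to be an equilibrium configuration for $\omega$ of \emph{minimal} energy, and PSD-ness pins down its geometry.

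First I would record the algebraic identity $E_\omega(\p) = \sum_{i<j}\omega_{ij}|\p_i-\p_j|^2$ depends only on the edge lengths $|\p_i-\p_j|$ for $\{i,j\}\in E(G)$, since $\omega_{ij}=0$ off the edge set. Hence if $(G,\q)$ is equivalent to $(G,\p)$ then $E_\omega(\q)=E_\omega(\p)$. Next, since $\omega$ is an equilibrium stress for $\p$, the configuration $\p$ is a critical point of $E_\omega$, and because $\Omega$ (equivalently $E_\omega = \Omega\otimes I^D$) is PSD, $\p$ is in fact a global \emph{minimizer} of $E_\omega$ over all configurations in $\R^D$, with minimum value equal to $E_\omega(\p)$. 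Therefore $\q$ also globally minimizes $E_\omega$, so $\q$ lies in the kernel-coset: writing $Q$ for the configuration matrix of $\q$, we get $Q\Omega = 0$, i.e.\ $\q$ is itself in equilibrium with respect to $\omega$. (One should be slightly careful: $\q$ a priori lives in $\R^D$ with $D>d$, but that is exactly the ambient dimension in which universality was formulated, so this is fine; if $\q$ came from an even larger space, rotate it into $\R^D$ first.)

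Now invoke hypothesis (2): $\p$ is universal with respect to $\omega$, so every equilibrium configuration — in particular $\q$ — is an affine image of $\p$, say $Q = AP + \mathbf{t}\mathbf{1}^t$ for some linear map $A\colon\R^d\to\R^D$ (after translating so $\p$ is centered, and using that $\p$ has full $d$-dimensional affine span to make $A$ well-defined on all of $\R^d$). The final step is to upgrade ``affine image'' to ``congruence.'' For each edge $\{i,j\}$, equivalence gives $|A(\p_i-\p_j)|^2 = |\p_i-\p_j|^2$, i.e.\ $(\p_i-\p_j)^t(A^tA - I)(\p_i-\p_j) = 0$. Thus the member directions $\{\p_i-\p_j\}$ all satisfy the quadratic form defined by the symmetric $d$-by-$d$ matrix $A^tA - I$. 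By hypothesis (3) the member directions do not lie on a conic at infinity, which forces $A^tA - I = 0$, i.e.\ $A$ is an isometric embedding of $\R^d$ into $\R^D$; combined with the translation this exhibits a congruence from $\p$ to $\q$. Hence $(G,\p)$ is universally rigid.

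The main obstacle — and the step deserving the most care — is the claim that PSD-ness of $\Omega$ makes $\p$ a \emph{global} minimizer of $E_\omega$ and, conversely, that equality $E_\omega(\q)=E_\omega(\p)$ forces $Q\Omega=0$ rather than merely $E_\omega(\q)$ being the same critical value. This is the place where one must use that a PSD quadratic form $x \mapsto x^t M x$ attains its minimum exactly on $\ker M$ and that an affine shift of the minimizer still lies in the minimizing set only if the shift is in the kernel; one also needs to confirm that $\min E_\omega$ over \emph{all} of $\R^{Dn}$ equals $E_\omega(\p)$, which follows because $E_\omega(\p) = \tr(P\Omega P^t)$ and $P\Omega=0$ gives value $0$, while PSD-ness gives $E_\omega \ge 0$ everywhere — so the common minimum value is $0$ and equivalence forces $E_\omega(\q)=0$, hence $Q\Omega=0$. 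Everything else is the routine linear algebra of affine maps and the direct appeal to Proposition~\ref{prop:affine-flex} / the conic-at-infinity definition for the last step.
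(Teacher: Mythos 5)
Your proof is correct and follows essentially the same route as the paper: equivalence forces $E_\omega(\q)=E_\omega(\p)=0$, PSD-ness then puts $\q$ in the radical so $Q\Omega=0$, universality makes $\q$ an affine image of $\p$, and the conic-at-infinity hypothesis upgrades the affine map to a congruence. Your explicit computation with $A^tA-I$ is just an unpacking of the appeal to Proposition~\ref{prop:affine-flex} that the paper makes at the same point.
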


The idea is that $E_\omega(\p)$ only depends on the edge lengths of $\p$,
and so any configuration $\q$ equivalent  to $\q$ must have zero energy.
Since $E_\omega$ is PSD, this forces such a $\q$ to have coordinates
in the kernel of $\Omega$ and thus $\q$ to be  an affine image of $\p$.
Thus by Proposition \ref{prop:affine-flex},
their member directions must lie on a conic
at infinity. So Condition \ref{condition:conic} implies that $(G,\p)$ is universally rigid.

%The idea is that the universal configuration $\p$ is the minimum of a
%quadratic energy form, and so the only possible configurations $\q$
%equivalent to $\p$ must have zero energy, and be in the linear kernel
%of the energy form.  This means that the $\q$ must be affine images of
%$\p$, and by Proposition , their
%member directions must lie on a conic at infinity.  So Condition
%\ref{condition:conic}.) implies that $(G,\p)$ is universally rigid.

%%%%%%
\begin{definition}  If all three conditions of Theorem \ref{thm:fundamental} are met we
say that the framework $(G,\p)$ is \emph{super stable}.  
\end{definition}
%%%%%%

There are many
instances of such frameworks. For example, the rigid tensegrities of
\cite{Connelly-Terrell} are super stable in $\R^3$, where the
number of edges of $G$ is $m =2n$, and $n$ is the number of vertices.
Theorem \ref{thm:fundamental} is the starting point for most of our results in this
paper, where this result will be generalized significantly.  

Given such a matrix $\Omega$ and $(G,\p)$ as real valued input, one can
efficiently verify (under, say a real-model of computation)
that $\Omega$ is PSD and that it is an equilibrium stress matrix for $(G,\p)$.

We note, in passing, the following result  in
\cite{Alfakih-Ye-general-position} which replaces the conic condition
with a more natural one.

%%%%%%%
\begin{definition}A configuration $\pn$ in $\R^d$
is in \emph{general position} if no $k$ points lie in a $(k-1)$-dimensional
affine space for $1 \le k \le d$.
\end{definition}
%%%%%%%

%%%%%%%
\begin{theorem}\label{Alfakih-general}
If Conditions \ref{condition:positive} and \ref{condition:rank}
hold in Theorem \ref{thm:fundamental} and Condition
\ref{condition:conic} is replaced by the assumption that the
configuration $\p$ is in general position, then Condition
\ref{condition:conic} still holds and $(G,\p)$ is super stable.
\end{theorem}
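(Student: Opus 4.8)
The plan is to obtain the conclusion from Theorem~\ref{thm:fundamental}: it suffices to show that a PSD equilibrium stress matrix $\Omega$ of rank $n-d-1$ together with general position forces the member directions of $(G,\p)$ \emph{not} to lie on a conic at infinity, after which all three hypotheses of Theorem~\ref{thm:fundamental} hold and $(G,\p)$ is super stable. I would assume $n\ge d+2$ (for $n=d+1$ one has $\rank\Omega=0$, $\Omega$ trivial, and the situation is degenerate). Two standing facts I will use: since $\p$ affinely spans $\R^d$ and, by the rank hypothesis and Proposition~\ref{prop:universal-configuration}, $\ker\Omega$ is $(d+1)$-dimensional while containing $\mathbf 1$ and every row of $P$, the kernel of $\Omega$ is exactly the space of affine functions of $\p$; and general position means no hyperplane of $\R^d$ contains more than $d$ of the $\p_i$, while any $\le d+1$ of the $\p_i$ are affinely independent.

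So I argue by contradiction: suppose there is a nonzero symmetric $d\times d$ matrix $A$ with $(\p_i-\p_j)^tA(\p_i-\p_j)=0$ on every edge. Put $g_i:=\p_i^tA\p_i$. First I would expand $\sum_j\omega_{ij}(\p_j-\p_i)^tA(\p_j-\p_i)$, which is $0$ since every nonzero term sits on an edge, and collapse it using the equilibrium identity $\sum_j\omega_{ij}\p_j=\Omega_{ii}\p_i$; a short manipulation (together with $\Omega_{ij}=-\omega_{ij}$ for $i\ne j$) turns this into $(\Omega g)_i=0$ for all $i$. Hence $g\in\ker\Omega$, so $g_i=v^t\p_i+c$ for some $v\in\R^d$, $c\in\R$: every vertex of $\p$ lies on the quadric $x^tAx-v^tx-c=0$. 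Feeding the quadric identity at $\p_i$ and $\p_j$ into the conic identity on the edge $\{i,j\}$ yields $\p_i^tA\p_j=\tfrac12 v^t(\p_i+\p_j)+c$ there. Thus the symmetric matrix $B$ defined by $B_{ij}:=\p_i^tA\p_j-\tfrac12 v^t(\p_i+\p_j)-c$ vanishes on the diagonal and on all edges of $G$, and the identity $B_{ij}=(A\p_j-\tfrac12 v)^t\p_i-(\tfrac12 v^t\p_j+c)$ exhibits each column of $B$ as an affine function of $\p$.

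Next, if $B=0$ then for each $i$ the map $\p_j\mapsto \p_i^tA\p_j$ is affine, and since the $\p_j$ affinely span $\R^d$ this forces $A\p_i=\tfrac12 v$ for all $i$, so $A$ annihilates every difference $\p_i-\p_j$ and hence $A=0$, a contradiction. So $B\ne 0$; choose a pair with $B_{i_0j_0}\ne 0$, necessarily a non-edge with $i_0\ne j_0$. The affine function $x\mapsto (A\p_{j_0}-\tfrac12 v)^tx-(\tfrac12 v^t\p_{j_0}+c)$ is then not identically zero, and its linear part must be nonzero (otherwise $B_{j_0j_0}=0$ would kill the constant term too), so its zero set is a genuine hyperplane $H$. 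But $B_{ij_0}=0$ for $i=j_0$ and for every $i\in N_G(j_0)$, so $\{\p_{j_0}\}\cup\{\p_i:i\in N_G(j_0)\}\subseteq H$; general position caps this set at $d$ points, so $\deg_G(j_0)\le d-1$, and makes those $\le d$ points affinely independent. Hence $\{\p_i-\p_{j_0}:i\in N_G(j_0)\}$ is linearly independent, and the equilibrium relation $\sum_{i\in N_G(j_0)}\omega_{j_0 i}(\p_i-\p_{j_0})=0$ forces every $\omega_{j_0 i}=0$; so row and column $j_0$ of $\Omega$ are zero.

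Finally I would delete vertex $j_0$: the principal submatrix $\Omega'$ is PSD, is the stress matrix of the $(n-1)$-point configuration $\p'$ for the (unchanged-support) restricted stress, and has $\rank\Omega'=\rank\Omega=n-d-1$ since only a zero row and column were removed. But $\p'$ still has $n-1\ge d+1$ points in general position, hence still affinely spans $\R^d$, so its stress matrix has rank at most $(n-1)-d-1=n-d-2$ (cf.\ Proposition~\ref{prop:universal-configuration}), contradicting $\rank\Omega'=n-d-1$. This shows the member directions are not on a conic at infinity, and then Proposition~\ref{prop:affine-flex} and Theorem~\ref{thm:fundamental} give super stability. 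I expect the delicate step to be the middle one: converting the purely projective conic hypothesis into the affine/combinatorial statement ``all vertices lie on a quadric'' (via $g\in\ker\Omega$) and extracting the auxiliary matrix $B$ supported on the non-edges; once $B\ne 0$ produces a low-degree vertex, the remainder is a routine rank count, and the $B=0$ case together with rank-invariance under deleting a zero row and column are minor bookkeeping.
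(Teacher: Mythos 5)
The paper does not prove Theorem~\ref{Alfakih-general} at all --- it is quoted from the Alfakih--Ye paper \cite{Alfakih-Ye-general-position} --- so there is no in-text proof to compare against; judged on its own, your argument is correct and gives a self-contained proof in the stress-matrix language of this paper, whereas the cited proof is usually run through Gale matrices. I checked the steps: the identity $(\Omega g)_i=0$ with $g_i=\p_i^tA\p_i$ does follow from expanding $\sum_j\omega_{ij}(\p_j-\p_i)^tA(\p_j-\p_i)=0$ and the equilibrium relation $\sum_j\omega_{ij}\p_j=\Omega_{ii}\p_i$; since $\rank\Omega=n-d-1$ and $\p$ affinely spans $\R^d$, the kernel of $\Omega$ is exactly the affine functions of $\p$, so $g$ is affine and the auxiliary symmetric matrix $B$ is well defined, vanishes on the diagonal and on all edges, and has affine columns; the dichotomy ($B=0$ forces $A=0$; $B\ne 0$ produces a vertex $j_0$ whose closed neighborhood lies on a genuine hyperplane, hence by general position has at most $d$ affinely independent points, killing the stress at $j_0$) is sound; and the final rank count after deleting the zero row and column (or, equivalently, noting that $e_{j_0}\in\ker\Omega$ cannot be an affine function of $\p$ when the remaining $n-1\ge d+1$ points still span) yields the contradiction $n-d-1\le n-d-2$. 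Your explicit restriction to $n\ge d+2$ is not just bookkeeping: as stated the theorem is false for $n=d+1$ with $G$ not complete (e.g.\ a two-edge path on three non-collinear points has the zero stress of rank $0=n-d-1$, is in general position, yet its two member directions do lie on a conic at infinity), so flagging that hypothesis is appropriate; for $n\ge d+2$ the rank hypothesis forces $\Omega\ne 0$ and your argument goes through. What your route buys is that it never leaves the framework/stress formalism of this paper and isolates exactly where general position enters (bounding how many points of the closed neighborhood of a vertex can sit on the hyperplane cut out by the affine column of $B$); what it costs is the somewhat ad hoc introduction of $B$, which is essentially the off-diagonal discrepancy of the quadric through the $\p_i$ and plays the role that the nonsingular-submatrix property of the Gale matrix plays in the original proof.
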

%%%%%%%%

This natural question is whether the conditions of Theorem
\ref{thm:fundamental} are necessary for universal rigidity.
The answer \emph{in the generic case} is in the affirmative.
The following is from
\cite{Gortler-Thurston2}:
\begin{theorem}\label{thm:Gortler-Thurston2}
A universally rigid framework $(G, \p)$, with $\p$  generic in $E^d$ and having $n \ge d+2$ vertices, 
has 
a PSD equilibrium stress matrix with rank $n -d- 1$.
\end{theorem}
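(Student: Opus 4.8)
The plan is to prove Theorem~\ref{thm:Gortler-Thurston2} by combining the generic global/dimensional rigidity theory of Gortler–Healy–Thurston with the structure of the measurement-equivalence variety at a generic point. First I would recall the key facts that make genericity powerful: for a generic configuration $\p$ in $\R^d$ with $n \geq d+2$, the rank of the rigidity matrix $R(\p)$ is a constant $r$ depending only on $G$ (and $d$), and universal rigidity implies in particular that $\p$ cannot be flexed in any $\R^D$. The point is to upgrade ``cannot be flexed in high dimensions'' into the existence of a PSD stress matrix of the correct corank.

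\medskip

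The approach I would take is the following. Consider the set of all configurations $\q$ in $\R^D$ (with $D = n-1$, say) whose edge lengths agree with those of $\p$; this is a real algebraic set $V$. Universal rigidity says $V$ consists of a single congruence class, i.e. $\q$ ranges over the orbit of $\p$ under the Euclidean group $\mathrm{O}(D) \ltimes \R^D$. Now I would pass to the semidefinite-programming / convex-geometry viewpoint that the rest of the paper uses: the Gram matrices $G_\q = P_\q^t P_\q$ of such equivalent configurations form a convex set (a spectrahedron) cut out by the edge-length equations together with PSD-ness and a centering normalization, and universal rigidity forces this spectrahedron to be the single point $G_\p$. A point of a spectrahedron that is not exposed in the expected way — concretely, an extreme point whose face is not the whole cone — must be ``certified'' by a dual slack: there is a symmetric matrix $\Omega$ supported on the non-edges-complement pattern (i.e. an equilibrium stress matrix) that is PSD and whose rank complements the rank of $G_\p$. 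This is exactly facial reduction applied once, and the genericity is what guarantees that one step suffices: in the generic case the face of the feasibility cone containing $G_\p$ is already minimal, so a single PSD stress with $\rank \Omega = n-d-1$ appears. The corank statement $\rank \Omega = n-d-1$ then follows from Proposition~\ref{prop:universal-configuration} together with the fact that, generically, $\p$ itself is universal with respect to this $\Omega$ (its affine span, dimension $d$, is maximal among equilibrium configurations — otherwise one could flex out into a higher dimension, contradicting universal rigidity).

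\medskip

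More concretely, the key steps in order would be: (1) reduce to Gram matrices and set up the affine-linear space $L$ of symmetric $n \times n$ matrices whose edge-indexed entries match $\langle \p_i,\p_j\rangle$ up to the affine freedom, intersected with the PSD cone $\mathcal{S}^n_+$; (2) observe that universal rigidity is equivalent to $L \cap \mathcal{S}^n_+$ meeting exactly the rank-$d$ (or lower) stratum and forcing equality with a congruence class, so that $G_\p$ lies in the relative interior of a proper face $F$ of $\mathcal{S}^n_+$; (3) invoke the conic duality / facial reduction lemma to produce a nonzero $\Omega \in F^\perp \cap \mathcal{S}^n_+$, and note $\Omega$ orthogonal to the edge-pattern span means precisely $\Omega$ is an equilibrium stress matrix, i.e. $P_\p \Omega = 0$; (4) use genericity to show the face $F$ has rank exactly $d+1$ (so $\rank \Omega \leq n-d-1$) and that $\Omega$ can be chosen with $\rank \Omega = n-d-1$, using that $\p$ is generic and hence has no ``extra'' affine degeneracies — this is where one leans on the Gortler–Healy–Thurston machinery relating generic stress kernels to affine spans. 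Step (4) — pinning the rank of $\Omega$ to the exact value $n-d-1$ rather than something smaller, for a \emph{generic} $\p$ — is the main obstacle; it requires showing that a generic universally rigid framework cannot be ``stuck'' at an intermediate facial-reduction stage, equivalently that generically the single PSD stress already certifies universality of $\p$ itself. I expect this to need a genericity/algebraic-independence argument showing that if the maximal-span equilibrium configuration had dimension $d' > d$ one could realize an equivalent non-congruent framework, contradicting universal rigidity, while the maximal span cannot be $< d$ since $\p$ itself is an equilibrium configuration of span $d$.
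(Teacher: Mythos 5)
The paper does not actually prove this statement; it is quoted verbatim from \cite{Gortler-Thurston2}, so there is no internal proof to compare against. Your proposal therefore has to stand on its own, and it does correctly set up the standard reduction (Gram matrices, the spectrahedron $L\cap \mathcal{S}^n_+$, facial reduction, and the identification of $F^\perp$-certificates with equilibrium stress matrices). You also correctly identify where the real content lies: step (4), pinning the rank of a \emph{globally} PSD stress to exactly $n-d-1$ for generic $\p$.

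But the argument you sketch for that step does not work, and the gap is essential rather than cosmetic. You propose to argue that ``if the maximal-span equilibrium configuration had dimension $d'>d$ one could realize an equivalent non-congruent framework, contradicting universal rigidity.'' This implication is false: a configuration in the kernel of $E_\omega$ (equivalently, an affine image of the universal configuration for $\Omega$) need not have the same edge lengths as $\p$, so a large kernel of $\Omega$ produces no equivalent framework and no contradiction. Indeed, Figure~\ref{fig:general-position} of this paper and Theorem~\ref{thm:Alfakih-stress} show exactly this phenomenon: there exist universally rigid frameworks (even in general position) whose entire space of PSD equilibrium stresses has rank far below $n-d-1$, and whose universal rigidity is certified only by an \emph{iterated} sequence of stresses, each PSD only on the previous affine set. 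Relatedly, your step (3) silently asks for $\Omega\in F^\perp\cap\mathcal{S}^n_+$, i.e.\ a certificate PSD on all of $\mathcal{S}^n$; one round of facial reduction only guarantees a certificate PSD on the span of the current face. So the theorem is precisely the claim that genericity collapses the whole facial-reduction tower to a single globally PSD step, and nothing in your outline uses the algebraic independence of the coordinates of $\p$ to force that collapse. The actual proof in \cite{Gortler-Thurston2} requires a genuinely different mechanism (an inductive analysis showing that the iterated affine sets inherit an appropriate genericity, so that the successive stresses can be merged), and without some such argument the proposal reduces the theorem to an unproved claim that is false for non-generic configurations.
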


This result does not hold for non-generic frameworks (even in general position).
For example, see the universally rigid framework in Figure~\ref{fig:general-position}.
In this paper, we will describe a (weaker) sufficient condition that
is also necessary for universal rigidity for all frameworks.
\begin{figure}[here]
    \begin{center}
        \includegraphics[width=0.4\textwidth]{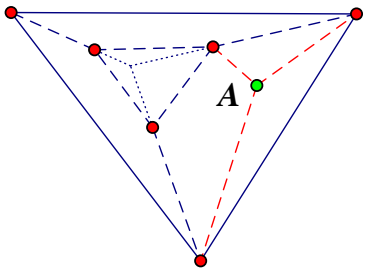}% 
        \end{center}
    \caption{This is a framework, where the vertices are all in
      general position, there is only a one-dimensional space of
      equilibrium stresses, and the associated stress matrix does not
      have maximal rank.  The stresses on the members at the vertex
      $A$ must be all zero. The dotted lines extending the members coming
      from the vertices of the outside triangle meet at a point and
      are not part of the framework, as shown.  As described in this paper,
we will use multiple levels of stresses.
In this figure and
      later ones, the first level stresses and the corresponding
      members are colored in dark blue, the next level in red and the
      third level in green.}
    \label{fig:general-position}
    \end{figure}

%%%%%%%%%%%%%%%%%%%
\section{Dimensional rigidity}\label{sect:dimensional-rigidity}
%%%%%%%%%%%%%%%%%%%

In \cite{Alfakih-dim-rigidity} a notion called dimensional rigidity
is introduced.
This is closely related to, but
distinct from, universal rigidity.
Our main result can be best understood in terms of dimensional
rigidity, first.  Then we can derive the appropriate statements about
universal rigidity.

%%%%%%
\begin{definition}  We say that a
framework $(G,\p)$ in $\R^d$, with affine span of dimension $d$, is
\emph{dimensionally rigid}  in $\R^d$ if every framework $(G,\q)$ equivalent to
$(G,\p)$ has an affine span of dimension no greater than $d$.
\end{definition}
%%%%%%%

(One might better call this concept \emph{dimensionally maximal}, since a
dimensionally rigid framework may not even be locally rigid, but we
refrain from that indulgence.) 

In many applications, one often wants to find the {\bf
  minimum dimension} for a graph $(G, \e)$ with given edge lengths $
\e= \{\dots, e_{ij}, \dots\}$, so the concept of the maximal dimension
seems backwards from what is normally desired.  For example, finding
the minimal dimension of $(G, \e)$ is the point of
\cite{Belk-Connelly, Laurent-Gram}.  Nevertheless, dimensional rigidity is quite
relevant for universal rigidity.

It is clear that if a framework $(G,\p)$ is universally rigid in
$\R^d$, then it is dimensionally rigid in $\R^d$, but we shall see
several examples of non-rigid dimensionally rigid frameworks.  
Such cases always occur due to a conic at infinity, (in which case,
the framework is not even locally rigid).
For
example, two bars, with a single vertex in common, is dimensionally
rigid in the plane, but it is flexible, i.e. not rigid, in the plane.

An important connection between dimensional rigidity and universal
rigidity is the following.  (This is proved in \cite{Alfakih-dim-rigidity}, but we provide a more direct proof here.)
\begin{theorem}\label{thm:affineness} 
If a framework $(G,\p)$ with $n$ vertices in $\R^d$ is dimensionally
rigid in $\R^d$, and $(G,\q)$ is equivalent to $(G,\p)$, then $\q$ is
an affine image of $\p$.
\end{theorem}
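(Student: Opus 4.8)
The plan is to derive the conclusion from the *existence of a universal configuration* guaranteed by Proposition \ref{prop:universal-configuration}, together with the hypothesis of dimensional rigidity. Since $(G,\q)$ is equivalent to $(G,\p)$, every edge length is preserved, so for *any* equilibrium stress $\omega$ of $\p$ we have $E_\omega(\q)=E_\omega(\p)$; in particular, if $\omega$ happens to be an equilibrium stress for $\p$, then $E_\omega(\q)$ equals the critical value $E_\omega(\p)$. The key idea is to pick the *right* stress: I would like to produce an equilibrium stress $\omega$ of $\p$ whose stress matrix $\Omega$ is PSD and has rank exactly $n-d-1$, because then $\p$ is universal with respect to $\omega$ (Proposition \ref{prop:universal-configuration}), and any $\q$ in equilibrium with respect to $\omega$ is forced to be an affine image of $\p$. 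Concretely, if $\Omega\succeq 0$ then $E_\omega(\q)\ge 0$ with equality iff the columns of the configuration matrix $Q$ lie in $\ker\Omega$, i.e. $Q\Omega=0$, which (combined with $P\Omega=0$ and $\mathrm{rank}\,\Omega = n-d-1$) says $Q = AP$ for some affine map $A$.

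First I would explain why dimensional rigidity supplies such a stress. Consider the set of all equilibrium stresses of $\p$ whose stress matrices are PSD — this is a closed convex cone $\mathcal{K}$ (an intersection of the linear space of equilibrium stresses with the PSD cone). For each $\Omega\in\mathcal{K}$, $\ker\Omega \supseteq \mathrm{rowspace}(P)$ (augmented by the all-ones vector), so $\mathrm{rank}\,\Omega \le n-d-1$. The claim is that dimensional rigidity forces some $\Omega\in\mathcal{K}$ to attain rank $n-d-1$. The argument here is by contradiction: if every PSD equilibrium stress matrix had rank strictly less than $n-d-1$, then — taking a stress matrix $\Omega$ of maximal rank $r<n-d-1$ in $\mathcal{K}$ — the common kernel of all of $\mathcal{K}$ would be a subspace $W$ of dimension $\ge d+2$ (containing the $(d+1)$-dimensional span of $P$ and $\mathbf 1$ properly). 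One would then build a configuration $\q$ with affine span of dimension $d+1$ whose columns lie in $W$ and which still satisfies all the edge-length constraints, contradicting dimensional rigidity. Producing this $\q$ is the crux: I would perturb $\p$ within $W$ in a direction orthogonal (under the inner product induced by a maximal-rank $\Omega$) to the kernel — a "stress-preserving flex" that lifts into the extra dimension — and check that to first order it preserves edge lengths, then correct it, exploiting that every PSD stress in $\mathcal{K}$ annihilates the motion so no obstruction to zero energy appears.

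With a maximal-rank PSD equilibrium stress matrix $\Omega$ in hand (rank $n-d-1$), the finish is short. Any equivalent $(G,\q)$ satisfies $E_\Omega(\q)=E_\Omega(\p)=0$ since $\omega$ is an equilibrium stress, hence a critical point, and $E_\Omega(\p) = \tr(P\Omega P^t) = 0$ because $P\Omega = 0$. Since $E_\Omega$ has matrix $\Omega\otimes I^D \succeq 0$, the value $0$ is the minimum, so $\q$ also lies at the minimum; equivalently $Q\Omega = 0$. Thus the columns of $Q$ lie in $\ker\Omega$, which is the $(d+1)$-dimensional space spanned by the columns of $P$ together with $\mathbf 1$ (here I use Proposition \ref{prop:universal-configuration}, which identifies this kernel precisely). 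Writing each $\q_i$ as an affine combination coming from this kernel gives $Q = AP + \mathbf c\,\mathbf 1^t$ for a linear map $A$ and translation $\mathbf c$, i.e. $\q$ is an affine image of $\p$, as desired.

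The main obstacle I anticipate is the middle step — showing dimensional rigidity *forces* a maximal-rank PSD equilibrium stress matrix to exist. This is essentially the one nontrivial content of the theorem; it is where facial reduction (alluded to in the introduction) is doing its work, and where I would expect the authors' proof to be most careful. The rest — the equivalence $\Rightarrow$ zero energy $\Rightarrow$ kernel membership $\Rightarrow$ affine image — is routine linear algebra once the right $\Omega$ is available.
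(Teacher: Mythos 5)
Your argument breaks at exactly the step you flagged as the crux, and it is not merely a gap: the claim that dimensional rigidity forces the existence of a PSD equilibrium stress matrix of rank $n-d-1$ is false. That claim is the converse of Theorem \ref{thm:Alfakih}, and the paper explicitly notes (via the examples of Figure \ref{fig:general-position} and the right-hand framework of Figure \ref{fig:tensegrity-examples}) that a PSD stress matrix of rank $n-d-1$ is \emph{not} necessary for dimensional rigidity, nor even for universal rigidity. In Figure \ref{fig:general-position} the space of equilibrium stresses is one-dimensional and its stress matrix does not have maximal rank, yet the framework is universally rigid; the maximal-rank conclusion holds only generically (Theorem \ref{thm:Gortler-Thurston2}). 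The only unconditional guarantee is Theorem \ref{thm:Alfakih-stress}: a dimensionally rigid framework has \emph{some} nonzero PSD equilibrium stress, possibly of rank as low as $1$. Repairing your route would require the full iterated stress machinery of Theorem \ref{thm:main} --- but that machinery is developed after, and partly by means of, Theorem \ref{thm:affineness} (see Lemma \ref{lem:faceInAffine}), so the approach is also in danger of circularity.

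The intended proof is far more elementary and uses no stresses at all. Given the correspondence $h:\p\rightarrow\q$, form the configuration $\frac{1}{\sqrt 2}\Gamma(h)$ whose $i$th point is $\frac{1}{\sqrt 2}(\p_i,\q_i)\in\R^d\times\R^D$. For each edge $\{i,j\}$ its squared length is $\frac{1}{2}\bigl((\p_i-\p_j)^2+(\q_i-\q_j)^2\bigr)=(\p_i-\p_j)^2$, so this configuration is equivalent to $(G,\p)$. Dimensional rigidity forces its affine span to have dimension at most $d$; since projection onto the first factor gives $\frac{1}{\sqrt 2}\p$, whose affine span is already $d$-dimensional, that projection restricts to an affine isomorphism on the span of $\frac{1}{\sqrt 2}\Gamma(h)$. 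Composing its inverse with the projection onto the second factor (and rescaling) exhibits $\q$ as an affine image of $\p$. Your closing paragraph (zero energy $\Rightarrow$ kernel membership $\Rightarrow$ affine image) is correct linear algebra, but the stress it requires is simply not available here.
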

\begin{proof} 
Suppose that $h: \p \rightarrow \q$ is the correspondence between the
configurations.  Consider the graph of this correspondence $\Gamma(h)
= \{ (\p_i, \q_i)\}_{i=1,\dots, n} \subset \R^d \times \R^D$, where
$D$ is sufficiently large to contain $\q$.  It is easy to check (See
\cite{Bezdek-Connelly-two-distance} 
or  the proof of 
Lemma \ref{lem:metric-universality} below) 
that
$\frac{1}{\sqrt{2}}\Gamma(h)$ is equivalent to $\p$ and $\q$.  
Thus
there is a $d$-dimensional affine hyperplane that contains
$\frac{1}{\sqrt{2}}\Gamma(h)$.  This implies that $\q$ is an affine
image of $\p$. $\Box$
\end{proof}

A key consequence of Theorem \ref{thm:affineness} shows that universal rigidity can be determined from dimensional rigidity and Property \ref{condition:conic}.) of Theorem \ref{thm:fundamental}.

 \begin{corollary}  A framework $(G,\p)$ with $n$ vertices in $\R^d$ is universally rigid if and only if it is dimensionally rigid and the edge directions do not lie on a conic at infinity. 

\end{corollary}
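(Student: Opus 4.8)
The plan is to prove the two implications separately, leaning on Theorem~\ref{thm:affineness} and Proposition~\ref{prop:affine-flex}.

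For the forward direction, suppose $(G,\p)$ is universally rigid. First I would observe that universal rigidity trivially implies dimensional rigidity: any equivalent $(G,\q)$ must be congruent to $\p$, hence has the same affine span dimension $d$, so certainly no equivalent framework can have larger affine span. For the conic condition, I argue by contrapositive: if the member directions $\{\p_i-\p_j\}_{\{i,j\}\in E(G)}$ did lie on a conic at infinity, then by Proposition~\ref{prop:affine-flex} the framework would have an equivalent non-congruent affine image of $\p$ in $\R^d$ (even a non-trivial affine flex), contradicting universal rigidity. Hence both conditions hold.

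For the reverse direction, assume $(G,\p)$ is dimensionally rigid in $\R^d$ and the edge directions do not lie on a conic at infinity. Let $(G,\q)$ be any framework equivalent to $(G,\p)$, realized in some $\R^D\supseteq\R^d$. By Theorem~\ref{thm:affineness}, $\q$ is an affine image of $\p$, say $\q = a(\p)$ for an affine map $a$. Since the affine span of $\p$ is $d$-dimensional, $a$ restricted to that span is determined by a linear map, and I want to conclude $a$ is in fact a congruence. Here I invoke Proposition~\ref{prop:affine-flex} again: if $a$ were not a congruence, then $(G,\p)$ would have an equivalent non-congruent affine image, which by the proposition forces the member directions onto a conic at infinity --- contradicting our hypothesis. (One should note that Proposition~\ref{prop:affine-flex} as stated concerns affine images \emph{in $\R^d$}; since $a(\p)$ has affine span of dimension at most $d$ by dimensional rigidity, we may regard it as living in $\R^d$ after an isometry, so the proposition applies.) Therefore $a$ is a congruence, $\q$ is congruent to $\p$, and since $\q$ was an arbitrary equivalent framework in arbitrary dimension, $(G,\p)$ is universally rigid.

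The main subtlety --- the step I would be most careful about --- is the compatibility of dimensions between Theorem~\ref{thm:affineness} (which allows $\q$ in high-dimensional $\R^D$) and Proposition~\ref{prop:affine-flex} (which is phrased for affine images in $\R^d$). The resolution is exactly that dimensional rigidity guarantees the affine image $a(\p)$ has span of dimension $\le d$, so it can be rotated back into $\R^d$ without changing any distances, making the proposition applicable. A secondary point worth stating cleanly is that an affine map between two configurations with full $d$-dimensional span is a congruence if and only if it preserves the edge lengths and no conic-at-infinity obstruction is present --- this is precisely the content we are borrowing from Proposition~\ref{prop:affine-flex}, so no new argument is needed.
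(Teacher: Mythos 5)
Your proof is correct and follows exactly the route the paper intends: the paper states this corollary as an immediate consequence of Theorem~\ref{thm:affineness} combined with Proposition~\ref{prop:affine-flex}, which is precisely your two-step argument (affine image via dimensional rigidity, then congruence via the conic-at-infinity criterion). Your care about rotating the affine image $a(\p)$ back into $\R^d$ before applying Proposition~\ref{prop:affine-flex} is a legitimate point the paper leaves implicit, and you resolve it correctly.
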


 One result that follows from the proof of Theorem \ref{thm:fundamental} from \cite{Connelly-energy} is the following.

\begin{theorem}\label{thm:Alfakih} If a framework $(G,\p)$ with $n$ vertices in $\R^d$ has an equilibrium stress with a PSD stress matrix of rank $n-d-1$, then $(G,\p)$ is dimensionally rigid in $\R^d$.
\end{theorem}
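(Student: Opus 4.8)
The plan is to mimic the proof of Theorem~\ref{thm:fundamental} but to stop short of invoking the conic condition: what is left after the PSD/energy argument is precisely a bound on the dimension of the affine span of any equivalent framework.

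First I would set up the linear algebra. Let $P$ be the configuration matrix of $\p$, so the equilibrium hypothesis reads $P\Omega=0$, and since the row sums of $\Omega$ vanish we also have $\mathbf 1^t\Omega=0$. Because the affine span of $\p$ is all of $\R^d$, the $(d+1)$-by-$n$ matrix obtained by stacking $\mathbf 1^t$ on top of $P$ has rank $d+1$, so its rows furnish $d+1$ linearly independent vectors lying in $\ker\Omega$. By hypothesis $\rank\Omega=n-d-1$, hence $\dim\ker\Omega=d+1$, and therefore $\ker\Omega=\spn\{\mathbf 1\}+(\text{row space of }P)$. (Equivalently, this is Proposition~\ref{prop:universal-configuration} read off from the rank count: $\p$ is universal with respect to $\omega$.)

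Next I would transport the stress-energy. A direct computation gives $E_\omega(\p)=\tr(P\,\Omega\,P^t)$, which vanishes since $P\Omega=0$. Now let $(G,\q)$ be equivalent to $(G,\p)$, with configuration matrix $Q$ in some $\R^D$. Since $\omega_{ij}=0$ on non-edges and the squared lengths $(\q_i-\q_j)^2=(\p_i-\p_j)^2$ agree on edges, $E_\omega$ depends only on the edge lengths, so $E_\omega(\q)=E_\omega(\p)=0$, i.e.\ $\tr(Q\,\Omega\,Q^t)=0$. Writing this trace as the sum over the rows $Q_k$ of $Q$ of the terms $Q_k\,\Omega\,Q_k^t$, each term is nonnegative because $\Omega$ is PSD, so each term vanishes; PSD-ness then forces $\Omega\,Q_k^t=0$ for every $k$. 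Hence $Q\Omega=0$ and every row of $Q$ lies in $\ker\Omega$.

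Finally I would conclude. By the preceding steps every row of $Q$ lies in $\spn\{\mathbf 1\}+(\text{row space of }P)$, so $Q=BP+c\,\mathbf 1^t$ for some $D$-by-$d$ matrix $B$ and some $c\in\R^D$; equivalently $\q_i=B\p_i+c$ for all $i$, so $\q$ is an affine image of $\p$. An affine image of a configuration with $d$-dimensional affine span has affine span of dimension at most $d$, so $(G,\q)$ has affine span of dimension at most $d$. As $(G,\q)$ was arbitrary, $(G,\p)$ is dimensionally rigid in $\R^d$. I do not expect a serious obstacle here; the only point needing care is the identification of $\ker\Omega$, where the exact rank hypothesis (not merely $\rank\Omega\le n-d-1$) and the full $d$-dimensionality of the affine span are both used to guarantee that the $d+1$ candidate kernel vectors are independent and span. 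The remainder is the standard ``zero PSD energy kills the kernel'' argument already used for Theorem~\ref{thm:fundamental}.
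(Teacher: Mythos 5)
Your proof is correct and follows essentially the same route as the paper, which disposes of this theorem by remarking that the rank hypothesis makes $\p$ universal with respect to $\omega$ (Proposition~\ref{prop:universal-configuration}) and then invoking the zero-energy/PSD argument from Theorem~\ref{thm:fundamental} to force any equivalent $\q$ into $\ker\Omega$ and hence to be an affine image of $\p$. You have simply written out in full the kernel identification and the trace computation that the paper leaves implicit.
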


See \cite{Alfakih-dim-rigidity} for similar conditions for dimensional
rigidity.  This just says that the configuration $\p$ is universal
with respect to the given stress.  The only other possible equivalent
configurations of $(G,\p)$, in this case, are affine linear images,
which do not raise dimension.  

Since universal rigidity implies dimensional rigidity, 
the examples of Figures \ref{fig:tensegrity-examples} (on the right) and \ref{fig:general-position} also show that 
the PSD stress matrix of rank $n-d-1$ is not necessary for
dimensional rigidity.

In order to start to understand what \emph{is} necessary
for dimensional (and universal) rigidity we begin with
the following, Theorem 6 in
\cite{Alfakih-bar-frameworks}.
We also provide a simple proof as a special case of the results is Section
\ref{sect:Affine} here.

\begin{theorem}\label{thm:Alfakih-stress} 
If $(G,\p)$ is a dimensionally rigid framework with $n$ vertices whose
affine span is d dimensional, $d \le n-2$, then it has a non-zero equilibrium
stress with a PSD stress matrix $\Omega$.
\end{theorem}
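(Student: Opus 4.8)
The plan is to recast dimensional rigidity as a statement about the positive semidefinite cone and then extract the desired stress as a separating hyperplane. After translating so that $\p$ has its centroid at the origin (harmless, since edge lengths, affine span, and equilibrium stresses are all translation invariant), write $P$ for the matrix with columns $\p_1,\dots,\p_n$, so $P\mathbf 1 = 0$, and let $X_0 = P^tP$ be the $n$-by-$n$ Gram matrix of $\p$. Then $X_0$ is PSD, lies in the subspace $\mathcal{S}_0 := \{X \in \mathcal{S}^n : X\mathbf 1 = 0\}$ of symmetric matrices, and $\rank X_0 = d$; conversely every PSD matrix in $\mathcal{S}_0$ is the Gram matrix of a configuration with affine span dimension equal to its rank. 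Since $|\p_i - \p_j|^2 = X_{ii} + X_{jj} - 2X_{ij}$ is linear in $X$, the Gram matrices of frameworks equivalent to $(G,\p)$ form $\mathcal{F} = \mathcal{A} \cap \mathcal{S}^n_+$, where $\mathcal{A} = X_0 + L$ is an affine subspace of $\mathcal{S}_0$ and $L$ is the linear space of symmetric $Y$ with $Y\mathbf 1 = 0$ and $(e_i - e_j)^t Y (e_i - e_j) = 0$ for every edge $\{i,j\}$.

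Dimensional rigidity then implies that $\mathcal{A}$ is disjoint from the relative interior of the cone $K := \mathcal{S}^n_+ \cap \mathcal{S}_0$ taken inside $\mathcal{S}_0$: that relative interior is the set of PSD matrices in $\mathcal{S}_0$ of maximal rank $n - 1$, and a matrix of $\mathcal{A}\cap\mathcal{S}^n_+$ of rank $n-1$ would give a framework equivalent to $(G,\p)$ with affine span of dimension $n-1 > d$, contradicting dimensional rigidity; this is the only place the hypothesis $d \le n-2$ is used. I would then properly separate the affine set $\mathcal{A}$ from the (relatively) open convex set $\ri K$: there is a nonzero $W \in \mathcal{S}_0$ with $\langle W, \cdot \rangle$ bounded above on $\mathcal{A}$, hence constant, say $c$, on the affine space $\mathcal{A}$, and with $\langle W, Z \rangle \ge c$ for all $Z \in K$. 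Because $K$ is a cone this forces $c \le 0$ and $\langle W, Z\rangle \ge 0$ for all $Z \in K$, i.e. $W$ restricted to $\mathbf 1^\perp$ is PSD; and $\langle W, X_0 \rangle = c \le 0$ together with $X_0 \in K$ gives $c = 0$ and $\langle W, X_0\rangle = 0$. Constancy on $\mathcal{A} = X_0 + L$ also gives $W \perp L$, i.e. $W \in L^\perp$.

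The step I expect to be the crux is seeing that this separator $W$ is in fact a genuine equilibrium stress matrix, not merely some element of $L^\perp$. For that I would establish the linear-algebra identity $L^\perp \cap \mathcal{S}_0 = \spn\{(e_i - e_j)(e_i - e_j)^t : \{i,j\} \in E(G)\}$, whose right side is exactly the space of stress matrices: $L^\perp$ is that span together with $\{\mathbf 1 v^t + v \mathbf 1^t : v \in \R^n\}$, and imposing $W\mathbf 1 = 0$ annihilates the second summand. Hence $W = \Omega$ is the stress matrix of a nonzero stress $\omega$; because $\Omega \mathbf 1 = 0$, the earlier conclusion that $\Omega$ is PSD on $\mathbf 1^\perp$ upgrades to $\Omega \succeq 0$. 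Finally $\langle \Omega, X_0 \rangle = \tr(P \Omega P^t) = 0$ with $P\Omega P^t \succeq 0$ forces $P\Omega P^t = 0$ and hence $P\Omega = 0$; that is, $\omega$ is an equilibrium stress for $\p$ with nonzero PSD stress matrix, as claimed. The only real obstacle throughout is the bookkeeping around the all-ones direction: choosing the ambient subspace $\mathcal{S}_0$ is what makes the separation valid and simultaneously forces the separator to be an honest stress matrix, and the remaining ingredients are the standard convex separation theorem and the fact that a PSD matrix of trace zero vanishes.
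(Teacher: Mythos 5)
Your proof is correct, but it runs ``upstairs'' in the Gram-matrix picture, whereas the paper argues ``downstairs'' in the measurement cone. The paper obtains this theorem as the special case $\A=\Cal$ of Lemma \ref{lem:stress-boundary}: by Lemma \ref{lem:metric-universality} the image $f(\Cal)=\CalM\subset\R^m$ of the squared-length map is a convex cone whose relative interior consists of images of configurations with affine span of dimension $n-1$, so dimensional rigidity with $d\le n-2$ forces $f(\p)$ onto the relative boundary; a supporting hyperplane of $\CalM$ through $f(\p)$ and the origin then has a normal vector $\omega\in\R^m$ that \emph{is} the stress, and $\omega\cdot f\ge 0$ on all configurations is exactly the statement that $\Omega$ is PSD, while $\omega\cdot f(\p)=0$ gives equilibrium. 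You instead separate the affine set $X_0+L$ of Gram matrices of equivalent frameworks from the relative interior of $\mathcal{S}^n_+\cap\{X\mathbf{1}=0\}$, and then must do the extra linear-algebra step identifying $L^\perp\cap\{W\mathbf{1}=0\}$ with $\spn\{(e_i-e_j)(e_i-e_j)^t:\{i,j\}\in E(G)\}$ to recognize the separator as an honest stress matrix. The trade-off is real: the paper must first prove convexity of $\CalM$ (the $(\cos\theta\,\p,\sin\theta\,\q)$ construction) and characterize its relative interior, but then the separating functional is a stress with no further bookkeeping; your route gets convexity for free and makes both the PSD-ness of $\Omega$ and the implication $\tr(P\Omega P^t)=0\Rightarrow P\Omega=0$ completely transparent, at the cost of the dual-space computation around the all-ones direction. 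The two arguments are dual pictures of the same facial-reduction step, precisely the correspondence the paper spells out in Section \ref{sect:facial}, and each correctly isolates where $d\le n-2$ enters (ruling out an equivalent realization of maximal rank $n-1$).
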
 

Note that the rank of $\Omega$ in Theorem \ref{thm:Alfakih-stress}
could be as low as one.
As such, it is weaker than the sufficient conditions above.
Later, we will describe a new condition, which is 
stronger than having a non-zero PSD stress matrix, but weaker
than having a non-zero PSD stress matrix of rank $n-d-1$. Our condition
instead
will be of the form of a sequence of PSD matrices, where the
combined rank is $n-d-1$.  Briefly, we will apply
Theorem \ref{thm:Alfakih-stress} repeatedly to a smaller and smaller
space of possible configurations.

%%%%%%%%%%%%%%%%%%%
\section{The measurement set}\label{sect:measurement}
%%%%%%%%%%%%%%%%%%%

Fix a finite graph $G$ with $n$ vertices, $m$ edges and fix a
Euclidean space $\R^D$, where the dimension $D$ is at least as large
as $n$. Let 
\[
\Cal:=\{\p \mid \pn \text{ is a configuration
  in } \R^D \}
  \]
be the set of configurations in $\R^D$. 
Each configuration can be regarded as a vector in $\R^{Dn}$. 

%%%%%%
\begin{definition}We define the \emph{rigidity map} as
\begin{equation*}
f: \Cal =\R^{nD}  \rightarrow \CalM \subset \R^m
\end{equation*} 
by $f(\p) = (\dots, (\p_i -\p_j)^2, \dots)$, where the $\{i,j\}$ are
the corresponding edges in $G$, and $\CalM = \CalM(G)$ is the image of
$f$ in $\R^m$ for the graph $G$, which we call the \emph{measurement
  set}.  
\end{definition}
%%%%%%
  
  In other words, $\CalM$ is the set of squared lengths of
edges of a framework that are actually achievable in some Euclidean
space.    There are some basic properties of $\Cal$ and any affine set $\CalA$ as below.

\begin{enumerate}
\item \label{cond:cone} $\CalM$ is a closed convex cone in $\R^m$.
\item \label{cond:equiv} For any $\e \in \CalM$, 
$f^{-1}(\e)$ consists of an equivalence class 
of frameworks $\p \in \Cal$.
\end{enumerate}
The convexity of Condition \ref{cond:cone} is well-known and even
has an explicit formula for the convexity in
\cite{Bezdek-Connelly-two-distance} and follows from Lemma \ref{lem:metric-universality} in Section \ref{sect:Affine}.  
Condition \ref{cond:equiv}
 follows directly from the definition.

%%%%%
\begin{definition}The \emph{rigidity matrix}  is defined as $R(\p)=\frac{1}{2}df_{\p}$, with respect to the standard basis in Euclidean space, and
$f(\p)=R(\p)\p$, where $df$ is the differential of $f$.  Then the energy function associated to a stress $\omega$ can also be written as 
\[E_{\omega}(\p) = \omega R(\p)\p,\]
where $\omega$ is regarded as a row vector.
\end{definition}
%%%%%%

%%%%%%%%%%%%%%%%%%%
\section{Affine sets}\label{sect:Affine}
%%%%%%%%%%%%%%%%%%%

\begin{definition} A subset $\CalA \subset \Cal$ that is
the finite intersection of sets of the form
\begin{equation}\label{eqn:affine}
\{\p \in \Cal \mid \sum_{ij}\lambda_{ij}( \p_i -\p_j)=0 \}, 
\end{equation}
for some set $\{ \dots, \lambda_{ij}, \dots\}$ of constants, is called
an \emph{affine set}.
\end{definition}
Clearly
an affine set is a linear subspace of the configuration
space $\Cal$ and it is closed under arbitrary affine transformations
acting on $\R^D$. Moreover, any  
such set can be defined by equations of the form
(\ref{eqn:affine}).

%%%%%

For example, if there are three collinear points $\p_1, \p_2, \p_3$,
and $\p_2$ is the midpoint of $\p_1$ and $\p_3$, then $\{\p \in \Cal
\mid (\p_1-\p_2)-(\p_3-\p_2)=0\}$ is an affine set.  Or $\{\p \in \Cal
\mid \p_1-\p_2+ \p_3-\p_4=0\}$, which is a configuration of four
points of a parallelogram (possibly degenerate), is another example.

A special case of such an affine set is determined by a stress
$\omega$, where the equilibrium condition (\ref{eqn:equilibrium}) at
each vertex supplies the condition (\ref{eqn:affine}).

In Definition \ref{def:conic-infinity} we defined what it means for a configuration $\p$ to be universal with respect to a single stress $\omega$.  This just means that any other configuration $\q$ that is in equilibrium with respect to $\omega$ is an affine image of $\p$.  We generalize this the case to that of any affine set as follows.  

%%%%
\begin{definition} We say that a configuration $\p$ in an affine set $\A$ is \emph{universal with respect to $\A$}, if any other configuration $\q$ in $\A$ is an affine image of $\p$.
We denote by $\mathring{\A} \subset \A$,  the set of configurations
that are universal with respect to $\A$.
\end{definition}
%%%%

For any set $X$
in a linear space, $\langle X \rangle$ denotes the affine linear span
of $X$.

\begin{lemma}\label{lem:metric-universality} 
A configuration $\p \in \Cal$ is universal with respect to an affine set $\A$ if and only if it has maximal dimensional affine span for configurations in $\A$. Let $f:\A \rightarrow  \R^m$ be the restriction of the rigidity map to the measurement space for some graph $G$.  Then $f(\A)$ is convex and $f(\mathring{\A})$ is the relative interior of  
$f(\A) \subset \langle f(\A) \rangle$.  \end{lemma}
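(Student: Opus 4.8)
The plan is to prove the lemma in three stages, following the structure of the statement. First I would establish the purely metric claim that ``$\p$ is universal with respect to $\A$'' is equivalent to ``$\p$ has maximal-dimensional affine span among configurations in $\A$.'' The forward direction is immediate: if $\p$ is universal, then every $\q \in \A$ is an affine image of $\p$, hence $\dim\langle\q\rangle \le \dim\langle\p\rangle$. For the converse, suppose $\p$ has maximal affine span dimension, say $d$, and let $\q \in \A$ be arbitrary. The key trick (the same one used in the proof of Theorem \ref{thm:affineness}) is to form the ``averaged'' configuration $\r := \frac{1}{\sqrt 2}(\p \oplus \q)$, i.e. $\r_i = \frac{1}{\sqrt 2}(\p_i, \q_i) \in \R^D \times \R^D$. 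Since $\A$ is defined by equations of the form $\sum_{ij}\lambda_{ij}(\x_i - \x_j) = 0$, which are linear and coordinate-wise, both $\p$ and $\q$ in $\A$ force $\r \in \A$ as well (an affine set is closed under this ``Pythagorean'' combination because each defining equation holds in each block of coordinates separately). Now $\dim\langle\r\rangle \ge \dim\langle\p\rangle = d$ since $\p$ is a coordinate projection of $\r$; by maximality $\dim\langle\r\rangle = d$, so $\langle\r\rangle$ is a $d$-dimensional affine subspace of $\R^{2D}$. Both $\p$ and $\q$ are affine-linear images of $\r$ (via the two coordinate projections composed with the inclusion of $\langle\r\rangle$), and since $\langle\r\rangle$ has the same dimension $d$ as $\langle\p\rangle$, the projection $\langle\r\rangle \to \langle\p\rangle$ is an affine isomorphism; composing its inverse with the projection to $\langle\q\rangle$ exhibits $\q$ as an affine image of $\p$. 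Hence $\p$ is universal.

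Second, I would prove that $f(\A)$ is convex. This follows from the fact, already invoked in Conditions \ref{cond:cone}--\ref{cond:equiv}, that $\CalM$ is convex, together with the observation that the convexity witness is itself realized inside $\A$: given $\p, \q \in \A$ and $t \in [0,1]$, the configuration $\r$ built from $\sqrt{t}\,\p$ and $\sqrt{1-t}\,\q$ (concatenated in orthogonal coordinate blocks) lies in $\A$ (again because $\A$'s defining equations are coordinate-wise linear and homogeneous in each block) and satisfies $f(\r) = t f(\p) + (1-t) f(\q)$, since $(\r_i - \r_j)^2 = t(\p_i-\p_j)^2 + (1-t)(\q_i-\q_j)^2$. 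So $f(\A)$ contains every convex combination of its points.

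Third, and this is the heart of the matter, I would identify $f(\mathring{\A})$ with the relative interior $\ri f(\A)$. Here I would use that $\langle f(\A)\rangle = f(\langle \A\rangle)$-translate structure is not quite what we want; rather, I would argue by a dimension-count and an openness argument. On one hand, if $\p \in \mathring{\A}$, then $\p$ realizes the maximal span dimension $d$, and I claim $f(\p) \in \ri f(\A)$: any $\q \in \A$ has $\q$ an affine image of $\p$, and one can ``push'' $\p$ slightly in the direction of any nearby target in $f(\A)$ while staying universal, because the universal configurations form an open dense subset of each affine set (maximality of rank is an open condition — the set where the Gram matrix of $\p$ relative to some affine frame drops rank is closed). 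Concretely, for $\e \in f(\A)$ close to $f(\p)$, pick a preimage $\q \in \A$; then $\q$ is an affine image of $\p$, so the segment from $\p$ to $\q$ (reparametrized as the $\sqrt{}$-combination above) stays in $\A$ and maps under $f$ to the segment from $f(\p)$ to $\e$, and for the combination weighted heavily toward $\p$ the span stays dimension $d$, hence stays in $\mathring{\A}$; this shows a neighborhood of $f(\p)$ in $f(\A)$ is hit by $f(\mathring\A)$, and combined with $f$ being ``affine-invariant'' on fibers one concludes $f(\p) \in \ri f(\A)$. Conversely, if $f(\p) \in \ri f(\A)$, I would show $\p \in \mathring\A$: if not, $\p$ lies in a proper affine subset $\A' \subsetneq \A$ (the configurations whose span is contained in $\langle\p\rangle$), and $f(\A')$ is then a proper face — or at least a lower-dimensional convex subset — of $f(\A)$ not meeting its relative interior, giving a contradiction. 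The main obstacle I anticipate is making this last openness/face argument fully rigorous: one must show that the set of configurations in $\A$ of non-maximal span maps into the relative boundary of $f(\A)$, which requires either a careful rank-stratification argument or an appeal to the fact that $f$ restricted to $\mathring\A$ is (up to the affine group action) a submersion onto a relatively open piece. I would handle this by first reducing to $\mathring\A \neq \emptyset$ (clear, since the maximum is attained), then using that affine images don't change $f$, so $f(\A) = f(\mathring\A \cup (\text{lower-dim strata}))$ and the lower strata are contained in finitely many $f(\A_k)$ with $\dim\langle f(\A_k)\rangle < \dim\langle f(\A)\rangle$, whence their union misses $\ri f(\A)$.
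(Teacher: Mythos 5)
Your first two steps (the equivalence of universality with maximal affine span via the configuration $\tilde\p_i=(\p_i,\q_i)$, and the convexity of $f(\A)$ via the orthogonal-blocks combination $(\sqrt{t}\,\p,\sqrt{1-t}\,\q)$) are correct and are essentially the paper's own argument. The problem is the third step, identifying $f(\mathring{\A})$ with $\ri f(\A)$, and there the proposal has two genuine gaps.

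For the inclusion $f(\mathring{\A})\subseteq \ri f(\A)$, your concrete argument only shows that points of the segment $[f(\p),\e]$ near $f(\p)$ lie in $f(\mathring{\A})$; that is a density statement about $f(\mathring{\A})$, not a proof that a relatively open neighborhood of $f(\p)$ in $\langle f(\A)\rangle$ is contained in $f(\A)$. You correctly anticipate that what is really needed is that $f$ restricted to $\mathring{\A}$ is a submersion onto its image, but you defer this rather than prove it. This is precisely the paper's key step: the rank of $df$ is constant on $\mathring{\A}$ (all universal configurations are nonsingular affine images of one another), so by the constant rank theorem $f$ is locally a projection there, making $f(\mathring{\A})$ relatively open; combined with convexity and density in $f(\A)$ this forces $f(\mathring{\A})=\ri f(\A)$, with no need to analyze preimages of interior points at all.

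For the reverse inclusion your strategy is to show that $f(\p)\in\ri f(\A)$ implies $\p\in\mathring{\A}$, via the claim that the non-maximal-span configurations map into finitely many lower-dimensional convex subsets missing $\ri f(\A)$. That intermediate claim is false. Take $\A=\Cal$ with $n=4$ and $G$ two disjoint edges $\{1,2\},\{3,4\}$: then $f(\A)=[0,\infty)^2$, a planar configuration with both edges of positive length maps into $\ri f(\A)=(0,\infty)^2$, yet its affine span has dimension $2<3$, so it is not universal. Since $f$ is far from injective, a relative-interior point can have both universal and non-universal preimages, and $f$ of the lower strata need not be lower-dimensional. Fortunately the lemma does not assert your stronger statement; the set equality $f(\mathring{\A})=\ri f(\A)$ follows already from the open-dense-convex argument above, so you should replace this direction entirely rather than try to repair the stratification.
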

\begin{proof}  
Clearly any possible universal configuration must have maximal affine span in order for it to map affine linearly onto any other configuration in $\A$.  Conversely, let $\p$ be any configuration with maximal dimensional affine span, say $d$, in $\A$, and let $\q$ be any other configuration in $\A$.  Define $\tilde{\p}$ to be another configuration where $\tilde{\p}_i=(\p_i,\q_i) \in \R^D \times \R^D$ for $i=1, \dots, n$.  The configuration $\tilde{\p}$ is also in $\A$ since all its coordinates satisfy the equations (\ref{eqn:affine}).  Since projection is an affine linear map and the affine span of $\p$ is maximal, namely $d$, the dimension of the affine span of $\tilde{\p}$ must also be $d$, and the projection between their spans must be an isomorphism.  So the map $\p \rightarrow \tilde{\p} \rightarrow \q$ provides the required affine map since projection onto the other coordinates is an affine map as well.

If $\p,\q \in \A$, then, regarding $\p$ and $\q$ as being in complementary spaces, 
\begin{equation}\label{eqn:metric}
f((\cos \theta) \p, (\sin \theta) \q)=(\cos \theta)^2 f(\p)+(\sin\theta)^2  f(\q), 
\end{equation}
for $0 \le \theta \le \pi/2$ is the segment connecting $f(\p)$ to $f(\q)$ 
is in $f(\A)$ showing that $f(\A)$ and $f(\mathring{\A})$ are convex.

The rank of $df_{\p}$ is constant for non-singular affine images of  $\p$ (see \cite{Connelly-Whiteley}, for example), which are in $\mathring{\A}$, the universal configurations. This implies that $f$ is locally a projection into $f(\A)$ at $\p$, which  implies that 
$f(\mathring{\A})$ is 
open in 
$\langle f(\A) \rangle$.   This, combined with 
its being dense in $f(\A)$, 
and its convexity
makes 
$f(\mathring{\A})$ equal to 
the relative interior of 
$f(\A)$. $\Box$
\end{proof}

The dimension of an affine set $\A$ is $\dim(\A)=D(d+1)$,
where $D$ is the dimension of the ambient space and $d$ is the
dimension of the affine span of a universal configuration $\p$ for
$\A$.

For any (symmetric) bilinear form $B$ for a vector space $V$, the
\emph{radical of $B$} is the set $\{\v \mid B(\v, \w) = 0\,\,
\text{for all}\,\, \w \in V\}$.  If $V$ is a finite dimensional vector
space and $B$ is given by a symmetric matrix, then the radical of $B$
is the kernel (or co-kernel) of that matrix.  We can interpret the
stress-energy $E_{\omega}$ as such a bilinear form.  
If  $B$ acting on  $V$ is PSD, then its zero set must
be equal to its radical.

\begin{lemma}\label{lem:stress-boundary} 
Let $\q \in \CalA \subset \R^{nD}$.  Then $f(\q)$ is in the boundary
of the relative interior of $f(\CalA) \subset \langle f(\CalA)
\rangle$ if and only if there is a non-zero stress $\omega$ for $(G,
\q)$ such that when
$E_{\omega}$ is restricted to $\CalA$, the resulting form is PSD
and has 
$f(\q)$ in its radical.
\end{lemma}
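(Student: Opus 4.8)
The plan is to prove Lemma~\ref{lem:stress-boundary} by relating membership of $f(\q)$ in the relative boundary of the convex set $f(\CalA)$ to the existence of a supporting hyperplane, and then translating that hyperplane into a stress via the observation that linear functionals on $\R^m$ pull back along $f$ to stress-energies. Concretely, write $C := f(\CalA)$, which is convex by Lemma~\ref{lem:metric-universality}, and work inside the ambient affine space $\langle C\rangle$. Note that $C$ is in fact a cone with apex at the origin (by the scaling $\theta$-identity (\ref{eqn:metric}) together with the fact that $\CalA$ is a linear subspace, so $t\p\in\CalA$ whenever $\p\in\CalA$, giving $f(t\p)=t^2 f(\p)$, hence all nonnegative multiples of points of $C$ lie in $C$). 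Recall also that for a row vector $\omega\in\R^m$ we have $\omega\cdot f(\q) = E_\omega(\q)$, since $f(\q)=R(\q)\q$ and $E_\omega(\q)=\omega R(\q)\q$; more importantly, for \emph{any} configuration $\r$, $\omega\cdot f(\r) = E_\omega(\r)$, so a linear functional on $\R^m$ restricted to $C$ is exactly the quadratic form $\r\mapsto E_\omega(\r)$ on $\CalA$, and the condition that $\omega$ be an \emph{equilibrium} stress for $(G,\q)$ is not automatic — rather it will come out as the condition for $f(\q)$ to be in the radical.

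For the ``only if'' direction: suppose $f(\q)$ is in the relative boundary of $C$ within $\langle C\rangle$. Since $C$ is a closed convex set (Condition~\ref{cond:cone}, or rather the convexity/closedness established for $f(\CalA)$) that is not all of $\langle C\rangle$, there is a hyperplane $H\subset\langle C\rangle$ through $f(\q)$ supporting $C$, i.e.\ a linear functional — call it $\ell$ — with $\ell\ge 0$ on $C$ and $\ell(f(\q))=0$. Extend $\ell$ to a linear functional on all of $\R^m$, represented by a row vector $\omega$; then $\omega\cdot f(\r)\ge 0$ for all $\r\in\CalA$, which says precisely that $E_\omega$ restricted to $\CalA$ is PSD, and $\omega\cdot f(\q)=E_\omega(\q)=0$. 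Since a PSD form vanishes exactly on its radical, $\q$ lies in the radical of $E_\omega|_{\CalA}$, which (writing $E_\omega$ via the stress matrix $\Omega\otimes I^D$, intersected with the linear constraints defining $\CalA$) gives the stated equilibrium-type condition; and $f(\q)$ being in the radical of the form is just the restatement $E_\omega(\q)=0$. One must check $\omega\neq 0$: if $\omega$ restricted to $\langle C\rangle$ were zero the supporting hyperplane would be improper, contradicting that $f(\q)$ is a genuine relative boundary point — so after possibly adjusting $\omega$ by something vanishing on $\langle C\rangle$ (which does not change $E_\omega|_{\CalA}$) we may take $\omega\neq 0$, or simply keep the original $\omega$ which is nonzero since its restriction is nonzero.

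For the ``if'' direction: given a non-zero stress $\omega$ with $E_\omega|_{\CalA}$ PSD and $f(\q)$ in its radical, the linear functional $\omega\cdot(\,\cdot\,)$ is $\ge 0$ on $C=f(\CalA)$ and vanishes at $f(\q)$, so $f(\q)$ lies on a supporting hyperplane of $C$; hence $f(\q)$ is not in the relative interior $\ri(C)=f(\mathring{\CalA})$. It remains to rule out that this functional is constant (identically zero) on $\langle C\rangle$, for otherwise every point, including relative-interior ones, would be a ``boundary'' point of a degenerate supporting hyperplane. This is where the hypothesis that $\omega\neq 0$ as a \emph{stress for $(G,\q)$} must be converted into nondegeneracy of the functional on $\langle f(\CalA)\rangle$; I expect this — showing a nonzero stress gives a genuinely nonconstant functional on the affine hull of the measurement cone, equivalently that $\mathring{\CalA}$ really does get separated off — to be the main obstacle, and it should be handled using Lemma~\ref{lem:metric-universality}: for a universal $\p\in\mathring{\CalA}$, $f(\p)\in\ri(C)$, and if $E_\omega$ restricted to $\CalA$ is PSD but not identically zero on a neighborhood of $\p$ in $\CalA$, then $E_\omega(\p)=\omega\cdot f(\p)>0$ strictly (a PSD form that is not identically zero is strictly positive on the relative interior of the domain, since the relative interior cannot be contained in the radical), so the functional is nonconstant on $\langle C\rangle$ and the supporting hyperplane is proper, placing $f(\q)$ genuinely in the relative boundary. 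Combining the two directions gives the claimed equivalence. \eop
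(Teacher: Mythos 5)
Your proof takes essentially the same route as the paper's: identify linear functionals on $\R^m$ with stress energies via $\omega\cdot f(\r)=E_\omega(\r)$, and translate relative-boundary membership of $f(\q)$ in the convex set $f(\CalA)$ into the existence of a supporting hyperplane, exactly as in the paper's appeal to convexity of $f(\CalA)$ from Lemma \ref{lem:metric-universality}. The one place you go beyond the paper is the degeneracy worry in the ``if'' direction: that a nonzero stress vector $\omega$ might annihilate all of $\langle f(\CalA)\rangle$, so that $E_\omega|_{\CalA}\equiv 0$, its radical is all of $\CalA$, and no genuine separation occurs. You are right that this is a real issue, and your proposed fix is only conditional --- it requires $E_\omega|_{\CalA}\not\equiv 0$, which does not follow from $\omega\neq 0$ alone whenever $\langle f(\CalA)\rangle\neq\R^m$. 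But this gap is inherited from the statement of the lemma itself: the paper's own proof makes the same silent assumption, and in the application (the proof of Theorem \ref{thm:main}) the authors explicitly add the requirement that $\omega_i f(\q)>0$ for \emph{some} $\q\in\A_{i-1}$, which is precisely the missing nondegeneracy hypothesis. Read with that hypothesis in force, your argument for both directions matches the intended one; the only slip is the parenthetical claim that a nonzero PSD form is positive on ``the relative interior of the domain'' --- what you actually need (and what is true, since the radical is an affine set of strictly smaller dimension by Lemma \ref{lem:radAff}) is that it is positive on the universal configurations $\mathring{\CalA}$.
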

Note that this does NOT mean that the
$E_{\omega}$ 
is necessarily PSD over all of $\Cal$ 
or that the
configuration $\q$ is in the radical of 
the form $E_{\omega}$ 
defined over all of $\Cal$.

\begin{proof} 
Suppose that a stress $\omega \ne 0$ exists for the framework $(G,\q)$.
The condition that $E_{\omega}$ is PSD on $\A$ is equivalent to
$E_{\omega}(\q) \ge 0$ for all $\q$ in $\A$, which is equivalent to
the linear inequality $\omega f(\q) \geq 0$ for any $f(\q) \in f(\A$),
and any configuration $\q$ in $\CalA$.  When $E_{\omega}(\q)=0$, then
$f(\q)$ is in the closure of the complement of that inequality in
$\langle f(\CalA) \rangle$ and thus in the boundary of $f(\CalA)
\subset \langle f(\CalA) \rangle$.  

Conversely, suppose that $f(\q)$ is in the boundary of $f(\CalA)
\subset \langle f(\CalA) \rangle$.  Since the set $f(\CalA)$ is
convex, $f(\q)$ is in a supporting hyperplane
\[
\CalH = \{ \e \in \langle f(\CalA) \rangle \mid \omega   \e =0\},
\]
which is defined by a non-zero stress $\omega$.  Then
\begin{equation*}
0 \le \frac{1}{2}\omega   f(\q) = \omega R(\q)\q=\sum_{i<j}\omega_{ij}(\q_i-\q_j)^2=\q^t\Omega \otimes I^D \q = E_{\omega}(\q).
\end{equation*}

Thus 
the quadratic form defined by $E_{\omega}$ restricted to the affine set $\CalA$, 
is PSD
and has
$f(\q)$  in its radical.
\end{proof}

\begin{lemma} 
\label{lem:radAff}
Let $\A$ be  an affine set
and $E_\omega$ be a stress energy which we 
restrict to  $\A$. 
Then its  radical
must be an affine set. 
\end{lemma}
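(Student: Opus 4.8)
The goal is to show that if $\A$ is an affine set and $E_\omega$ is a stress-energy restricted to $\A$, then the radical of $E_\omega|_\A$ is again an affine set. Recall that an affine set is, by definition, a linear subspace of $\Cal = \R^{nD}$ cut out by a finite collection of equations of the form $\sum_{ij}\lambda_{ij}(\p_i - \p_j) = 0$; equivalently (as noted in the excerpt) it is any linear subspace of $\Cal$ that is closed under the diagonal action of affine transformations of $\R^D$. The cleanest route is to verify this latter invariance property for the radical, and then invoke the stated equivalence.

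First I would set up coordinates: writing a configuration as its $D$-by-$n$ matrix $P$, the form $E_\omega$ is $P \mapsto \tr(P\,\Omega\, P^t)$, i.e. $P^t\Omega \otimes I^D P$ on the vectorization. Its associated bilinear form is $B(P,Q) = \tr(P\,\Omega\, Q^t)$. The radical of $E_\omega|_\A$ is $\{P \in \A \mid \tr(P\,\Omega\, Q^t) = 0 \text{ for all } Q \in \A\}$, which is manifestly a linear subspace of $\Cal$, being an intersection of $\A$ with the kernels of finitely many linear functionals (one for each $Q$ in a spanning set of $\A$). So the only thing to check is closure under affine transformations.

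Next, the key computation: let $a: \R^D \to \R^D$ be an affine map, $a(x) = Mx + t$ for a $D$-by-$D$ matrix $M$ and vector $t$. Applying $a$ to a configuration $P$ (column by column) gives the matrix $MP + t\mathbf{1}^t$, where $\mathbf{1} \in \R^n$ is the all-ones vector. Since $\Omega$ has zero row and column sums, $\mathbf{1}^t\Omega = 0$ and $\Omega\mathbf{1} = 0$, so the translation part drops out: $B(MP + t\mathbf{1}^t,\, MQ + t\mathbf{1}^t) = \tr(M P\,\Omega\, Q^t M^t) = \tr\big((M^tM)\,P\,\Omega\,Q^t\big)$. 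In particular, if $M$ is invertible and $P$ lies in the radical, I want to conclude $a(P)$ does too. The subtlety is that $B(a(P), a(Q)) = \tr(M^tM \cdot P\Omega Q^t)$ is not obviously zero just from $\tr(P\Omega Q^t) = 0$; but since $\A$ is closed under affine maps, as $Q$ ranges over $\A$ so does $a^{-1}(Q) =: Q'$, and I should instead test $a(P)$ against an arbitrary $Q \in \A$: $B(a(P), Q) = B(MP + t\mathbf{1}^t, Q) = \tr(MP\,\Omega\, Q^t) = \tr(P\,\Omega\,(Q^tM)) = \tr(P\,\Omega\,(M^tQ)^t) = B(P,\, M^tQ)$. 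Here $M^tQ$ is the image of $Q$ under the linear map $x \mapsto M^t x$, which carries $\A$ into $\A$, so $M^tQ \in \A$; since $P$ is in the radical of $E_\omega|_\A$, this is $0$. Hence $a(P)$ is also in the radical, for every affine $a$ (invertibility of $M$ is not even needed). Thus the radical is a linear subspace of $\Cal$ closed under the diagonal action of affine maps of $\R^D$, and therefore — by the characterization of affine sets recalled after the definition — it is an affine set.

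The main obstacle, such as it is, is purely bookkeeping: making sure the translation part of an affine map genuinely annihilates against $\Omega$ (this is exactly the zero-row/column-sum property of a stress matrix), and being careful to test membership in the radical against the \emph{correct} variable — pushing the transformation onto the test vector $Q$ rather than trying to pull it through the fixed form. Once that trick is in place the argument is essentially immediate; the only remaining care is citing the (already-stated) fact that every affine-transformation-invariant linear subspace of $\Cal$ is expressible by equations of the form (\ref{eqn:affine}), so that no independent verification of that implication is required here.
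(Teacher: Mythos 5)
Your proof is correct and follows essentially the same route as the paper's: both arguments show the radical is a linear subspace invariant under the diagonal affine action, with translations annihilated by the zero row/column sums of $\Omega$ and the linear part handled by transferring the transform onto the test configuration via transposition (the paper phrases this as applying the ``inverse transpose'' to a universal configuration $\q$, you as $B(MP,Q)=B(P,M^tQ)$), before invoking the characterization of affine sets as affine-invariant subspaces. Your write-up is somewhat more explicit about the trace form and about testing against an arbitrary $Q\in\A$, but there is no substantive difference.
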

\begin{proof}
Let $\q$ be universal for $\A$. 
Then a configuration $\p \in \A$ is in the radical when 
\[
\sum_{i<j} \omega_{ij} (\p_i-\p_j)\cdot(\tilde{\q}_i-\tilde{\q}_j) =0,
\]
for any $\tilde{\q}$ that is an affine image of $\q$.

Suppose some $\p$ is in the radical. Then clearly so is any translation of $\p$.
Any linear transform applied to the coordinates of $\p$ can be defined using the above
equation by applying its inverse transpose to $\q$.
Thus the radical is invariant for affine transforms, 
making it an affine set.
$\Box$ \end{proof}

%%%%%%%%%%%%%%%%%%%%
\section{Iterated affine sets and the main theorem}\label{sect:affine-constraints}
%%%%%%%%%%%%%%%%%%%

\begin{definition} If  
$\Cal = \A_0 \supset \A_1 \supset \A_2 \supset \dots \A_k$ is a sequence of affine sets,  we call it
 an \emph{iterated affine set}. 
 \end{definition}
 %%%%%%

\begin{definition} 
Suppose an iterated affine set has a corresponding  
 sequence of stress energy
functions $E_1, \dots, E_k$ as defined 
of the form (\ref{eqn:stress-energy}) such that
 each $E_i$ is restricted to act only on $\A_{i-1}$.
Suppose that each  restricted $E_i$
is PSD (over $\A_{i-1}$), that $E_i(\q)=0$ for all $\q \in \A_i$, and that
$E_i(\q) > 0$ for all $\q \in  \A_{i-1} - \A_i$.
Then we call $E_1, \dots, E_k$ 
an \emph{(associated) iterated PSD stress} for this iterated affine set. 
 \end{definition}
 %%%%%%

Our main result is the following
characterization of dimensional rigidity.

\begin{theorem}\label{thm:main} 
Let $(G,\p)$ be a framework in $\R^d$, where $\p$ has an affine span of dimension $d$. Suppose
 $\Cal = \A_0 \supset \A_1 \supset \A_2 \supset \dots \A_k$
is an iterated affine set with $\p \in \A_k$ and with an associated iterated PSD stress. 
If the dimension of $\A_k$ is $(d+1)D$.
Then $(G,\p)$ is dimensionally  rigid
in $\R^d$. 

Conversely, if $(G,\p)$ is dimensionally rigid in $\R^d$, then there must
be an iterated affine set with $\p \in \A_k$, $\Dim(\A_k) = (d+1)D$,
with an associated iterated PSD stress.
\end{theorem}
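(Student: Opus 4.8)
The plan is to prove the two directions separately, with the forward direction being essentially a telescoping argument and the converse being an inductive facial-reduction construction.

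For the forward direction, suppose we are given the iterated affine set $\Cal = \A_0 \supset \dots \supset \A_k$ with $\p \in \A_k$, $\dim(\A_k) = (d+1)D$, and an associated iterated PSD stress $E_1, \dots, E_k$. Let $(G,\q)$ be any framework equivalent to $(G,\p)$; I want to show the affine span of $\q$ has dimension $\le d$. Since each $E_i$ is a stress energy of the form (\ref{eqn:stress-energy}), its value depends only on the edge lengths, so $E_i(\q) = E_i(\p) = 0$ for every $i$ (using that $\p \in \A_k \subseteq \A_i$ and $E_i$ vanishes on $\A_i$, together with the fact that $\q$ has the same measurement vector $f(\q)=f(\p)$). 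Now I argue inductively that $\q \in \A_i$ for all $i$: trivially $\q \in \A_0 = \Cal$; if $\q \in \A_{i-1}$, then since $E_i$ restricted to $\A_{i-1}$ is PSD and $E_i(\q) = 0$, the configuration $\q$ lies in the radical of $E_i|_{\A_{i-1}}$, and by the defining property $E_i(\r) > 0$ for $\r \in \A_{i-1} - \A_i$, so $\q$ cannot be in $\A_{i-1} - \A_i$, hence $\q \in \A_i$. Thus $\q \in \A_k$. By Lemma \ref{lem:metric-universality}, the dimension formula $\dim(\A_k) = D(d_k+1)$ where $d_k$ is the affine span dimension of a universal configuration for $\A_k$ gives $d_k = d$; and since $\p$ itself has affine span exactly $d$, $\p$ is universal for $\A_k$, so every configuration in $\A_k$ — in particular $\q$ — is an affine image of $\p$ and hence has affine span dimension $\le d$. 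This is exactly dimensional rigidity in $\R^d$.

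For the converse, suppose $(G,\p)$ is dimensionally rigid in $\R^d$. I build the sequence greedily. Set $\A_0 = \Cal$. Having constructed $\A_{i-1}$ (an affine set containing $\p$), consider $f(\p) \in f(\A_{i-1}) \subset \langle f(\A_{i-1})\rangle$. If $f(\p)$ is in the relative interior of $f(\A_{i-1})$, then by Lemma \ref{lem:metric-universality} $\p$ is universal for $\A_{i-1}$, so $\dim \A_{i-1} = D(d+1)$ (using that $\p$ has affine span exactly $d$ and that every equivalent framework is an affine image of $\p$ by Theorem \ref{thm:affineness}, hence also has span $\le d$, forcing the maximal span over $\A_{i-1}$ to be $d$ — here I need to check $\A_{i-1}$ contains only frameworks equivalent-or-related to $\p$ in a way that bounds the span; more carefully, any configuration in $\A_{i-1}$ with span $> d$ would have to be ruled out, and this is where I use that we only ever add constraints that $\p$ satisfies and that dimensional rigidity bounds the span of equivalent frameworks — the cleanest route is to note $f(\A_{i-1}) \ni f(\p)$ and that everything in $f^{-1}(f(\p))$ has span $\le d$, combined with the relative-interior/universality equivalence). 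In that case we stop with $k = i-1$. Otherwise $f(\p)$ is on the boundary of $f(\A_{i-1})$, so by Lemma \ref{lem:stress-boundary} there is a non-zero stress $\omega$ for $(G,\p)$ with $E_\omega|_{\A_{i-1}}$ PSD and $f(\p)$ (equivalently $\p$) in its radical. Set $\A_i$ to be the radical of $E_\omega|_{\A_{i-1}}$, which is an affine set by Lemma \ref{lem:radAff}, and set $E_i := E_\omega$; by construction $E_i$ is PSD on $\A_{i-1}$, vanishes on $\A_i$ (the radical is contained in the zero set, and for a PSD form the zero set equals the radical), and is strictly positive on $\A_{i-1} - \A_i$ (again because for a PSD form, off the radical the form is positive). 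Since $\omega \ne 0$ the inclusion $\A_i \subsetneq \A_{i-1}$ is strict — this needs the observation that a non-zero stress energy that is PSD on $\A_{i-1}$ cannot vanish identically on $\A_{i-1}$ unless $\A_{i-1}$ lies in its radical, but then $\omega$ would be zero as a functional on the measurement image... I should argue instead that $\dim \A_i < \dim \A_{i-1}$ because the supporting hyperplane from Lemma \ref{lem:stress-boundary} is proper. Because dimensions strictly decrease and are bounded below, the process terminates, and termination happens exactly when $f(\p)$ reaches the relative interior, giving $\dim \A_k = (d+1)D$.

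The main obstacle I anticipate is the strict-descent claim $\A_i \subsetneq \A_{i-1}$ in the converse: I need to be sure that the stress $\omega$ produced by Lemma \ref{lem:stress-boundary} genuinely cuts down the affine set, i.e.\ that $E_\omega$ does not already vanish on all of $\A_{i-1}$. This should follow from the fact that $f(\p)$ is on the boundary but $f(\A_{i-1})$ is full-dimensional in $\langle f(\A_{i-1})\rangle$, so the supporting functional $\omega$ is non-trivial on $\langle f(\A_{i-1})\rangle$ and hence $E_\omega$ is not identically zero on $\A_{i-1}$; then its radical is a proper subspace. A secondary subtlety is verifying that $\p$ stays universal/maximal-span appropriately at each stage so that the final dimension count $D(d+1)$ is exactly right rather than something smaller — this is handled by pairing Lemma \ref{lem:metric-universality} (relative interior $\leftrightarrow$ universality $\leftrightarrow$ maximal span) with Theorem \ref{thm:affineness} to pin the maximal span over $\A_k$ to $d$.
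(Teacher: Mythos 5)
Your proposal is correct and follows essentially the same route as the paper: the forward direction is the same telescoping argument (each $E_i$ depends only on $f(\q)$, so an equivalent $\q$ is forced down to $\A_k$, where universality of $\p$ finishes it), and the converse is the same greedy facial-reduction construction via Lemma \ref{lem:stress-boundary} and Lemma \ref{lem:radAff}, terminating when $f(\p)$ reaches the relative interior. Your extra care on the two subtleties you flag — strict descent of the $\A_i$, and using Theorem \ref{thm:affineness} to upgrade ``$f(\p)$ lies in the relative interior'' to ``$\p$ itself is universal for $\A_k$,'' which is exactly where dimensional rigidity enters — is warranted and resolves them correctly.
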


\begin{proof}  
First we prove the easy direction.
Since $E_i$ operates on the squared edge lengths,  the energy function
forces any equivalent framework ($G,\q)$ to be in $\A_{i}$ and ultimately
in $\A_k$.  Since the dimension of $\A_k$ is $(d+1)D$, $\p$ must be universal for $\A_k$, and so $\q$ must be
an affine image of $\p$ and thus has, at most, a $d$-dimensional affine span.

For the converse, suppose that $(G,\p)$ is dimensionally rigid in
$\R^d$.  The configuration $\p$ is such that $\p \in \Cal =\A_0$. If
$f(\p)$ is in the boundary of $f(\A_0)$ we apply Lemma
\ref{lem:stress-boundary} to find a stress $\omega_1$ and a
corresponding stress-energy function $E_{1}$ whose radical 
includes $\p$, and by Lemma~\ref{lem:radAff}
is an 
affine set $\A_1$
In order to iterate the process we define
\begin{eqnarray}\label{eqn:Ai}
\A_i= \{\q \in \A_{i-1} \mid \omega_iR(\q)\q=0\},
\end{eqnarray}
where $\omega_i \ne 0$ is chosen such that $\omega_i R(\q)\q= \omega_i
f(\q) \ge 0$, for all $\q \in \A_{i-1}$, $\omega_i R(\q)\q = \omega_i
f(\q) > 0$ for some $\q \in \A_{i-1}$, and $\omega_i R(\p)\p= \omega_i
f(\p) = 0$.  The quadratic form $\q^t \Omega_i \otimes I^D \q$ is PSD
when restricted to $\A_{i-1}$, and 
from Lemma~\ref{lem:radAff},
the resulting $\A_i$ must also be an affine set.
When
such an $\omega_i \ne 0$ cannot be found, we stop and that is the end
of the sequence of affine sets.  This sequence must terminate as each
of our subsequent affine sets is of strictly lower dimension.

From Lemma \ref{lem:stress-boundary} we see that we can continue creating stresses
$\omega_1, \dots, \omega_k$ and affine sets until $f(\p)$ is in the
relative interior of $f(\A_k)$, and is universal with respect to
$\A_k$ by Lemma \ref{lem:metric-universality}.  If the dimension of
$\A_k$ is not $D(d+1)$, then the dimension of $\A_k$ is strictly
greater than $D(d+1)$ and the dimension of the affine span of $\p$
would have been greater than $D(d+1)$, a contradiction. $\Box$
\end{proof}

 Figure \ref{fig:race-track}, similar to Figure 2 of \cite{Gortler-Thurston2}, shows a symbolic version of this process in the measurement set, where the indicated point represents the image of the configuration and its relation to the measurement cone.  The arrows represent the stress vectors.
\begin{figure}[here]
    \begin{center}
        \includegraphics[width=0.5\textwidth]{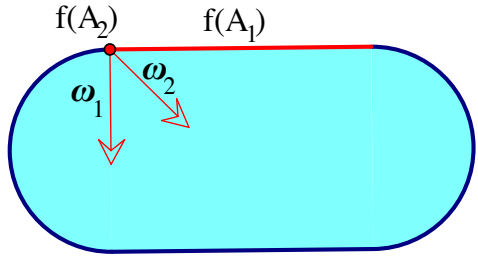}% 
        \end{center}
    \caption{The sets $f(\A_1)$ and $f(\A_2)$ shown as the point and line segment.}
    \label{fig:race-track}
    \end{figure}

%%%%%%%%%%%%%%%%%%%%%%%%%%%%%%%%
\subsection{The basis matrix}\label{subsect:basis-matrix}
%%%%%%%%%%%%%%%%%%%%%%%%%%%%%%%%

An affine set $\A$ can always be represented by a universal 
configuration $\b = (\b_1 \dots \b_n)$ of $n$ points in $\R^D$, with an affine
span of some dimension, say $d$.
Without loss of generality, we
can assume (using a translation if needed)
that the linear span of the $\b_i$ (thought of as vectors) is of dimension $d+1$.

%%%%%
\begin{definition}We  define a \emph{basis matrix} $B$, for an affine set
as a rank $d+1$ matrix with 
$n$ columns and $D$ rows given by the coordinates of $\b$.
\end{definition}
%%%%% 

Since this matrix has rank $d+1$, we can then apply row reduction operations so that $B$ has only $d+1$ rows. Additionally, (if we want) since the affine span of $\b$ is only $d$ dimensional, we can perform these operations so that the final row is the all-ones vector.

\begin{definition}
Given an iterated affine set,
$\Cal = \A_0 \supset \A_1 \supset \A_2 \supset \dots \A_k$.
We define $d_i$ to be the dimension of the affine span
of a universal configuration for $\A_i$. 
\end{definition}

\begin{definition}
Given an iterated PSD equilibrium stress for an iterated affine set, a
basis matrix $B_{i-1}$ for each $\A_{i-1}$, and the $n$-by-$n$ stress
matrix $\Omega_i$ corresponding to each $E_i$, we define 
a \emph{restricted stress matrix}
$\Omega^*_i
:= B_{i-1} \Omega_i B^t_{i-1}$.  Each $\Omega^*_i$ is a
$(d_{i-1}+1)$-by-$(d_{i-1}+1)$ PSD matrix.
\end{definition}

The following is a Corollary of Theorem \ref{thm:main}.

\begin{corollary}\label{cor:mainD} 
Let $(G,\p)$ be a framework in $\R^d$ with an affine span of dimension
$d$.  Suppose $\Cal = \A_0 \supset \A_1 \supset \A_2 \supset \dots
\A_k$ is an iterated affine set with $\p \in \A_k$, and that this
iterated affine set has an associated iterated PSD stress, described
by restricted stress
matrices $\Omega^*_i$.  Let $r_i$ be the rank of $\Omega_i^*$.  If
\begin{equation}\label{eqn:affine-rank}
\sum_{i=1}^{k} r_i = n-d-1.
\end{equation}
then $(G,\p)$ is dimensionally rigid.
Conversely, if $(G,\p)$ is dimensionally rigid in $\R^d$, then there must
be an iterated affine set with $\p \in \A_k$,
with an associated iterated PSD stress such that 
Equation~(\ref{eqn:affine-rank}) holds.
\end{corollary}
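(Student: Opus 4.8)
The plan is to deduce this corollary from Theorem \ref{thm:main} by translating the dimension condition $\Dim(\A_k) = (d+1)D$ into the rank condition $\sum_i r_i = n-d-1$, using the basis matrices and restricted stress matrices. The key bookkeeping is to track how the affine span dimension $d_i$ of a universal configuration for $\A_i$ decreases as we pass from $\A_{i-1}$ to $\A_i$, and to express that decrease in terms of $r_i = \rank(\Omega_i^*)$.

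First I would establish the crucial local fact: for each $i$, one has $d_{i-1} - d_i = r_i = \rank(\Omega_i^* )$, where $\Omega_i^* = B_{i-1}\Omega_i B_{i-1}^t$. To see this, let $\b$ be a universal configuration for $\A_{i-1}$ with basis matrix $B_{i-1}$ (rank $d_{i-1}+1$, rows spanning the coordinates together with the all-ones vector). Restricting $E_i$ to $\A_{i-1}$, a configuration in $\A_{i-1}$ is (up to affine image of $\b$) determined by a choice of linear functionals on the row space of $B_{i-1}$; concretely $\q = X B_{i-1}$ for a $D$-by-$(d_{i-1}+1)$ matrix $X$, and $E_i(\q) = \tr(X \Omega_i^* X^t) \otimes \dots$ — more precisely, $E_i(\q) = \sum \text{(rows of }X) \,\Omega_i^* \,(\text{rows of }X)^t$, so $E_i$ restricted to $\A_{i-1}$ is PSD iff $\Omega_i^*$ is PSD (which is the content of the restricted stress matrix being PSD), and its radical — which by Lemma \ref{lem:radAff} is exactly $\A_i$ — corresponds to $X$ whose rows lie in $\ker \Omega_i^*$. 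Hence a universal configuration for $\A_i$ has its $n$ points spanning a space of linear dimension $(d_{i-1}+1) - r_i$, i.e. $d_i + 1 = d_{i-1}+1 - r_i$, giving $d_{i-1} - d_i = r_i$.

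Then I would simply telescope: $d_0 - d_k = \sum_{i=1}^k (d_{i-1}-d_i) = \sum_{i=1}^k r_i$. Here $d_0 = n-1$, since $\A_0 = \Cal$ is the full configuration space and a universal (maximal affine span) configuration in $\R^D$ with $D \ge n$ has affine span of dimension $n-1$. By Lemma \ref{lem:metric-universality} and the fact $\p \in \A_k$, the framework $(G,\p)$ is dimensionally rigid iff $\p$ is universal for $\A_k$ iff $d_k = d$ (using that $\p$ has affine span exactly $d$ and the easy direction of Theorem \ref{thm:main}); note $\Dim(\A_k) = (d_k+1)D$, so $\Dim(\A_k) = (d+1)D \iff d_k = d$. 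Therefore $\sum_{i=1}^k r_i = d_0 - d_k = (n-1) - d$ exactly when $\Dim(\A_k) = (d+1)D$, and the corollary follows from both directions of Theorem \ref{thm:main}: equation (\ref{eqn:affine-rank}) holding is equivalent to the hypothesis $\Dim(\A_k) = (d+1)D$ of Theorem \ref{thm:main}, both for the forward implication (rank condition $\Rightarrow$ dimensional rigidity) and for the converse (dimensional rigidity $\Rightarrow$ existence of the iterated PSD stress with the rank condition).

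The main obstacle I expect is the careful verification that $E_i$ restricted to $\A_{i-1}$, written in coordinates via the basis matrix $B_{i-1}$, really has Gram-type matrix $\Omega_i^* = B_{i-1}\Omega_i B_{i-1}^t$, and that its radical corresponds precisely to $\ker \Omega_i^*$ — i.e., that the parametrization $\q \mapsto X$ with $\q = X B_{i-1}$ is faithful modulo affine images and that "universal for $\A_i$" translates to "$X$ has maximal rank subject to rows in $\ker \Omega_i^*$." This requires knowing that $\A_{i-1}$ really consists exactly of the affine images of $\b$ (Lemma \ref{lem:metric-universality}: a universal configuration maps affinely onto every other member), plus the Kronecker-product identity $E_\omega(\q) = \q^t (\Omega \otimes I^D)\q$ from Section \ref{sect:stress}. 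Once that identification is in place, everything else is the telescoping count above, which is routine.
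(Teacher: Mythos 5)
Your proposal is correct and follows essentially the same route as the paper: the paper likewise observes that the radical of $E_i$ on $\A_{i-1}$ corresponds via $B_{i-1}$ to $\ker\Omega_i^*$, so $d_i+1 = d_{i-1}+1-r_i$, telescopes to $d_k+1 = n - \sum_i r_i$, and reduces both directions to Theorem \ref{thm:main} by noting that Equation (\ref{eqn:affine-rank}) is equivalent to $\Dim(\A_k)=(d+1)D$.
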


The two versions of this theorem are related as follows:
the zero set
of configurations for the energy function $E_i$ corresponds via the
change of basis $B_{i-1}$,  to the kernel of
the matrix $\Omega^*_i$.  Since the rank of $\Omega^*_i$ is
$r_i$, its kernel has dimension $d_{i-1} +1-r_i=d_i+1$.
Thus $d_k+1 = n -\sum_{i=1}^i r_i =(d+1)$.  So
$\A_k$, which has dimension $(d_k+1)D$, is the set of all affine images of $\p$ in
$\R^D$.  

Figure \ref{fig:ladder}, described later, is an example of an application of Theorem
\ref{thm:main}.  The set of configurations of all the points, where
for a pole, one is at the midpoint between the other two, is an affine
set.  The stress is indicated.  Each of the restricted stress matrices
has rank one.  The horizontal members also have a stress that is in
equilibrium when restricted to the intersection of the first two
affine sets.  This matrix also has rank one.  Thus all the stress
matrices can be assumed to be (and are) PSD.  But $n=6, d=2$, so $d+1
+\sum_{i=1}^i r_i=3+ 3 = 6 = n$, and this $(G,\p)$ is dimensionally
rigid in $\R^2$.  This framework
has a flex in the plane that is an affine motion, but the point is
that it cannot be twisted into a $3$-dimensional shape.  The
calculations are done in Subsection \ref{subsect:ladder}.

One application of Theorem \ref{thm:main} is to universal rigidity.

\begin{corollary}\label{cor:main} 
Suppose $\Cal = \A_0 \supset \A_1 \supset \A_2 \supset \dots \A_k$ is an 
iterated affine set for a framework $(G,\p)$ with $n$ vertices in
$\R^d$. Suppose that the iterated affine set has an associated
iterated PSD stress. If $\dim(\A_k)=D(d+1)$ 
and the member directions do not lie on a conic at infinity, then $(G,\p)$
is universally rigid. 

Conversely if $(G,\p)$ is universally rigid in
$\R^d$, then there is such an iterated affine set 
with an iterated PSD stress,
and the member directions do not lie on a
conic at infinity.
\end{corollary}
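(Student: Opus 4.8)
The plan is to derive Corollary~\ref{cor:main} directly from Theorem~\ref{thm:main} together with the observation, recorded in the (unnumbered) corollary following Theorem~\ref{thm:affineness}, that a framework is universally rigid in $\R^d$ if and only if it is dimensionally rigid in $\R^d$ and its member directions do not lie on a conic at infinity. In other words, Corollary~\ref{cor:main} is exactly Theorem~\ref{thm:main} with the extra clause ``the member directions do not lie on a conic at infinity'' appended to both sides of the biconditional. Throughout I use the standing convention that ``$(G,\p)$ in $\R^d$'' means the affine span of $\p$ has dimension exactly $d$, and that $D \ge n$ is fixed so that every configuration in any Euclidean space equivalent to $\p$ may be assumed, after a congruence, to lie in $\R^D$.

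For the forward implication, I would begin from the hypotheses: an iterated affine set $\Cal = \A_0 \supset \A_1 \supset \dots \supset \A_k$ with $\p \in \A_k$, an associated iterated PSD stress, and $\dim(\A_k) = D(d+1)$. Theorem~\ref{thm:main} then gives immediately that $(G,\p)$ is dimensionally rigid in $\R^d$. Now let $(G,\q)$ be any framework in some $\R^D \supset \R^d$ equivalent to $(G,\p)$. By Theorem~\ref{thm:affineness}, $\q$ is an affine image of $\p$, and by dimensional rigidity its affine span has dimension at most $d$, so after composing with an isometry of $\R^D$ we may assume $\q$ lies in $\R^d$ and is still an affine image of $\p$ equivalent to $(G,\p)$. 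Since the member directions of $(G,\p)$ do not lie on a conic at infinity, Proposition~\ref{prop:affine-flex} says $(G,\p)$ has no equivalent non-congruent affine image in $\R^d$; hence $\q$ is congruent to $\p$. As $\q$ was arbitrary, $(G,\p)$ is universally rigid.

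For the converse, suppose $(G,\p)$ is universally rigid in $\R^d$. Then every framework equivalent to $(G,\p)$ is congruent to it and hence has a $d$-dimensional affine span, so $(G,\p)$ is dimensionally rigid in $\R^d$. Also, a non-trivial affine flex of $(G,\p)$ in $\R^d$ would yield an equivalent non-congruent framework, which is impossible, so by Proposition~\ref{prop:affine-flex} the member directions do not lie on a conic at infinity. Finally, applying the converse half of Theorem~\ref{thm:main} to the dimensionally rigid framework $(G,\p)$ produces an iterated affine set with $\p \in \A_k$, $\dim(\A_k) = (d+1)D$, and an associated iterated PSD stress, which is exactly what is asserted.

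I do not expect any substantive obstacle: all of the content is already carried by Theorem~\ref{thm:main} and Theorem~\ref{thm:affineness}, and this statement is essentially a packaging step. The only points demanding care are bookkeeping: making sure the ambient dimension $D \ge n$ is fixed once and for all so that any $\q$ equivalent to $\p$ may be taken in $\R^D$ and, once its span is bounded by $d$, moved isometrically into $\R^d$; verifying that this isometric move preserves both equivalence with $(G,\p)$ and the property of being an affine image of $\p$; and checking that the standing hypothesis that $\p$ spans $\R^d$ is what licenses the use of Proposition~\ref{prop:affine-flex} and the converse direction of Theorem~\ref{thm:main} verbatim.
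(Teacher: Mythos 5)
Your proposal is correct and follows the same route the paper intends: the corollary is obtained by combining Theorem~\ref{thm:main} (the iterated-affine-set characterization of dimensional rigidity) with the unnumbered corollary to Theorem~\ref{thm:affineness} stating that universal rigidity is equivalent to dimensional rigidity plus the member directions not lying on a conic at infinity, the latter handled exactly as you do via Proposition~\ref{prop:affine-flex}. Your extra bookkeeping about moving $\q$ isometrically into $\R^d$ while preserving equivalence and affine-image status is sound and just makes explicit what the paper leaves implicit.
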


For example, if another bar is inserted between any of two of the
vertices that do not already have a bar in Figure \ref{fig:ladder},
the resulting framework will be universally rigid.

%%%%%%%%%%%%%%%%%%%
\section{Convexity interpretation}\label{sect:convexity}
%%%%%%%%%%%%%%%%%%%

We now point out the connection of the results here from the
point of view of basic convexity considerations.  

%%%%%
\begin{definition} For any finite
dimensional convex set $X$ and any point $x$ in $X$, let $F(x)$, 
called the \emph{face} of $x$, be
the largest convex subset of $X$ containing $x$ in its relative
interior.  
Equivalently~\cite{Grunbaum-polytopes}, $F(x)$ is the set of points $z \in X$ so that there is a
$z' \in X$ with $x$ in the relative interior of the segment $[z',z]$.
\end{definition}
%%%%%

%%%%
\begin{definition} A subset $X_0 \subset X$ is called a
\emph{face} of $X$ if $X_0 = F(x)$ for some $x \in X$.     
\end{definition}
%%%%%

\begin{definition}

Let $X= X_0 \supset X_1 \supset X_2 \supset \dots X_k$ be a
sequence of faces of $X$, which we call a \emph{face flag}.  If
each $X_i = \CalH_i \cap X_{i-1}$, where $\CalH_i \subset \langle
X_{i-1}\rangle$ is a support hyperplane for $X_{i-1}\subset
\langle X_{i-1}\rangle$ for $i=1, \dots, k$, then we call the
face  flag \emph{supported}.  
\end{definition}
%%%%%
The
following is an easy consequence of these definitions. 

\begin{lemma} A subset $Y$ of $X$ is a face of $X$ if and only if
$Y = X_k$, for some supported flag face.  \end{lemma}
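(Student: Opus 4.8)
The plan is to prove both directions of the biconditional separately, in each case reducing to the defining property of a face together with an inductive use of support hyperplanes.

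\textbf{The ``if'' direction.} Suppose $Y = X_k$ for some supported face flag $X = X_0 \supset X_1 \supset \dots \supset X_k$. First I would record the elementary fact that if $Z$ is a face of a convex set $X'$ and $X'$ is a face of $X$, then $Z$ is a face of $X$; this is immediate from the relative-interior characterization of faces, since membership of a point in the relative interior of a convex subset is an intrinsic notion and faces of faces inherit the "$x$ lies between $z$ and $z'$" property. (Alternatively: a face $X'$ of $X$ satisfies the stronger property that any closed segment in $X$ whose relative interior meets $X'$ lies entirely in $X'$, and this property is visibly transitive.) Given this, it suffices to observe that in a supported flag each $X_i = \CalH_i \cap X_{i-1}$ is a face of $X_{i-1}$: the intersection of a convex set with a support hyperplane is always a face of that set. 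Then by induction on $i$, each $X_i$ is a face of $X$, and in particular $X_k = Y$ is a face of $X$.

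\textbf{The ``only if'' direction.} Suppose $Y$ is a face of $X$, so $Y = F(x)$ for some $x \in X$. The plan is to build the supported flag by descending induction on the dimension of the affine span. If $\langle Y \rangle = \langle X \rangle$, then $Y$ has nonempty interior relative to $\langle X \rangle$; since $x \in \ri(F(x))$ this forces $Y = X$, and the flag of length $0$ works. Otherwise $\langle Y \rangle$ is a proper affine subspace of $\langle X \rangle$, and I would pick any point $x \in \ri(Y)$. Because $x$ lies on the relative boundary of $X$ (as $\dim\langle Y\rangle < \dim\langle X\rangle$ and $Y = F(x)$ is the smallest face containing $x$), there is a support hyperplane $\CalH_1 \subset \langle X \rangle$ of $X$ through $x$ with $X \not\subset \CalH_1$. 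Set $X_1 := \CalH_1 \cap X$. This is a face of $X$ of strictly smaller affine-span dimension, it contains $x$, and hence contains $F(x) = Y$; moreover $Y$ is still a face of $X_1$ (a face of $X$ contained in a subface is a face of the subface — again from the betweenness characterization). Now recurse on $X_1$ in place of $X$: since $\dim\langle X_1\rangle < \dim\langle X\rangle$ the process terminates, producing a supported flag $X = X_0 \supset X_1 \supset \dots \supset X_k$ with $X_k = Y$.

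\textbf{Main obstacle.} The only genuinely delicate point is the transitivity/heredity bookkeeping for faces: that a face of a face is a face, and conversely that if $Y \subset X_1 \subset X$ with $Y$ a face of $X$ and $X_1$ a face of $X$ then $Y$ is a face of $X_1$. Both follow cleanly once one uses the characterization $F(x) = \{z \in X : \exists z' \in X,\ x \in \ri[z',z]\}$ quoted in the definition (equivalently, the "segment stability" property of faces), so I would state that property as a one-line sublemma up front and invoke it wherever needed, rather than re-deriving it each time. Everything else — intersection with a support hyperplane is a face, and the dimension strictly drops at each step so the induction terminates — is routine.
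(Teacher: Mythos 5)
Your proof is correct, and it supplies exactly the standard facial-structure argument that the paper omits (the lemma is stated there as "an easy consequence of these definitions" with no written proof): the "if" direction via "a support-hyperplane section is a face" plus transitivity of faces, and the "only if" direction via the nontrivial supporting hyperplane theorem at a relative boundary point together with induction on the dimension of the affine span. The only implicit step worth flagging is your use of $Y=F(x)$ for an \emph{arbitrary} $x\in\ri(Y)$, which needs the standard fact that $F(x)=F(x')$ whenever $x\in\ri(F(x'))$; this follows from the same segment characterization you already isolate as a sublemma, so no gap remains.
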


We next specialize to the case when the space $X=\CalM$, the
measurement space for the graph $G$ defined in Section
\ref{sect:measurement}.  The function $f$ is the rigidity map as
before.

\begin{lemma}\label{lem:hyperplane-convexity} A supporting
hyperplane $\CalH \subset \R^m$ for $\CalM$ corresponds to a
non-zero PSD stress $\omega$ for the graph $G$.  A hyperplane $\CalH$
supports a convex subcone of $X_i \subset \CalM$ if and only if
there is a quadratic energy form $E_{\omega}$ which is PSD on
$f^{-1}(X_i)$ and $E_{\omega}(\p)=0$ for some $\p \in
f^{-1}(X_i)$.  \end{lemma}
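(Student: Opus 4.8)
The plan is to recognize this lemma as the convexity-language repackaging of the single identity
\[
\tfrac12\,\omega\, f(\p) \;=\; E_{\omega}(\p) \;=\; \p^{t}(\Omega\otimes I^{D})\p
\]
together with Lemma~\ref{lem:stress-boundary}. First I would fix the coordinates on $\R^m$ by the edges of $G$, so that \emph{every} linear functional on $\R^m$ is evaluation against a stress vector $\omega$, and a hyperplane in $\R^m$ is exactly a set $\{\e\mid\omega\e=c\}$ with $\omega\ne 0$. The key preliminary observation is that a supporting hyperplane of a closed convex \emph{cone} $K$ (with apex $0$) must be linear: if $\e_{0}\in K$ lies on the support hyperplane $\{\omega\e=c\}$ and $K\subseteq\{\omega\e\ge c\}$, then $t\e_{0}\in K$ for all $t\ge 0$ forces $tc\ge c$ for all $t\ge0$ (take $t=0$ and $t=2$), hence $c=0$. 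So, after choosing a sign, a supporting hyperplane of $K$ is $\CalH=\{\e\mid\omega\e=0\}$ with $\omega\ne0$, $\omega\e\ge0$ on $K$, and $\omega\e=0$ at some point of $K$.

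Now I apply this with $K=\CalM=f(\Cal)$. Then $\omega\e\ge0$ for all $\e\in\CalM$ says $\omega f(\p)\ge0$ for all $\p\in\Cal=\R^{nD}$, which by the displayed identity says the quadratic form $E_{\omega}$ is PSD on $\R^{nD}$, equivalently $\Omega\otimes I^{D}$ is PSD, equivalently $\Omega$ is PSD (as noted in Section~\ref{sect:stress}); this, with $\omega\ne0$, is precisely the first assertion, and the correspondence becomes a bijection once $\omega$ is taken up to positive scaling. For the second assertion I apply the same preliminary observation with $K=X_{i}$, a convex subcone of $\CalM$. A hyperplane $\CalH=\{\omega\e=0\}$ supports $X_{i}$, and hence exposes the subcone $\CalH\cap X_{i}$, if and only if $\omega\e\ge0$ for all $\e\in X_{i}$ with equality at some $\e_{0}\in X_{i}$; writing $\e=f(\p)$ with $\p\in f^{-1}(X_{i})$ and $\e_{0}=f(\p_{0})$ with $\p_{0}\in f^{-1}(X_{i})$, the displayed identity turns ``$\omega\e\ge0$ on $X_{i}$'' into ``$E_{\omega}(\p)\ge0$ for all $\p\in f^{-1}(X_{i})$'' (i.e.\ $E_{\omega}$ PSD on $f^{-1}(X_{i})$) and turns ``$\omega\e_{0}=0$'' into ``$E_{\omega}(\p_{0})=0$''. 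Reading these equivalences in both directions gives the stated ``if and only if.'' Alternatively one may invoke Lemma~\ref{lem:stress-boundary} directly, taking $\CalA$ to be the affine set with $f(\CalA)=X_{i}$ furnished by Lemma~\ref{lem:metric-universality}, and using that for a PSD form the radical coincides with the zero set, so ``$f(\p_{0})$ in the radical'' is the same as ``$E_{\omega}(\p_{0})=0$''.

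The essential content is already contained in Lemmas~\ref{lem:metric-universality} and~\ref{lem:stress-boundary}, so I expect no real obstacle: the proof is bookkeeping plus the elementary fact that a supporting hyperplane of a cone passes through its apex. The one point demanding care is that ``$E_{\omega}$ PSD on $f^{-1}(X_{i})$'' is strictly weaker than ``$E_{\omega}$ PSD on all of $\Cal$'' (equivalently ``$\Omega$ PSD''), which is exactly the caveat recorded in the Note after Lemma~\ref{lem:stress-boundary}: a support hyperplane of a proper subcone $X_{i}$ need not be a support hyperplane of $\CalM$, so the two sentences of the lemma genuinely differ in strength and must not be conflated. Matching the two conventions for ``hyperplane'' --- a hyperplane in $\R^{m}$ versus a support hyperplane $\CalH_{i}\subset\langle X_{i-1}\rangle$ as in the definition of a supported face flag --- is the only other thing to verify, and it is immediate since restricting $\omega$ to $\langle X_{i-1}\rangle$ and extending a functional by zero are mutually inverse on the relevant functionals.
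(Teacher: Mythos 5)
Your proof is correct, and it is essentially the argument the paper intends: the paper states this lemma without proof, but the justification it relies on is exactly the identity $\tfrac12\,\omega f(\p)=E_\omega(\p)$ together with the fact that a support hyperplane of a cone passes through the apex, which is precisely the computation carried out in the paper's proof of Lemma~\ref{lem:stress-boundary}. Your added observations (faces of $\CalM$ are themselves cones through the origin, and ``PSD on $f^{-1}(X_i)$'' is genuinely weaker than ``$\Omega$ PSD'') are accurate and consistent with the paper's stated caveat.
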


%%%%%
\begin{definition} If $\p \in \Cal$ is a configuration, define $\A(\p)$ to be the
set of all affine images of $\p$. 
As before,  we call any $\q$ of maximal
dimensional affine span in $\A(\p)$ a \emph{universal
configuration} for  $\A(\p)$.  Define
$\mathring{\A}(\p)$ to be the set of universal configurations of
$\A(\p)$.
\end{definition}
%%%%%

%%%%%%%%
\begin{lemma}\label{lem:faceInAffine} 
Suppose  $(G,\p)$ is
dimensionally rigid.
Then
\[f^{-1}(F(f(\p))) \subset \A(\p).\]
Thus additionally, we have
\[F(f(\p)) \subset f(\A(\p)).\]
\end{lemma}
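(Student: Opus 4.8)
\textbf{Proof proposal for Lemma \ref{lem:faceInAffine}.}

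The plan is to show that if $\q \in \Cal$ satisfies $f(\q) \in F(f(\p))$, then $\q$ is an affine image of $\p$; the second inclusion is then immediate by applying $f$ to the first. The key tool is Theorem \ref{thm:affineness}, which says that any framework equivalent to a dimensionally rigid one is an affine image; so it suffices to produce, from a point of the face, a framework that is genuinely \emph{equivalent} to $(G,\p)$, i.e.\ whose $f$-value equals $f(\p)$ exactly. The natural device is the midpoint/averaging construction already used twice in the excerpt (in the proof of Theorem \ref{thm:affineness} and of Lemma \ref{lem:metric-universality}): given configurations in complementary copies of $\R^D$, a convex combination of their $f$-images is realized by the ``stacked'' configuration, via Equation (\ref{eqn:metric}).

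In detail: first I would take $\q$ with $f(\q)\in F(f(\p))$. By the characterization of faces in Definition (the ``Equivalently'' clause), there is a point $\e' \in \CalM$ and a configuration $\q'$ with $f(\q')=\e'$ such that $f(\p)$ lies in the relative interior of the segment $[f(\q'), f(\q)]$; so $f(\p) = (\cos\theta)^2 f(\q') + (\sin\theta)^2 f(\q)$ for some $\theta \in (0,\pi/2)$. Second, regard $\q'$ and $\q$ as living in complementary copies of $\R^D$ and form the configuration $\tilde\p$ with $\tilde\p_i = ((\cos\theta)\q'_i, (\sin\theta)\q_i)$. By Equation (\ref{eqn:metric}), $f(\tilde\p) = f(\p)$, so $(G,\tilde\p)$ is equivalent to $(G,\p)$. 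Third, since $(G,\p)$ is dimensionally rigid, Theorem \ref{thm:affineness} gives that $\tilde\p$ is an affine image of $\p$; in particular the affine span of $\tilde\p$ has dimension at most $d$. But $\q$ is obtained from $\tilde\p$ by the affine map that projects onto the second coordinate block and scales by $1/\sin\theta$, so $\q$ too is an affine image of $\p$, i.e.\ $\q \in \A(\p)$. This proves $f^{-1}(F(f(\p))) \subset \A(\p)$, and applying $f$ yields $F(f(\p)) \subset f(f^{-1}(F(f(\p)))) \subset f(\A(\p))$, using that $f$ is surjective onto $\CalM \supset F(f(\p))$.

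The main obstacle I anticipate is purely bookkeeping rather than conceptual: making sure the scaling constants $\cos\theta, \sin\theta$ are both nonzero (which is exactly where relative-interiorness of $f(\p)$ in the segment is used, so that $\q$ really is recovered from $\tilde\p$ by an honest affine — indeed invertible-on-its-span — map), and being careful that ``affine image'' is used consistently with the paper's convention (affine maps $\R^D \to \R^D$, possibly degenerate, which is fine here). One should also note that the degenerate case $f(\q) = f(\p)$ is covered directly by Theorem \ref{thm:affineness} without any of this machinery, so the argument above is only needed when $\q$ is a non-trivial face point. No step requires anything beyond Theorem \ref{thm:affineness}, Equation (\ref{eqn:metric}), and the stated definition of a face.
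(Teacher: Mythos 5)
Your proposal is correct and follows essentially the same route as the paper: the paper argues by contradiction (assuming $\q \notin \A(\p)$ with $f(\q) \in F(f(\p))$, producing $\r$ with $f(\p)$ in the relative interior of $[f(\q), f(\r)]$, forming the stacked configuration $\tilde{\p}=(\alpha\q,\beta\r)$ equivalent to $\p$, and invoking Theorem \ref{thm:affineness}), while you run the identical construction directly and recover $\q$ from $\tilde{\p}$ by projection and rescaling. Your explicit attention to the nonvanishing of $\sin\theta$ and to the degenerate case $f(\q)=f(\p)$ is a small but welcome tightening of the paper's argument.
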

%%%%%%%%

\begin{proof}
Suppose not. 
Then 
there is a configuration $\q \not\in \A(\p))$
but such that $f(\q) \in F(f(\p))$. 
Since $f(\p)$ is in the interior of
the face, and f(\q) is in the face, then, from the definition of a face,
there must be some third
configuration $\r$, such that $f(\p)$ is in the relative interior
of the segment [f(\q),~f(\r)]. As in the proof of Lemma
\ref{lem:metric-universality}, we can use 2 complementary spaces,
and find appropriate scalars $\alpha$ and $\beta$ such that
$\tilde{\p} := (\alpha \q, \beta \r)$ is equivalent to $\p$. But
since $\q$ is not an affine image of $\p$, then neither is
$\tilde{\p}$. This, together with Theorem \ref{thm:affineness},
contradicts our assumption that $\p$ was dimensionally
rigid. 
$\Box$ \end{proof}

%%%%%%%%%%
\begin{lemma}\label{lem:AffineInFace} 
Suppose  $(G,\p)$ is
dimensionally rigid.
Then
\[F(f(\p)) \supset f(\A(\p)).\]
Thus additionally, we have
\[f^{-1}(F(f(\p))) \supset \A(\p).\]
\end{lemma}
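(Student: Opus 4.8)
\textbf{Proof proposal for Lemma \ref{lem:AffineInFace}.}

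The plan is to show that every point of $f(\A(\p))$ lies in $F(f(\p))$, the face of $f(\p)$ in the convex set $\CalM$. Recall that $F(f(\p))$ is characterized as the largest convex subset of $\CalM$ having $f(\p)$ in its relative interior, equivalently the set of $z\in\CalM$ such that $f(\p)$ lies in the relative interior of a segment $[z',z]$ with $z'\in\CalM$. So it suffices to take an arbitrary $\tilde\q\in\A(\p)$ and produce a configuration $\r$ with $f(\p)$ in the relative interior of $[f(\tilde\q),f(\r)]$. Since $\tilde\q$ is an affine image of $\p$, there is an affine map $a$ with $\tilde\q=a(\p)$; write $a(\x)=L\x+v$ for a linear map $L$ and translation $v$. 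The natural candidate for the companion point is the affine image by the ``complementary'' map, chosen so that the metric identity (\ref{eqn:metric}) in the proof of Lemma \ref{lem:metric-universality} produces $f(\p)$ as a convex combination.

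The key computational step is the following. Working in two complementary copies of $\R^D$, for a scalar $\theta\in(0,\pi/2)$ consider the configuration $((\cos\theta)\,\tilde\q_i,(\sin\theta)\,\r_i)$; by (\ref{eqn:metric}) its rigidity image is $(\cos\theta)^2 f(\tilde\q)+(\sin\theta)^2 f(\r)$, which lies on the segment $[f(\tilde\q),f(\r)]$. I want to choose $\r$ (itself an affine image of $\p$, so that the combined configuration is still an affine image of $\p$ living in some $\R^{2D}$, hence its image is automatically in $\CalM$) together with $\theta$ so that this combined configuration is equivalent to $\p$, i.e. its image is exactly $f(\p)$. Since $\p$ has $d$-dimensional affine span and $\tilde\q=a(\p)$ with $a$ affine, one arranges $\r$ as a suitable affine image of $\p$ and picks $\theta$ so that $(\cos\theta)^2(\p_i-\p_j)^tL^tL(\p_i-\p_j)+(\sin\theta)^2(\p_i-\p_j)^t M^tM(\p_i-\p_j)=(\p_i-\p_j)^2$ for every edge $\{i,j\}$; the cleanest way is to take $M$ complementary to $L$ in the sense $L^tL+M^tM=I$ on the linear span of the $\p_i-\p_j$, which is possible precisely when $L^tL\preceq I$ there, and then to absorb the general case by scaling $\tilde\q$ (replacing it by $(1/s)\tilde\q$ for large $s$, which only moves $f(\tilde\q)$ along the ray and does not change which face it lies in, since faces of a cone are themselves cones). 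With such $M$ and $\theta=\pi/4$, or more generally with the scaling absorbed, the combined configuration is equivalent to $\p$, so $f(\p)$ is a convex combination of $f(\tilde\q)$ and $f(\r)$, and a short argument (both coefficients positive, and $\tilde\q\ne$ a rescaling that would collapse the segment) shows $f(\p)$ is in the \emph{relative interior} of the segment. Hence $f(\tilde\q)\in F(f(\p))$.

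Having shown $F(f(\p))\supset f(\A(\p))$, the second assertion $f^{-1}(F(f(\p)))\supset \A(\p)$ is immediate: if $\q\in\A(\p)$ then $f(\q)\in f(\A(\p))\subset F(f(\p))$, so $\q\in f^{-1}(F(f(\p)))$. I expect the main obstacle to be the bookkeeping in the complementary-spaces construction — specifically, handling the case $L^tL\not\preceq I$ by the rescaling trick and verifying carefully that rescaling $\tilde\q$ along its ray genuinely does not change its membership in the face $F(f(\p))$ (this uses that $\CalM$ is a cone and $F(f(\p))$ is a subcone, together with the fact that $f(\p)$ lying in the relative interior of $[z',z]$ is preserved when $z$ is moved along its own ray after a compensating adjustment of $z'$ and the combination weights). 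Once that point is pinned down, the argument is a direct parallel of the proof of Lemma \ref{lem:faceInAffine} run in the opposite direction, using Theorem \ref{thm:affineness} only implicitly through the affine-image structure.
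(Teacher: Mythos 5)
Your argument is correct, but it takes a more hands-on route than the paper. The paper's proof is a two-line appeal to Lemma \ref{lem:metric-universality}: applied to the affine set $\A(\p)$, that lemma already says $f(\A(\p))$ is convex and that $f(\p)$ lies in its relative interior (since $\p$ is universal for its own set of affine images), so the containment $F(f(\p)) \supset f(\A(\p))$ drops out of the definition of $F(f(\p))$ as the \emph{largest} convex subset of $\CalM$ with $f(\p)$ in its relative interior. You instead work from the segment characterization of the face and, for each $\tilde\q = a(\p)$, explicitly build the companion configuration $\r$ via a complementary linear map $M$ with $L^tL + M^tM$ proportional to the identity on the span of the differences, using the cone structure to rescale $\tilde\q$ when $L^tL$ is too large. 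This is essentially a re-derivation, for the special case at hand, of the convexity-plus-relative-interior facts that Lemma \ref{lem:metric-universality} already packages; it is valid, and it has the virtue of making the geometry concrete and of running exactly parallel (in reverse) to the proof of Lemma \ref{lem:faceInAffine}, but it costs you the bookkeeping you flag (note the small normalization slip: with weights $(\cos\theta)^2 = (\sin\theta)^2 = \tfrac12$ you need $L^tL + M^tM = 2I$ on the relevant span, not $=I$; your rescaling trick absorbs this, so nothing breaks). One further observation both proofs share: the hypothesis of dimensional rigidity is never actually used in this direction of the containment --- it is only needed for Lemma \ref{lem:faceInAffine}.
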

%%%%%%%%%%%
\begin{proof}
From Lemma~\ref{lem:metric-universality},
we know that  $f(\A(\p))$ is convex with $f(\p)$ in its
relative interior.
Thus from the definition of a face, we have
$F(f(\p))) \supset f(\A(\p))$.
$\Box$ \end{proof}

%%%%%%%%%%%
\begin{corollary}
If $(G,\p)$ is dimensionally rigid and the configuration $\q$ is
a non-singular affine image of $\p$, then $(G,\q)$ is
dimensionally rigid as well.
\end{corollary}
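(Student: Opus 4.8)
The plan is to leverage the two preceding lemmas, which together establish that for a dimensionally rigid framework $(G,\p)$, we have the equality $f^{-1}(F(f(\p))) = \A(\p)$, equivalently $F(f(\p)) = f(\A(\p))$. The goal is to deduce dimensional rigidity of $(G,\q)$ when $\q$ is a non-singular affine image of $\p$. First I would observe that since $\q$ is a non-singular affine image of $\p$, the two configurations have the same affine span dimension $d$, and moreover $\A(\q) = \A(\p)$ as sets of configurations: every affine image of $\q$ is an affine image of $\p$ (compose the affine maps), and conversely, since the affine map $\p \mapsto \q$ is non-singular on the affine span, it is invertible there, so every affine image of $\p$ is an affine image of $\q$. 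Hence $f(\A(\q)) = f(\A(\p))$.

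Next I would note that $f(\p) = f(\q)$ is false in general, but what matters is the \emph{face}: since $\q \in \A(\p)$ and $\q$ is non-singular, $\q$ is itself a universal configuration for $\A(\p)$, i.e.\ $\q \in \mathring{\A}(\p)$. By Lemma~\ref{lem:metric-universality}, $f(\mathring{\A}(\p))$ is the relative interior of $f(\A(\p))$, so $f(\q)$ lies in the relative interior of $f(\A(\p))$. Since $f(\A(\p)) = F(f(\p))$ is a face of $\CalM$, and $f(\q)$ is in its relative interior, we get $F(f(\q)) = F(f(\p))$ — a point in the relative interior of a face has that face as its own face. In particular $F(f(\q)) = f(\A(\p)) = f(\A(\q))$.

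Now I would invoke Theorem~\ref{thm:affineness}'s consequence in reverse. To show $(G,\q)$ is dimensionally rigid, take any $(G,\q')$ equivalent to $(G,\q)$; I must show $\q'$ has affine span of dimension at most $d$. The measurement point $f(\q')=f(\q)$, so certainly $f(\q') \in F(f(\q)) = f(\A(\q))$. I want to upgrade this to the statement that $\q'$ itself is an affine image of $\q$. This is exactly the content that would follow from $f^{-1}(F(f(\q))) \subset \A(\q)$; but to get that inclusion I would rerun the argument of Lemma~\ref{lem:faceInAffine} with $\q$ in place of $\p$ — which requires knowing $(G,\q)$ is dimensionally rigid, which is what we are trying to prove. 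So a cleaner route is: since $\q' $ is equivalent to $\q$ and $\q$ is equivalent to $\p$ (being an affine image, affine images preserve... no — affine images need not preserve edge lengths).

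Let me correct the last step, which is the real obstacle. The point is that $\q$ is a \emph{non-singular affine image}, not necessarily an isometric one, so $(G,\q)$ need not be equivalent to $(G,\p)$, and $f(\q) \neq f(\p)$ in general. The fix is to work with $\A(\q)$ directly rather than routing through $\p$'s equivalence class. Given $(G,\q')$ equivalent to $(G,\q)$, consider $\r := $ the inverse non-singular affine image carrying $\q \mapsto \p$ applied appropriately — but $\q'$ is only equivalent to $\q$, not affinely related, so this does not directly produce an equivalent configuration of $\p$. Instead, I would directly repeat the two-complementary-spaces argument of Lemma~\ref{lem:faceInAffine}: if $\q' \notin \A(\q)$, then since $f(\q') \in F(f(\q))$ and $f(\q)$ is in the relative interior of that face, there is $\r$ with $f(\q)$ in the relative interior of $[f(\q'),f(\r)]$, and appropriate scalars give $\tilde{\q} := (\alpha\q',\beta\r)$ equivalent to $\q$ with $\tilde{\q}$ not an affine image of $\q$, hence (since $\q$ and $\p$ have the same affine-image class) not an affine image of $\p$ either; but $\tilde\q$ being equivalent to $\q$ does not make it equivalent to $\p$. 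Therefore the genuinely correct and economical approach is: \emph{dimensional rigidity of $(G,\p)$ is equivalent, by Theorem~\ref{thm:affineness} together with Lemmas~\ref{lem:faceInAffine} and~\ref{lem:AffineInFace}, to the condition that $F(f(\p))$ has affine dimension $D(d+1)$} (the dimension of $f$ restricted to an affine set of configurations with span $d$); since $F(f(\q)) = F(f(\p))$ and $\q$ has the same span dimension $d$, the same numerical criterion holds for $\q$, giving dimensional rigidity of $(G,\q)$. The main obstacle is thus precisely to phrase dimensional rigidity as this intrinsic property of the face $F(f(\p))$ — once that reformulation is in hand, the corollary is immediate because the face and the span dimension are both preserved under non-singular affine images.
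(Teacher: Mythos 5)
You have all the right pieces, but the proof as written does not close, and the place where it fails is the final step. The circularity you worry about (``to get $f^{-1}(F(f(\q)))\subset \A(\q)$ I would have to rerun Lemma~\ref{lem:faceInAffine} with $\q$ in place of $\p$, which presupposes what I am proving'') is illusory. You have already established that $\A(\q)=\A(\p)$ and that $f(\q)$ lies in the relative interior of $f(\A(\p))\subset F(f(\p))$, so that $F(f(\q))=F(f(\p))$. Consequently the inclusion you need is \emph{the same set inclusion} as $f^{-1}(F(f(\p)))\subset \A(\p)$, which Lemma~\ref{lem:faceInAffine} gives you directly from the hypothesis that $(G,\p)$ is dimensionally rigid --- no second application of the lemma to $\q$ is required. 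Concretely: if $\q'$ is equivalent to $\q$, then $f(\q')=f(\q)\in F(f(\p))$, hence $\q'\in f^{-1}(F(f(\p)))\subset\A(\p)=\A(\q)$, so $\q'$ is an affine image of $\q$ and its affine span has dimension at most $d$. This is essentially the paper's own (very short) argument.

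Instead of noticing this, you replace the missing step with the assertion that dimensional rigidity is equivalent to ``$F(f(\p))$ has affine dimension $D(d+1)$,'' and declare the corollary immediate from that. This reformulation is neither proved nor correct as stated: $F(f(\p))$ is a face of the measurement cone $\CalM\subset\R^m$, and its dimension is not $D(d+1)$ (indeed $m$ may be far smaller than $D(d+1)$); the quantity $D(d+1)$ is the dimension of the affine set $\A(\p)$ upstairs in configuration space, not of its image downstairs. The intrinsic reformulation that would do the job is the one in Proposition~\ref{prop:main} ($\A(\p)$ equals the preimage of a face), but even that is unnecessary here. Delete the last third of your argument and finish with the three-line chain above.
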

%%%%%%%%%%

\begin{proof}
Since $\q \in \A(\p)$, then from the above lemmas, we have
$f^{-1}(f(\q)) \in \A(\p)$. But  $\q$ is universal for $\A(\p)$ and so 
$\A(\p)=\A(\q)$, thus making $\q$ dimensionally rigid.
\end{proof}

With the above two Lemmas in mind, we make the following definition:

%%%%%
\begin{definition}We say that the affine set $\A$ is a \emph{$G$-affine set} if
 $\A$  is equal to the pre-image of some face of the measurement set.
\end{definition}
%%%%%

\begin{proposition}\label{prop:main}
A framework $(G,\p)$ is
dimensionally rigid if and only if $\A(\p)$ is a $G$-affine set.
\end{proposition}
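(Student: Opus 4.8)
The plan is to prove the two implications of Proposition~\ref{prop:main} using the face-characterization machinery already assembled in Lemmas~\ref{lem:faceInAffine} and~\ref{lem:AffineInFace}, together with Theorem~\ref{thm:affineness}.

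For the forward direction, suppose $(G,\p)$ is dimensionally rigid. Then Lemmas~\ref{lem:faceInAffine} and~\ref{lem:AffineInFace} together give $f^{-1}(F(f(\p))) = \A(\p)$, and in particular $F(f(\p)) = f(\A(\p))$. So $\A(\p) = f^{-1}(F(f(\p)))$ is exactly the pre-image of a face of $\CalM$, which is the definition of a $G$-affine set. The only subtlety is that the cited lemmas are stated with $f^{-1}(F(f(\p)))$ possibly larger than $f^{-1}(f(\A(\p)))$; one should note that the two inclusions there are between $f^{-1}(F(f(\p)))$ and $\A(\p)$ as subsets of $\Cal$, so no ambiguity arises.

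For the converse, suppose $\A(\p)$ is a $G$-affine set, i.e.\ $\A(\p) = f^{-1}(F(x))$ for some face $F(x)$ of $\CalM$, where we may take $x = f(\q)$ for a universal configuration $\q$ of $\A(\p)$ (any point of $\A(\p)$ maps into $F(x)$, and by Lemma~\ref{lem:metric-universality} the universal configurations map to the relative interior of $f(\A(\p))$, which must then be all of $F(x)$; so $F(x) = F(f(\p))$). Now I want to show $(G,\p)$ is dimensionally rigid. Let $(G,\q')$ be any framework equivalent to $(G,\p)$, so $f(\q') = f(\p) \in F(f(\p))$, hence $\q' \in f^{-1}(F(f(\p))) = \A(\p)$, so $\q'$ is an affine image of $\p$ and therefore has affine span of dimension at most $d$. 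That is exactly dimensional rigidity. The one point requiring care is confirming that $\p$ itself lies in the relative interior of $f(\A(\p))$ so that $F(f(\p))$ is a well-defined face equal to the face appearing in the $G$-affine hypothesis; this again is immediate from Lemma~\ref{lem:metric-universality} once we know $\p$ is universal for $\A(\p)$, which holds by definition of $\A(\p)$ as the set of \emph{all} affine images of $\p$ (so $\p$ has maximal affine span among configurations in $\A(\p)$).

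I expect the main obstacle to be purely bookkeeping: making sure the face $F(f(\p))$, the image $f(\A(\p))$, and the pre-image $f^{-1}(F(f(\p)))$ are correctly identified with one another, and that the definition of $G$-affine set (pre-image of \emph{some} face) is matched with $F(f(\p))$ specifically rather than an unrelated face. The actual geometric content — that equivalent frameworks cannot escape $\A(\p)$ iff $\A(\p)$ is cut out as a face-preimage — is entirely furnished by Theorem~\ref{thm:affineness} and Lemmas~\ref{lem:faceInAffine}–\ref{lem:AffineInFace}, so the proof should be short.
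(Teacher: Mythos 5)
Your proposal is correct and follows essentially the same route as the paper: the forward direction is verbatim the paper's (combine Lemmas~\ref{lem:faceInAffine} and~\ref{lem:AffineInFace} to get $f^{-1}(F(f(\p)))=\A(\p)$), and your converse is just the direct form of the paper's contrapositive argument that an equivalent configuration outside $\A(\p)$ would land in the preimage of every face containing $f(\p)$. The extra identification $F(x)=F(f(\p))$ you carry out is harmless but not needed, since $f(\q')=f(\p)\in F(x)$ already forces $\q'\in f^{-1}(F(x))=\A(\p)$.
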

\begin{proof}
Suppose that $(G,\p)$ is dimensionally rigid.
Then from Lemmas
\ref{lem:faceInAffine} and \ref{lem:AffineInFace}, 
we know $f^{-1}(F(f(\p)))=\A(\p)$, which
is thus a $G$-affine set.

For the other direction, let $F'$ be any face of $\CalM$
containing $f(\p)$.  Then $F(f(\p)) \subset F'$.  If $(G,\p)$ is
not dimensionally rigid, then there is configuration $\q \not\in
\A(\p)$ such that such that $f(\q)=f(\p)$.  Thus $f^{-1}(F(\p))$
is not a subset of $\A(\p)$, and $f^{-1}(F')$ is not a subset of
$\A(\p)$.  So $\A(\p)$ is not a $G$-affine set.
$\Box$ \end{proof}

In summary this says that the face lattice of the measurement set
$\CalM$ exactly corresponds the lattice of $G$-affine sets.
Theorem \ref{thm:main} follows directly. The sequence of faces in
a face flag of $\CalM$ corresponds to an iterated sequence of
$G$-affine sets $\A_i$ cut out by an appropriate stress sequence
$E_i$.

%%%%%%%%%%%%%%%%%%
\section{Relation to Facial Reduction}\label{sect:facial}
%%%%%%%%%%%%%%%%%

Facial reduction is a general technique used in the study of duality
in cone programming~\cite{Borwein-Wolkowicz, Ramana, Pataki}, and here we describe the
translation between that and our exposition here.
In the general setup,
one might have a cone programming problem where the feasible set
is expressed as points $\x \in \R^N$ that are
both in  some convex cone $K \subset \R^N$ and satisfy
an equality constraint, expressed as $\x \in L + \b$, where $L$ is a linear
subpace of  $\R^N$
and $\b \in \R^N$. Let $\x_0$ be in the relative interior of the feasible
set and let $F_{\min} := F(\x_0)$ be its face in $K$.

In the process of facial reduction, we start with $F_0:=K$ and
find a supporting hyperplane $\Omega_1^\perp$ 
whose intersection with  $F_0$ 
is
some subface $F_1$ of $F_0$ such that $  F_1 \supset F_{\min}$.
This can be iterated on any $F_{i-1}$ by finding a hyperplane
$\Omega_i^\perp$ that
supports $F_{i-1}$ and whose intersection with
$F_{i-1}$ is some subface
$F_{i}$
such that $ F_i \supset F_{\min}$.
In each step, we guarantee that we are not excluding
 any part of $F_{\min}$ by ensuring that
$\Omega_i \in (L^\perp \cap \b^\perp)$.
This process is iterated until
$F_i = F_{\min}$.

In the setting of graph embedding, we can think of $K$ as
$S^n_+$,
the cone of n-by-n symmetric PSD matrices.
Any configuration $\p$ can be mapped to its Gram matrix in $K$.
Each affine set $\A$ corresponds to some face of
$S^n_+$. (Note that not every face $F$ of $S^n_+$ corresponds to
an affine set. The face $F$ must include the all-ones matrix so that
its corresponding configuration set is closed under translations in
$\R^D$).

%In the setting of graph embedding, we can think of $K$ as
%$\CalM(\Delta_n)$
%the measurement set of $\Delta_n$, the complete graph on $n$
%vertices. Each face $F$ of $\CalM(\Delta_n)$ corresponds to
%the squared distance
%measurements over all vertex pairs of configurations in some
%affine set $\A$.

The linear constraint $\x \in L+\b$
corresponds to a framework being equivalent to $(G,\p)$.
(The graph $G$ determines the space $L$ and the edge lengths in
$\p$ gives us a $\b$).
The constraint
$\Omega_i \in L^\perp$ 
means that $\Omega_i$ is a stress matrix for $G$ (zero on non edges,
and rows summing to zero).
The constraint
$\Omega_i \in \b^\perp$ 
means that any
$\p$ and any equivalent configuration has zero energy
under the quadratic form defined by $\Omega_i$.
The constraint that $\Omega_i$ supports $F_{i-1}$ corresponds to
$\Omega_i$ being PSD over a corresponding affine set $\A_i$.

Under this correspondence, one can see that our process of
finding iterated affine sets $\A_i$ using iterated stress matrices
$\Omega_i$ corresponds exactly to an application of facial reduction.

We note, that in our exposition, we do not describe the process using
$S^n_+$
at all.
On the one hand,
we describe the affine sets $\A_i$ as subsets
of configuration space (instead of as  faces of $S^n_+$).
On the other hand,
instead of picturing of our stresses $\Omega_i$ as support planes for
$S^n_+$
we work over the measurement set of our graph
$\CalM(G) := S^n_+/L$, which is a linear projection of $S^n_+$.
In this projected picture,
our support planes are orthogonal to the stress vectors $\omega_i$ in $\R^m$.

As described in Section \ref{sect:convexity}, facial reduction ``upstairs" on the cone $K$
(such as $S^n_+$) for the constraint $x \in L+\b$
is exactly mirrored by the facial reduction
``downstairs" on the cone $K/L$ (such as $\CalM$) for the constraint
$x = \b/L$.
%We note, that in our exposition, we do not describe the process using
%$\CalM(\Delta_n)$ at all.
%On the one hand,
%we describe the affine sets $\A_i$ as subsets
%of configuration space (instead of as measurments making up  faces of
%$\CalM(\Delta_n)$).
%On the other hand,
%instead of picturing of our stresses $\Omega_i$ as support planes for
%$\CalM(\Delta_n)$, we work over the measurement set of our graph
%$\CalM(G) := \CalM(\Delta_m)/L$, which is the projection of
%$\CalM(\Delta_n)$ onto the edge set of $G$. In this projected picture,
%our support planes are the stress vectors $\omega_i$ in $\R^e$.

%%%%%%%%%%%%%%%%%%%
\section{Tensegrities}\label{sect:tensegrities}
%%%%%%%%%%%%%%%%%%%

It is also possible to use the ideas here to get a similar complete
characterization of universal rigidity for tensegrity frameworks,
where there are upper and lower bounds (cables and struts) on the member
lengths corresponding to the sign of the rigidifying stresses.  %This
%will be discussed in Section \ref{sect:tensegrities}.

%%%%%%
\begin{definition}Each edge of a graph $G$ is designated as either a \emph{cable}, which is constrained to not get longer in length, or a strut, which is constrained not to get shorter in length, or a bar, which, as before, is constrained to stay the same length.  So when we have a framework $(G,\p)$, where each edge, which we call a \emph{member}, is so designated, we call it a \emph{tensegrity framework}, or simply a \emph{tensegrity}, and we call $G$ a \emph{tensegrity graph}. \end{definition}
%%%%%%

We can then ask whether $(G,\p)$ is locally rigid, globally rigid, or universally rigid.  
For local rigidity and the corresponding concept of infinitesimal rigidity, there is an extensive theory as one can see in \cite{Connelly-What-is,Shaping-space, Roth-Whiteley, Polyhedral-tensegrity, Skelton, Recski, Connelly-Whiteley}, for example.  For global rigidity and universal rigidity, there is a natural emphasis on stress matrices and related ideas. 

%%%%%
\begin{definition}\label{def:tensegrity}
We say that a stress $\omega=(\dots, \omega_{ij}, \dots)$ for a tensegrity graph is a \emph{proper} stress if $\omega_{ij} \ge 0$, when the member $\{i,j\}$ is cable, and $\omega_{ij} \le 0$, when the member $\{i,j\}$ is a strut.  There is no condition for a bar.  
\end{definition}
%%%%%%

Theorem \ref{thm:fundamental} takes on the following form for
tensegrities.  See \cite{Connelly-energy}.
 
 \begin{theorem} \label{thm:fundamental-tensegrity}  Let $(G,\p)$ be a tensegrity framework whose affine span of $\p$ is all of $\R^d$, with a proper equilibrium stress $\omega$ and stress matrix $\Omega$.  Suppose further
\begin{enumerate}
\item \label{condition:positive} $\Omega$ is PSD.
\item \label{condition:rank} The configuration $\p$ is universal with respect to the stress $\omega$.  (In other words, the rank of $\Omega$ is $n - d - 1$.)
\item \label{condition:conic} The member directions of $(G,\p)$ with a non-zero stress, and bars, do not lie on a conic at infinity.
\end{enumerate}
Then $(G,\p)$ is universally rigid.
\end{theorem}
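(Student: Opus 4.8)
The plan is to run the energy argument sketched just after Theorem~\ref{thm:fundamental}, modifying only the one spot where equivalence of $\q$ to $\p$ was used, replacing it by the cable/strut/bar inequalities together with the propriety of $\omega$. Let $(G,\q)$ be any tensegrity framework in some $\R^D \supset \R^d$ that satisfies the tensegrity constraints relative to $(G,\p)$, with configuration matrix $Q$. First I would record two facts that do not use $\q$: since $\omega$ is an equilibrium stress for $\p$ we have $P\Omega=0$, hence $E_{\omega}(\p)=\tr(P\Omega P^t)=0$; and since $E_{\omega}$ is the form $\Omega\otimes I^D$ with $\Omega$ PSD (Condition~\ref{condition:positive}), $E_{\omega}$ is PSD on all of $\Cal$, so $E_{\omega}(\q)\ge 0$.

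The one new step is the term-by-term inequality. Because $\omega$ is a \emph{proper} stress, each summand of $E_{\omega}$ obeys $\omega_{ij}(\q_i-\q_j)^2 \le \omega_{ij}(\p_i-\p_j)^2$: on a cable $\omega_{ij}\ge 0$ while the length does not increase, on a strut $\omega_{ij}\le 0$ while the length does not decrease, and on a bar the two sides coincide. Summing gives $E_{\omega}(\q)\le E_{\omega}(\p)=0$, so in fact $E_{\omega}(\q)=0$. This yields two conclusions. First, $\q$ lies in the zero set, hence the radical, of the PSD form $E_{\omega}$, i.e. $Q\Omega=0$, so $\q$ is in equilibrium with respect to $\omega$; by Condition~\ref{condition:rank} and Definition~\ref{def:conic-infinity}, $\q$ is an affine image of $\p$, say $\q=A(\p)$ with linear part $L$ (so its affine span has dimension at most $d$). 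Second, equality in the summed inequality forces equality in every summand, so $(\q_i-\q_j)^2=(\p_i-\p_j)^2$ for every member carrying a non-zero stress, and trivially for every bar.

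To finish, write $\q_i-\q_j = L(\p_i-\p_j)$; the previous sentence says $(\p_i-\p_j)^t(L^tL-I)(\p_i-\p_j)=0$ for every stressed member and every bar. If $L^tL-I\neq 0$, then $L^tL-I$ is a non-zero symmetric matrix vanishing on all those member directions, i.e.\ they lie on a conic at infinity, contradicting Condition~\ref{condition:conic} (cf.\ Proposition~\ref{prop:affine-flex}). Hence $L^tL=I$, so $A$ restricted to the affine span of $\p$ is an isometry and extends to a Euclidean congruence of $\R^D$; thus $\q$ is congruent to $\p$, and $(G,\p)$ is universally rigid. The only point beyond the bar case is the sign bookkeeping in the term-by-term inequality, and the attendant observation that the conic hypothesis need only be imposed on the stressed members together with the bars — which is exactly why Condition~\ref{condition:conic} is stated in that form; I expect this to be the one place requiring care, the rest being a routine transcription of the proof of Theorem~\ref{thm:fundamental}.
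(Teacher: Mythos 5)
Your proof is correct and follows exactly the energy argument that the paper sketches after Theorem~\ref{thm:fundamental} and attributes to \cite{Connelly-energy} for the tensegrity case: the proper-sign term-by-term inequality forces $E_{\omega}(\q)=0$, PSD-ness puts $\q$ in the radical so that universality gives an affine image, and equality in each stressed summand plus the conic condition rules out a non-orthogonal linear part. No gaps; this is the intended proof.
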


When we draw a tensegrity, cables are designated by dashed line segments, struts by solid line segments, and bars by thicker line segments, as in Figure \ref{fig:tensegrity-examples}.
\begin{figure}[here]
    \begin{center}
        \includegraphics[width=0.8\textwidth]{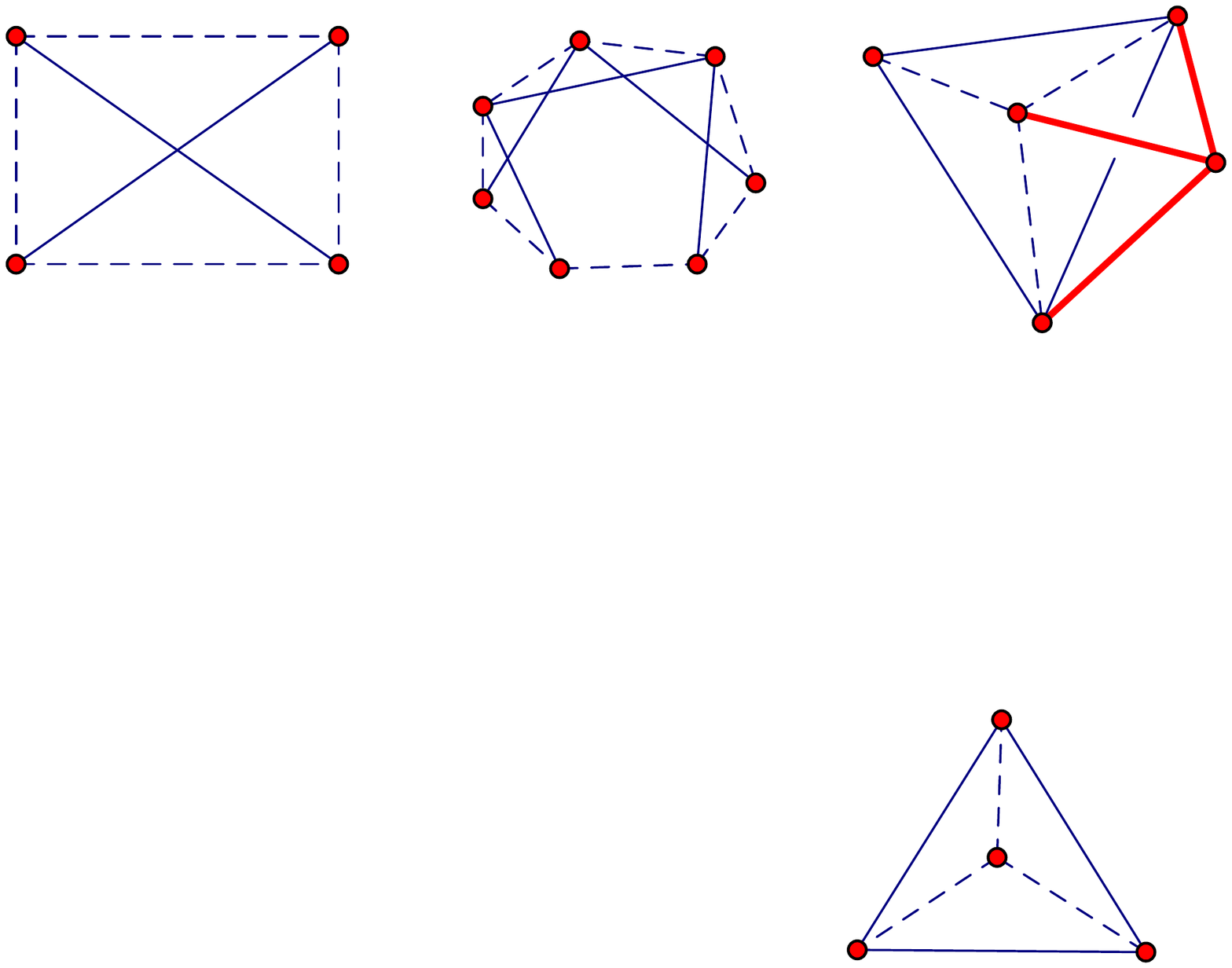}% 
        \end{center}
    \caption{These are three examples of super stable tensegrities.  The one on the left is trivially universally rigid when all the members are bars.  But as a tensegrity it is also super stable, which follows from its rank one equilibrium stress matrix.  The tensegrity in the middle is an example of a Cauchy polygon, one of the class of convex polygonal tensegrity polygons as defined in \cite{Connelly-energy}.  The one on the right has a degree three vertex attached by bars to another super stable planar tensegrity in $\R^3$.  The bars must have zero stress, but in order to insure that there is no affine motion, the bar directions must be included in the directions that are to avoid the conic at infinity.}
    \label{fig:tensegrity-examples}
    \end{figure}

%%%%%%%%%%%%%%%%%%%
\section{Iterated stresses for tensegrities}\label{sect:iterated-tensegrities}
%%%%%%%%%%%%%%%%%%%

For the case of tensegrities, the iterated case is similar.  

%%%%%
\begin{definition}We say that a tensegrity $(G,\p)$ in $\R^d$ is \emph{dimensionally rigid}, if any other configuration $\q$ in any $\R^D$, satisfying the member constraints of $G$ has an affine span of dimension $d$ of less. 
\end{definition}
%%%%%% 
 
 \begin{theorem}\label{thm:main-tensegrity}
 Let $(G,\p)$ be a tensegrity in $\R^d$, where $\p$ has an affine span of dimension $d$. Suppose
 $\Cal = \A_0 \supset \A_1 \supset \A_2 \supset \dots \A_k$
is an iterated affine set with $\p \in \A_k$ with an associated iterated proper PSD stress. 
If the dimension of $\A_k$ is $(d+1)D$,
then $\p$ is dimensionally  rigid
in $\R^d$. 

Conversely, if $(G,\p)$ is dimensionally rigid in $\R^d$, then there must
be an iterated affine set with $\p \in \A_k$, $\Dim(\A_k) = (d+1)D$,
with an associated iterated proper PSD stress.

\end{theorem}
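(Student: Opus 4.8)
The plan is to adapt the proof of Theorem \ref{thm:main} essentially verbatim, tracking the extra bookkeeping imposed by the cable/strut sign conditions. For the easy direction, suppose $(G,\p)$ has an iterated affine set $\Cal = \A_0 \supset \dots \supset \A_k$ with $\p \in \A_k$, $\dim(\A_k) = (d+1)D$, and an associated iterated \emph{proper} PSD stress $E_1, \dots, E_k$. Let $\q$ be any configuration in $\R^D$ satisfying the member constraints of $G$. I would argue inductively that $\q \in \A_i$ for each $i$: assuming $\q \in \A_{i-1}$, the form $E_i$ restricted to $\A_{i-1}$ is PSD, so $E_i(\q) \ge 0$; on the other hand, because $\omega_i$ is a proper stress and $\q$ satisfies the cable/strut/bar constraints, the term-by-term comparison $\omega_{ij}((\q_i - \q_j)^2 - (\p_i - \p_j)^2)$ has a sign making $E_i(\q) \le E_i(\p) = 0$ (using that $\p$ is in equilibrium, the standard tensegrity energy inequality from Theorem \ref{thm:fundamental-tensegrity} / \cite{Connelly-energy}). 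Hence $E_i(\q) = 0$, which by the defining property of the iterated stress ($E_i > 0$ on $\A_{i-1} - \A_i$) forces $\q \in \A_i$. Iterating gives $\q \in \A_k$, and since $\p$ is universal for $\A_k$ (as $\dim \A_k = (d+1)D$ is maximal, by Lemma \ref{lem:metric-universality}), $\q$ is an affine image of $\p$ and so has affine span of dimension at most $d$. This proves dimensional rigidity of the tensegrity.

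For the converse, I would mirror the proof of Theorem \ref{thm:main}, but replacing the appeals to Lemma \ref{lem:stress-boundary} with their proper-stress analogues. Starting from $\A_0 = \Cal$, the key observation is that dimensional rigidity of the tensegrity means $f(\p)$ lies in the face $F(f(\p))$ of the \emph{image of the feasible region} — here one works not with the full measurement cone $\CalM$ but with the convex region $\CalM \cap (\text{cable/strut half-space constraints})$, whose supporting hyperplanes at a boundary point are precisely the proper stresses $\omega$ with $\omega f(\p) = 0$. So whenever $f(\p)$ is not in the relative interior of $f(\A_{i-1})$ intersected with the feasibility constraints, one extracts a proper stress $\omega_i \ne 0$ with $\omega_i f(\q) \ge 0$ for all feasible $\q \in \A_{i-1}$, $\omega_i f(\p) = 0$, and $\omega_i f(\q) > 0$ for some $\q \in \A_{i-1}$. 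One then sets $\A_i := \{\q \in \A_{i-1} \mid \omega_i R(\q)\q = 0\}$, which is an affine set by Lemma \ref{lem:radAff}, and the restricted $E_i$ is PSD on $\A_{i-1}$, vanishes on $\A_i$, and is positive on $\A_{i-1} - \A_i$. The sequence terminates by the usual dimension-drop argument, and when it does, $f(\p)$ is in the relative interior of the (constrained) $f(\A_k)$, so $\p$ is universal for $\A_k$; dimensional rigidity then forces $\dim(\A_k) = (d+1)D$ exactly as before.

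The main obstacle, and the only place where the argument is not a mechanical transcription, is establishing the correct convexity picture for tensegrities: I must verify that the set of feasible squared-edge-length vectors is convex and that its boundary supporting hyperplanes correspond exactly to proper stresses, so that the facial-reduction step produces stresses of the right sign at every stage. The cable/strut inequality constraints are linear in the measurement coordinates, so intersecting $\CalM$ (convex by Condition \ref{cond:cone}) with these half-spaces stays convex, and a separating/supporting hyperplane at a boundary point of this intersection is a nonnegative combination of a stress with the half-space normals — i.e., a proper stress. I would state this as a lemma (the tensegrity analogue of Lemma \ref{lem:stress-boundary}) and prove it by the same argument, just carrying the sign constraints through; the rest of the proof of Theorem \ref{thm:main-tensegrity} is then identical to that of Theorem \ref{thm:main}, with "PSD stress" replaced by "proper PSD stress" throughout. $\Box$
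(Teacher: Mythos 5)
Your sufficiency direction is correct and is exactly the paper's (unwritten) argument: PSD-ness of $E_i$ on $\A_{i-1}$ gives $E_i(\q)\ge 0$, while the proper signs together with the cable/strut/bar constraints give the term-by-term bound $E_i(\q)\le E_i(\p)$, and $E_i(\p)=0$ because $\p\in\A_k\subset\A_i$ and $E_i$ vanishes on $\A_i$ (equilibrium of $\p$ is not what is being used there); hence $\q\in\A_i$ at every level and $\q$ is an affine image of $\p$.

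The converse, however, has a genuine gap at precisely the spot you flag as ``the main obstacle.'' You need, at each level, a \emph{single} stress $\omega_i$ that is simultaneously (a) proper and (b) such that $E_{\omega_i}$ is PSD on \emph{all} of $\A_{i-1}$ with $f(\p)$ in its radical. Your extraction only asserts $\omega_i f(\q)\ge 0$ for all \emph{feasible} $\q\in\A_{i-1}$, and your proposed lemma characterizes supporting hyperplanes of the intersection $\CalM\cap\CalP$. Neither delivers (b): a supporting functional of the intersection need only be nonnegative on the intersection, not on $f(\A_{i-1})$ (in the extreme case where the intersection is the single ray through $f(\p)$, every functional vanishing at $f(\p)$ ``supports'' it, with no sign control elsewhere), so the asserted conclusion that ``the restricted $E_i$ is PSD on $\A_{i-1}$'' does not follow. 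The normal-cone decomposition you invoke writes such a functional as $\omega'+\nu$ with $\omega'$ supporting $\CalM$ and $\nu$ in the normal cone of the translated orthant; $\omega'$ has property (b) but need not be proper, $\nu$ is proper but gives no control on $\CalM$, and the sum need not have either property. The correct move, which is what the paper does, is not to support the intersection but to \emph{separate the relative interiors of the two convex sets} $f(\A_{i-1})$ and $\CalP$ --- dimensional rigidity (via the tensegrity analogue of Theorem \ref{thm:affineness}) confines their intersection to $f(\A(\p))$, so the separation theorem applies. Because $f(\A_{i-1})$ is a convex cone through the origin, the separating functional $\omega_i$ satisfies $\omega_i f(\q)\ge 0$ on all of $f(\A_{i-1})$ with $\omega_i f(\p)=0$, giving (b); because it is nonpositive on the translated orthant $\CalP$, its coordinates carry the proper signs, giving (a). With that single replacement, the rest of your iteration (Lemma \ref{lem:radAff}, the dimension-drop termination, and the final universality of $\p$ for $\A_k$) goes through as you describe.
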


\begin{proof} The proof of this is essentially the same as in Section \ref{sect:affine-constraints} for
Theorem \ref{thm:main}.
For the necessity direction, we just need to be careful
to maintain the proper signs for a tensegrity stress.
When a tensegrity is dimensionally rigid, this means that not only
is $f(\p)$ on the boundary of $\CalM$,
but that $\CalP$, the polyhedral cone of
tensegrity constraints of Definition \ref{def:tensegrity} (the squared lengths $e^2_{ij} \le (\p_i-\p_j)^2$, for each cable, and $e^2_{ij} \ge (\p_i-\p_j)^2$, for each strut), is disjoint from $\CalM$, 
except for 
$f(\A(\p))$.
By a standard separation theorem,
we can choose
a hyperplane that  separates the relative interiors of the two convex sets
$\CalP$ and $\CalM$. (See Figure \ref{fig:cone-intersect} in the next section.) This means that the corresponding stress will
be a proper stress for the tensegrity.
It may be the case, that
this hyperplane
contains other points of the boundary of $\CalP$ besides just $f(\p)$,
which means that some of the edges of $G$ will have zero stress
components.
 This argument can be applied at each level of
iteration.

Note once an edge has a non-zero stress component at some level,
this strictness can be maintained
at any subsequent level.
In particular, the stress $\omega_i$ is orthogonal to
the configurations in $f(\A_j)$, thus we can always replace
$\omega_j$, for $j > i$,
with $\omega_i + \omega_j$. So once a member gets stressed, it can remain
stressed from then on.$\Box$
\end{proof}

\begin{figure}[here]
    \begin{center}
        \includegraphics[width=0.5\textwidth]{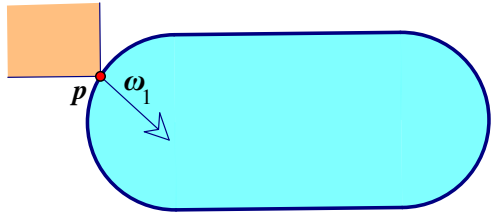}% 
        \end{center}
    \caption{This shows a section of a cone as in Figure \ref{fig:race-track}, but with the rectangular cone given by the cable and strut constraints.  The stress vectors $\omega_1$ determine  the rectangular cone since it is proper.} 
    \label{fig:cone-intersect}
    \end{figure}
    
The major application of this result is the following.

\begin{corollary}\label{cor:main-tensegrity} 
Suppose $\Cal = \A_0 \supset \A_1 \supset \A_2 \supset \dots \A_k$ is
an iterated affine set for a tensegrity $(G,\p)$ with $n$ vertices in
$\R^d$, with an associated iterated proper PSD stress described by 
PSD restricted stress matrices  $\Omega^*_i$.
Let $r_i$ be the rank of 
$\Omega^*_i$. If  (\ref{eqn:affine-rank}) holds, and the
member directions with non-zero stress directions and bars do not lie
on a conic at infinity,   then $(G,\p)$ is universally rigid.

Conversely if $(G,\p)$ is universally rigid in $\R^d$, then there is
an iterated  affine set with an associated
iterated PSD stress determined by proper stresses,
the dimension of $\A_k$ is $(d+1)D$, and the members with non-zero
stress directions and bars do not lie on a conic at infinity.
\end{corollary}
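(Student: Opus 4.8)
The plan is to obtain the corollary from the dimensional--rigidity characterization of Theorem~\ref{thm:main-tensegrity} and the conic--at--infinity analysis, exactly paralleling how Corollary~\ref{cor:main} follows from Theorem~\ref{thm:main} in the bar case. A preliminary remark handles the rank condition: as in the discussion following Corollary~\ref{cor:mainD}, the zero set of $E_i$ in $\A_{i-1}$ corresponds, under the basis matrix $B_{i-1}$, to the kernel of $\Omega^*_i$, so $d_i+1=(d_{i-1}+1)-r_i$; since $d_0+1=n$ (because $\A_0=\Cal$ and $D\ge n$), telescoping gives $d_k+1=n-\sum_{i=1}^k r_i$, which equals $d+1$ if and only if (\ref{eqn:affine-rank}) holds, i.e.\ if and only if $\dim(\A_k)=D(d+1)$.

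For sufficiency, assume (\ref{eqn:affine-rank}) and the conic hypothesis, and let $\q$ in some $\R^D$ satisfy the member constraints of $G$. I would run the energy argument level by level: inductively supposing $\q\in\A_{i-1}$, properness of $\omega_i$ together with the cable/strut/bar constraints gives $E_i(\q)-E_i(\p)=\sum_{jl}(\omega_i)_{jl}\big(|\q_j-\q_l|^2-|\p_j-\p_l|^2\big)\le 0$, while $E_i$ being PSD on $\A_{i-1}$ gives $E_i(\q)\ge 0$; since $\p\in\A_k\subseteq\A_i$ forces $E_i(\p)=0$, we get $E_i(\q)=0$, hence $\q\in\A_i$, and, the summands all being nonpositive, $|\q_j-\q_l|=|\p_j-\p_l|$ for every bar and every member stressed at level~$i$. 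Thus $\q\in\A_k$; since $\dim(\A_k)=D(d+1)$ the configuration $\p$ is universal for $\A_k$, so $\q=A(\p)$ for an affine map $A(\x)=T\x+v$, and the length equalities read $(\p_j-\p_l)^t(T^tT-I)(\p_j-\p_l)=0$ for all bars and all members stressed at some level. As these member directions lie on no conic at infinity — so in particular they span $\R^d$ — we conclude $T^tT=I$ on $\langle\p\rangle$, hence $\q$ is congruent to $\p$, and $(G,\p)$ is universally rigid. (This argument also re-proves dimensional rigidity as a byproduct, so one may instead cite Theorem~\ref{thm:main-tensegrity} for that step.)

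For necessity, universal rigidity implies dimensional rigidity, so the converse half of Theorem~\ref{thm:main-tensegrity} produces an iterated affine set with $\p\in\A_k$, $\dim(\A_k)=D(d+1)$ (equivalently (\ref{eqn:affine-rank})), and an associated iterated proper PSD stress. It remains to guarantee that the bars together with the stressed members avoid every conic at infinity. Here I would use that a universally rigid tensegrity is rigid, hence connected (so its member directions span $\R^d$) and free of non-trivial affine flexes — equivalently, no nonzero symmetric $M$ satisfies $(\p_j-\p_l)^tM(\p_j-\p_l)\le0$ on cables, $\ge0$ on struts, $=0$ on bars. If for the stress just produced the bars-plus-stressed directions happened to lie on a conic $M\ne0$, the affine path $(I+tM)^{1/2}\p$ would keep every $E_i\equiv0$ and preserve exactly the lengths of those members, so it could fail to be an affine flex only through the \emph{unstressed} cables and struts; the fix is to select the iterated proper PSD stress — by a theorem of the alternative applied to the (by rigidity, trivial) cone of affine infinitesimal flexes, in the spirit of Theorem~\ref{thm:Alfakih-stress} but respecting the tensegrity signs — so that the bars together with the stressed members span the space of symmetric $d\times d$ matrices, which is precisely the assertion that their directions lie on no conic at infinity. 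I expect this last coordination between the choice of iterated stress and the conic condition, ensuring that unstressed cables and struts cannot reintroduce an affine flex, to be the main obstacle; everything else is the bookkeeping above and the energy argument already used for Theorems~\ref{thm:main} and~\ref{thm:fundamental-tensegrity}.
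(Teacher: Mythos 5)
Your sufficiency argument is correct and is essentially the paper's: the level-by-level energy argument (properness of $\omega_i$ plus the cable/strut/bar constraints gives $E_i(\q)\le E_i(\p)=0$, PSD-ness on $\A_{i-1}$ gives $E_i(\q)\ge 0$, so each summand vanishes and every stressed member and every bar keeps its length exactly), followed by universality of $\p$ for $\A_k$ and the conic condition to upgrade the resulting affine map to a congruence. The rank bookkeeping $d_k+1=n-\sum_i r_i$ likewise matches the discussion following Corollary \ref{cor:mainD}.

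The gap is exactly where you flag it. In the necessity direction you must exhibit an iterated proper PSD stress for which the \emph{stressed} members together with the bars avoid every conic at infinity, and the stresses produced by Theorem \ref{thm:main-tensegrity} may vanish on some (or all) cables and struts; Figure \ref{fig:one-pole} is the paper's example of precisely this. Your proposed repair --- ``a theorem of the alternative applied to the cone of affine infinitesimal flexes, in the spirit of Theorem \ref{thm:Alfakih-stress} but respecting the tensegrity signs'' --- is left unexecuted, and as stated it does not obviously yield a stress that can be \emph{appended to the iterated sequence}: such an alternative would give sign-correct coefficients $\mu_{jl}$ with $\sum_{jl}\mu_{jl}(\p_j-\p_l)(\p_j-\p_l)^t=0$, but nothing forces this $\mu$ to have zero energy on all of $\A_k$ and to restrict to a PSD form there, which is what the corollary's certificate requires. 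The paper closes the gap with one concrete extra step in the measurement cone: since $\p$ is universal for $\A_k$, the point $f(\p)$ lies in the relative interior of $f(\A_k)$, and universal rigidity forces the tensegrity constraint polyhedron $\CalP$ to meet $f(\A_k)$ only at $f(\p)$; one can therefore choose a hyperplane containing all of $f(\A_k)$ and excluding all of $\CalP$ except $f(\p)$. The corresponding stress is automatically proper, has zero energy on all of $\A_k$ (so it is a legitimate final, or absorbable, level of the iterated stress), and is nonzero on \emph{every} cable and strut; with all members then stressed, universal rigidity itself rules out a conic at infinity. You should replace the ``theorem of the alternative'' sketch with this separation argument, or else supply the missing compatibility of your $\mu$ with the affine set $\A_k$.
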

\begin{proof}
This proof also  follows that of the case of a bar framework.
The only thing new that we need to establish in the
necessity direction is that we will be able to find
\emph{non-zero}
stress values on the
cable and strut edges to certify that they do not lie on a conic at infinity.
The iterated stresses that are guaranteed from the above theorem
need not be non-zero on any particular set of edges (See the example
of Figure \ref{fig:one-pole} below).

To establish this we can use, if needed, one extra stress
beyond that needed to establish dimensional rigidity.
Suppose at the last level of iteration, we have
a sequence of  stresses that
restricts us to frameworks in the affine set $\A_k$, such that
$\p$ is universal for $\A_k$. In this case, we have that
$f(\p)$ is in the relative
interior of $f(\A_k)$.
The assumption of universal rigidity means that
the polyhedral cone $\CalP$ 
is disjoint from $f(\A_k)$, except for the
shared point
$f(\p)$. Since
$f(\p)$ is in the relative
interior of $f(\A_k)$, this means that we can find a hyperplane
that includes $f(\A_k)$ and excludes \emph{all} of $\CalP$ except
for the single point $f(\p)$. The
corresponding stress must have zero energy
for all of $\A_k$ and will have non-zero values on
\emph{all} of the edges. $\Box$
\end{proof}
\begin{figure}[here]
    \begin{center}
        \includegraphics[width=0.5\textwidth]{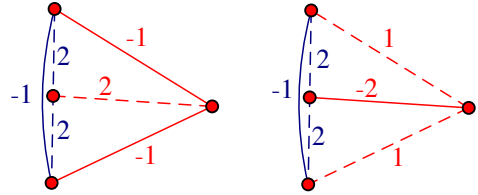}% 
        \end{center}
    \caption{These are two universally rigid tensegrities in the plane.  The signs on the members not on the pole can be reversed and it still remains universally rigid.} 
    \label{fig:one-pole}
    \end{figure}

Figure \ref{fig:one-pole} is an example where one extra
iteration is needed
for universal rigidity 
after the iteration process shows dimensional
rigidity.  There is just one pole, in the plane, and just one vertex
attached to all three vertices.  There are two ways (as shown) to
assign cables and struts to the remaining three members so that there
will be an equilibrium at that vertex.  Both possibilities provide a
universally rigid tensegrity.  At the first level, we can find a rank
$1$ stress on the vertical pole.  This is sufficient to serve as a
certificate for dimensional rigidity. For a bar framework, universal
rigidity follows since the edge directions do not lie at a conic at
infinity.  But for a tensgrity framework, we are not done, since in
that case, the conic test only can use cable and strut edges with
non-zero stress coefficients. As shown in Figure \ref{fig:one-pole},
for this we can use a second level stress that has a constant $0$
energy over $\A_1$.

%%%%%%%%%%%%%%%%%%%
\section{Projective invariance}\label{sect:projective}
%%%%%%%%%%%%%%%%%%%

It is well known that a bar framework $(G,\p)$ is infinitesimally rigid if and only if  
$(G,\q)$ is infinitesimally rigid, where the configuration $\q$ is a non-singular projective image of the configuration $\p$.  See \cite{Connelly-Whiteley, Whiteley-Polarity-I,  Whiteley-Polarity-II} for a discussion of this property.  Infinitesimal rigidity for tensegrities is also projectively invariant, but a cable that ``crosses" the hyperplane at infinity is changed to a strut and vice-versa, because  the sign of the stress changes.  It is also true that any equilibrium stress is also altered by the projective transformation.  Indeed a stress matrix $\Omega$ is replaced by another stress matrix $D\Omega D$, where the matrix $D$ is a non-singular diagonal matrix and comes from the non-singular projective transformation. This transformation preserves the rank and PSD nature  of the stress.  At any subsequent level, we also can set $\Omega_i := D \Omega_i D$ using the same $D$ matrix. The basis matrix, which derives from the kernel is transformed as $B_i \rightarrow B_i D^{-1}$. Thus, the restricted stress matrix, 
$\Omega^*_i :=  B_i D^{-1} (D \Omega_i D) D^{-1} B^t_i$ is not changed at all due the projective transform, thus maintaining its rank and PSD nature.  See \cite{Connelly-Whiteley-coning}, Proposition 7, for this same idea applied to a bar framework.  Thus we get the following result.

%%%%%%%%%%
\begin{theorem} Let $f: \R^d - X \rightarrow \R^d$ be non-singular projective transformation, where $X$ is a $(d-1)$-dimensional affine subspace of $\R^d$, and suppose that for each $i$,  $\p_i \notin X$.  Then for any tensegrity framework, $(G,\p)$ is dimensionally rigid if and only if $(G,f(\p))$ is dimensionally rigid, where the strut/cable designation for $\{i,j\}$ changes only when the line segment $[\p_i, \p_j]$ intersects $X$ and bars go to bars.
\end{theorem}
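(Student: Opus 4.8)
\noindent The plan is to read the equivalence off the characterization of dimensional rigidity in Theorem~\ref{thm:main-tensegrity}: I would show that an iterated affine set together with an associated iterated proper PSD stress for $(G,\p)$ is carried, through the projective change of coordinates, to such a structure for $(G,f(\p))$ with exactly the cable/strut relabelling in the statement. Since $f^{-1}$ is again a projective transformation of the same kind (applied to $f(\p)$), and the relabelling will turn out to be involutive, it suffices to carry the certificate in one direction; the ``only if'' then follows by symmetry.

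I would work in homogeneous coordinates. Let $M\in GL(d+1)$ induce $f$, so $M\hat{\p}_i=\lambda_i\hat{f(\p)}_i$ with $\hat{\p}_i=(\p_i,1)^t$ and $\lambda_i$ the last coordinate of $M\hat{\p}_i$; the hypothesis $\p_i\notin X$ is exactly $\lambda_i\neq0$, and $X=\{\x:\ell(\x)=0\}$ for the affine functional $\ell(\x)=(\text{last coordinate of }M(\x,1)^t)$, so $\lambda_j\lambda_l<0$ precisely when $[\p_j,\p_l]$ crosses $X$. Put $D=\mathrm{diag}(\lambda_1,\dots,\lambda_n)$, a nonsingular $n$-by-$n$ diagonal matrix. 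Given an iterated affine set $\Cal=\A_0\supset\cdots\supset\A_k$ with $\p\in\A_k$, $\Dim(\A_k)=(d+1)D$, and an associated iterated proper PSD stress with stress matrices $\Omega_i$ and basis matrices $B_{i-1}$, I would transform it by $\A_i\mapsto\A_iD^{-1}$, $B_{i-1}\mapsto B_{i-1}D^{-1}$, $\Omega_i\mapsto D\Omega_iD$ (equivalently the stress $\omega_{i,jl}\mapsto\lambda_j\lambda_l\,\omega_{i,jl}$). Several things are then immediate. The restricted stress matrices are unchanged, $(B_{i-1}D^{-1})(D\Omega_iD)(B_{i-1}D^{-1})^t=B_{i-1}\Omega_iB_{i-1}^t=\Omega_i^*$, so every rank $r_i$ and the sum $\sum r_i$ are preserved, and $\Dim(\A_kD^{-1})=\Dim(\A_k)=(d+1)D$ since right multiplication by $D^{-1}$ is a linear automorphism of configuration space. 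The relation $\lambda_j\,f(\p)_j=A_0\p_j+a_0$, with $[A_0\mid a_0]$ the top $d$ rows of $M$, exhibits $f(\p)\cdot D$ as an ordinary affine image of $\p$, hence $f(\p)\cdot D\in\A_k$ and $f(\p)\in\A_kD^{-1}$. And $D\Omega_iD$ is congruent to $\Omega_i$, so it is PSD iff $\Omega_i$ is.

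For the signs, the off-diagonal entries obey $(D\Omega_iD)_{jl}=\lambda_j\lambda_l(\Omega_i)_{jl}$, so the stress on a member $\{j,l\}$ keeps its sign when $[\p_j,\p_l]$ misses $X$ and reverses it when $[\p_j,\p_l]$ crosses $X$; with bars carrying no sign condition this is precisely the relabelling in the statement, so the transformed stresses are proper for $(G,f(\p))$, and Theorem~\ref{thm:main-tensegrity} gives that $(G,f(\p))$ is dimensionally rigid. For the converse I would run the same construction with $M^{-1}$ and the configuration $f(\p)$: the scalars that arise are $\mu_i=(\text{last coordinate of }M^{-1}\hat{f(\p)}_i)=\lambda_i^{-1}$, so $\mu_j\mu_l$ has the same sign as $\lambda_j\lambda_l$, the relabelling induced by $f^{-1}$ undoes the one induced by $f$, and $(G,\p)$ is recovered with its original cable/strut designations.

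The step I expect to be the main obstacle is verifying that the transformed data really is an iterated proper PSD stress in the sense required by Theorem~\ref{thm:main-tensegrity}, i.e.\ that each transformed level is an honest stress energy of the form~(\ref{eqn:stress-energy}) with the required semidefiniteness and vanishing properties. The subtlety is that for $i\geq2$ the stress $\omega_i$ need not be an equilibrium stress for $\p$ (in examples such as Figure~\ref{fig:general-position} it cannot be), so $D\Omega_iD$ need not be row-sum-zero and hence is not literally a stress matrix. The remedy is to use instead the honest stress $\omega_{i,jl}\mapsto\lambda_j\lambda_l\omega_{i,jl}$, whose row-sum-zero stress matrix $\Omega_i'$ has the same off-diagonal entries as $D\Omega_iD$, and to check that $\Omega_i'$ and $D\Omega_iD$ define the same quadratic form once restricted to $\A_{i-1}D^{-1}$; their difference is a diagonal form, and showing it vanishes there uses that $\p$ lies in the radical of $E_{\omega_i}$ restricted to $\A_{i-1}$ (so that $\Omega_i\ell\in\ker B_{i-1}$), together with the bookkeeping device from the proof of Theorem~\ref{thm:main-tensegrity} of replacing $\omega_j$ by $\omega_i+\omega_j$ for $j>i$, which keeps a member stressed once it has become stressed and so preserves strict positivity and the proper signs after the correction. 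Level $i=1$ is clean, since there $\omega_1$ is a genuine PSD equilibrium stress for $\p$ and $D\Omega_1D$ is already row-sum-zero; and the remaining checks (that $f(\p)$ still has $d$-dimensional affine span, because $M$ is invertible, and that $\Omega_i'$ is still edge-supported, because $D$ is diagonal) are routine.
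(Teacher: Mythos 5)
Your proposal follows the same route as the paper: the paper's entire argument is the informal paragraph preceding the theorem, which sets $\Omega_i \mapsto D\Omega_i D$, $B_i \mapsto B_i D^{-1}$, observes that the restricted stress matrices $\Omega^*_i = B_{i-1}\Omega_i B_{i-1}^t$ are literally unchanged, and reads off preservation of rank, semidefiniteness, and (via the sign flip $\omega_{ij}\mapsto \lambda_i\lambda_j\omega_{ij}$) the cable/strut relabelling. Your treatment of level $1$, of the sign bookkeeping, and of the involutivity of the construction all match this and are fine.

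The step you yourself single out as the main obstacle is, however, exactly where your argument (and, to be fair, the paper's sketch) does not close. For $i\ge 2$ the off-diagonal data $\lambda_j\lambda_l\,\omega_{i,jl}$ determines an honest row-sum-zero stress matrix $\Omega_i'$ whose energy satisfies $E_{\omega_i'}(\q) = \tr\bigl((QD)\,\Omega_i\,(QD)^t\bigr) - \sum_j \lambda_j\mu_j\,|\q_j|^2$ with $\mu := \Omega_i\lambda^t$, so on $\A_{i-1}D^{-1}$ the transformed energy differs from the (correctly PSD) form $\tr(E\,\Omega_i^*\,E^t)$ by the diagonal form $\sum_j (\mu_j/\lambda_j)\,|\r_j|^2$ evaluated at $\r = \q D \in \A_{i-1}$. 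You propose to kill this using $B_{i-1}\mu = 0$, which does follow from $\p$ lying in the radical of $E_{\omega_i}$ restricted to $\A_{i-1}$. But $B_{i-1}\mu = 0$ only says that the symmetric matrix $T := B_{i-1}\,\mathrm{diag}(\mu_j/\lambda_j)\,B_{i-1}^t$ annihilates the single vector $c^t$ with $\lambda^t = B_{i-1}^t c^t$; vanishing of the correction on all of $\A_{i-1}$ requires $T=0$, which is $\binom{d_{i-1}+2}{2}$ conditions, not one. The replacement $\omega_j \mapsto \omega_i + \omega_j$ does not help here either: it is a device for keeping members stressed (it leaves each restricted energy on $\A_{j-1}$ unchanged), and while it does alter $\mu$, there is no argument that the available combinations can force $T=0$. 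So as written, the transformed data is not verified to be an iterated PSD stress in the sense required by Theorem \ref{thm:main-tensegrity}, and the "if and only if" is not yet established; you would either need to show that the $\omega_i$ can be chosen at each level so that this diagonal discrepancy vanishes on $\A_{i-1}$ (e.g., by arranging $\Omega_i\lambda^t=0$ outright), or abandon the certificate-transport argument at levels $i\ge 2$ in favor of a direct argument on the face lattice of $S^n_+$ under the automorphism $X\mapsto D^{-1}XD^{-1}$, as in Section \ref{sect:facial}.
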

%%%%%%%%%%%%%%

It is not always true that the universal rigidity of a bar framework is projectively
invariant.  For example, the \emph{orchard ladder}, narrower at the top than at the bottom, as in Figure \ref{fig:orchard-ladder}, is universally rigid, whereas the straight ladder of Figure \ref{fig:ladder} below, a projective image, is flexible in the plane.  
\begin{figure}[here]
    \begin{center}
        \includegraphics[width=0.3\textwidth]{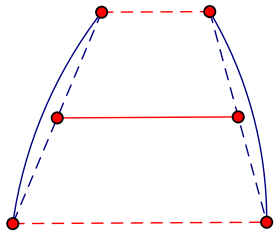}% 
        \end{center}
    \caption{This is an example of a universally rigid framework, but the framework of Figure \ref{fig:ladder} below is a projective image that is not universally rigid.  The two poles on the sides are collinear triangles.}
    \label{fig:orchard-ladder}
    \end{figure}

%On the other hand, dimensional rigidity is a projective invariant, since all the corresponding stress matrices involved have the same rank and signature.  The only reason that the projective image of the configuration of universally rigid framework is not rigid is when the stressed directions an bars end up  being on a conic at infinity and an affine motion exists.  A similar statement holds for tensegrities, with due regard to the member sign change (and cable-strut interchange).

%%%%%%%%%%%%%%%%%%%
\section{Calculation methods}\label{sect:methods}
%%%%%%%%%%%%%%%%%%%

We test for dimensional rigidity of $(G,\p)$ by finding the maximal dimension
of any framework $(G,\q)$ that is equivalent to $\p$. This is done by building
up a maximal iterated affine set with an associated iterated PSD stress
as guaranteed by Corollary \ref{cor:mainD}. To do this calculation, we
always maintain a basis matrix $B_i$, where at the start,
$B_0 = I$.

Given $B_{i-1}$ we perform the following steps.

{\bf Find the next stress:}
Look for a matrix $\Omega_i$ such that the restricted stress matrix,
$\Omega^*_i := B_{i-1} \Omega_i B_{i-1}^t$, 
is non zero, PSD and such that
the ``energy" linear constraint
$\p^t (\Omega_i \otimes I^D) \p=0$ holds.
If there is no such solution we are done with the iteration.

\begin{definition}
Given an affine set $\A_{i-1}$ described by a basis matrix $B_{i-1}$.
We say that a restricted stress matrix 
$\Omega^*_i = B_{i-1} \Omega_i B_{i-1}^t$, is
a \emph{restricted equilibrium stress matrix} for $\p$ if 
$P \Omega_i B_{i-1}^t = 0$ holds for $\Omega_i$.
\end{definition}

For any stress matrix $\Omega_i$ that satisfies the energy constraint
and such that the restricted stress matrix, $\Omega^*_i$, is PSD,
we also see that $\Omega^*_i$ must be a restricted equilibrium matrix for
$\p$.
Since we want to get the most milage out
of our linear constraints, we replace the the energy constraint with
this  constraint, which we call a \emph{restricted energy constraint}.

The resulting
problem can be posed as an SDP feasibility problem. If possible
we would like to avoid using an SDP solver, since that is not only expensive,
but, as a numerical algorithm, only approaches, and never exactly hits,
a feasible solution. We discuss this issue more below in Section \ref{sect:computation}.

Sometimes, we can avoid calling an SDP solver
by simply looking at the problem and guessing the correct
$\Omega_i$. For example, if we see, within some two-dimensional
framework,
a degenerate triangle,
(which we will call a pole), it is self evident how to stress that
subgraph.

Another easy case arises when the 
is when the space of solutions for 
$\Omega^*_i$ 
is
only one dimensional. In this case, there is no need to
search for PSD solutions, one only needs to pick one solution
$\Omega^*_i$ and check its eigenvalues.
If it is postive semi-definite, then we have succeeded.
If it is negative semi-definite, then we can negate the matrix, and
we have succeeded. If it is indefinite, then there is no such solution
and we are done with the iteration.

An even easier sub-case of this 
is when the space of 
$\Omega^*_i$ is not only one-dimensional,
but also that the maximal rank of these matrices is $1$. Then we know
immediately that $\Omega^*_i$ is semi-definite.

{\bf Update the basis:}
Given $B_{i-1}$ and a stress $\Omega_i$ we need to update
the basis.
We do this by finding a maximal set of
linearly independent
row vectors of length $n$ that
 is in the row span of $B_{i-1}$, and such that
each of these vectors is in the co-kernel of
$\Omega_i B_{i-1}^t$.

These vectors form the rows of our new basis, $B_i$.
We then
continue the iteration.

{\bf When the iteration is done:}
We simply count the number of rows of the final $B_k$,
which we call $d_k +1$.
If $d_k$ equals $d$, the dimension of the affine span of $\p$,
then we have produced a
certificate  that $\p$ is dimensionally rigid.
Otherwise, we have found a higher dimensional affine set that includes
frameworks equivalent to $\p$ and we have a certificate that
$\p$  is not dimensionally rigid.

%%%%%%%%%%%%%%%%%%%
\section{Examples}\label{sect:examples}
%%%%%%%%%%%%%%%%%%%

%%%%%%%%%%%%%%%%%%%
\subsection{The ladder}\label{subsect:ladder}
%%%%%%%%%%%%%%%%%%%
\begin{figure}[here]
    \begin{center}
        \includegraphics[width=0.4\textwidth]{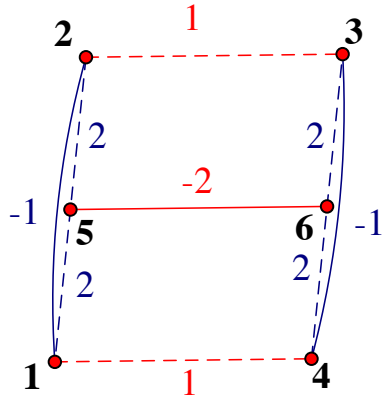}% 
        \end{center}
    \caption{This shows a framework with two collinear triangles, each of which provides an affine relation on the space of configurations of the framework $(G,\p)$.  The stresses are indicated and the member connecting the external vertices of poles is indicated by a curved arc. This framework is dimensionally rigid in the plane, but it is not universally rigid, since it has an affine flex in the plane, and since there are only two member directions.  The vertices are labeled in bold.}
    \label{fig:ladder}
    \end{figure}

We first show the process described in Section \ref{subsect:basis-matrix} and Section \ref{sect:methods} applied to the example in Figure \ref{fig:ladder}.  The first level stress matrix, using just the stresses on the vertical members of the ladder, is the following:
\begin{equation*}
\Omega_1 = \begin{pmatrix}
~1 & ~1 	& ~0 & ~0 & -2 & ~0\\
~1  & ~1 	& ~0 & ~0 & -2 & ~0\\
~0  & ~0 & ~1 & ~1 & ~0 & -2\\
~0 & ~0 	& ~1 & ~1 & ~0 & -2\\
-2 & -2 & ~0 & ~0 & ~4 & ~0\\
~0  & ~0 	& -2 & -2 & ~0 & ~4
\end{pmatrix}.
\end{equation*}
This matrix has rank $r_1=2$, a $4$-dimensional kernel, and $d=2$.  The kernel of this matrix defines the affine set $\A_1$.  A basis matrix for $\A_1$ is
\begin{equation*}
B_1 = \begin{pmatrix}
~1 & ~0 	& ~0 & ~0 & 1/2 & ~0\\
~0  & ~1 	& ~0 & ~0 & 1/2 & ~0\\
~0  & ~0 & ~1 & ~0 & ~0 & 1/2\\
~0 & ~0 	& ~0 & ~1 & ~0 & 1/2
\end{pmatrix}.
\end{equation*}

At the second level, we enforce the restricted
equilibrium constraint and find
that the possible candidates for $\Omega^*_2$ must be up to scale, equal to
 
\begin{equation*}
 B_1\Omega_2 B_1^t =  \Omega^*_2=\begin{pmatrix}
~1 & -1 	& ~1 & -1 \\
-1 & ~1 	& -1 & ~1  \\
~1 & -1 	& ~1 & -1 \\
-1 & ~1 	& -1 & ~1 
\end{pmatrix}
\end{equation*}

These are rank 1 with positive trace and thus positive
semi-definite. This $\Omega^*_2$ has an assocated second level
stress $\Omega_2$ where $\omega_{14}=\omega_{23}=1$
and $\omega_{56}=-2$ as in Figure \ref{fig:ladder}. We have $\rank \Omega_1 +\rank \Omega^*_2 = 3 =n-d-1$, making the ladder dimensionally rigid.

%%%%%%%%%%%%%%%%%%%
\subsection{The $4$-pole Example}\label{subsect:4-pole}
%%%%%%%%%%%%%%%%%%%

\begin{figure}[here]
    \begin{center}
        \includegraphics[width=0.6\textwidth]{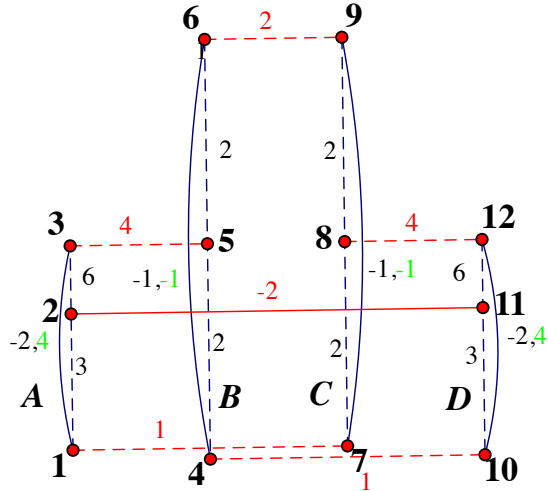}% 
        \end{center}
    \caption{This is a framework that is dimensionally rigid in the plane.  Each set of three (nearly) vertical line segments are considered to be a collinear triangle, while the other horizontal members are connected as shown.  This involves at least three levels of iteration as described Section \ref{sect:affine-constraints}, where the levels, in order, are dark blue, red, green.} 
    \label{fig:4-poles}
    \end{figure}
    
 Consider the configuration shown in Figure \ref{fig:4-poles} with four vertical parallel line segments, the \emph{poles}, where each pole is connected to the other three by horizontal members.
 
 The poles are labeled $A, B, C, D$, and the vertices are simply labeled by their number, $1, \dots, 12$. The horizontal spacing between the $AB, BC$, and $CD$ poles is equal.  The vertical spacing of the horizontal members is such that the distance between the $2-11$ line and the $1-7$ line is twice the distance between the $2-11$ line and the $3-5$ line.   The $5$ vertex is the midpoint of the $B$ pole, and the $8$ vertex is the midpoint of the $C$ pole.
 The stresses on this framework are as indicated.  These are simply arranged so that the lever arm moments are all $0$.  The question is whether the appropriate stress matrices are PSD of the right rank.  
 
 The stress for each pole is rank one and they can all be combined to one rank $4$ stress, which can be considered as a stress at the first level.  It is simply the certificate, in any equivalent framework, that each pole remains straight maintaining the ratio of each of the lengths.  The stress for each of those members is proportional to the reciprocal of its length in absolute value.  The stress for the longest member of each collinear triangle is negative, while the other two are positive.  
 
% \begin{figure}[here]
  %  \begin{center}
    %    \includegraphics[width=0.5\textwidth]{2nd-stage-rigid}% 
    %    \end{center}
  %  \caption{This is the result when a new basis is chosen for calculating the second-stage, where the corresponding stress is indicated.  This is the framework $G_2(\p_1, \p_3,\p_4,\p_6,\p_7,\p_9,\p_{10},\p_{12})$.}
  %  \label{fig:2nd-stage-rigid}
 %   \end{figure}

 One can then choose a basis for $\A_1$ and search for the 
restricted equilibrium matrices as in Section \ref{sect:methods}. It turns out that, as in the ladder example, the space of a possible
equilibrium $\Omega^*_2$ is only one-dimensional, and these
have rank $4$. We check and find that these matrices are semi-definite.
An associated $\Omega_2$ is shown in red in Figure \ref{fig:4-poles}. 
We then choose a basis for $\A_2$ and use the methods of Section \ref{sect:methods} one final time. Again, we find a one-dimensional space
of equilibrium matrices $\Omega^*_3$, and these have rank $1$.
An associated $\Omega_3$ can be constructed with
$\omega_{1,3} = \omega_{10,12} = 4$
and 
$\omega_{4,6} = \omega_{7,9} = -1$.

The sum of the ranks  is $4 + 4 +1 = 9 = 12 - (2 + 1)  = n - ( d+1)$, so this framework is dimensionally rigid in the plane.
It is not universally rigid since the original framework has only
two member directions.%Shlomo:  The framework in Figure 7 has 2 stresses, and presumably some of these stresses are max rank 5 PSD.

One interesting feature of this example is that
the stress $\Omega_2$ involves all of the vertices of the graph $G$ from the second level, and yet it still needs another level for the complete analysis of its dimensional rigidity.  %The framework of Figure \ref{fig:2nd-stage-rigid} also has a $2$-dimensional space of equilibrium stresses, and there is another  equilibrium stress whose stress matrix is PSD.  But for our analysis, we need to use the stress coming from the one in Figure \ref{fig:4-poles}.

The first stage in this example involves only the four collinear triangles, which imply the corresponding affine constraints on the the configuration.  Suppose one initially starts with those four affine constraints and then proceeds with the analysis, where the distance constraints on the poles is dropped?  It turns out that the configuration is not dimensionally rigid in the plane, since at the third level the member constraints in the poles are needed again.  The maximal dimensional realization, in that case, is $\R^3$.    

%%%%%%%%%%%%%%%%%%%
\subsection{The $4$-pole Extended Example}\label{subsect:4-pole-extended}
%%%%%%%%%%%%%%%%%%%

\begin{definition}A \emph{spider web} is a tensegrity, where some subset of the vertices are fixed, and all the members are cables.
\end{definition}
%%%%

  For a spider web, it was shown in \cite{Connelly-energy} that it is locally rigid if and only if it is universally rigid, and that when it is universally rigid the iterated construction simplifies to a sequence of proper subgraphs, where the number of vertices decreases at each stage as in Figure \ref{fig:iterated-rigid}.  Another  example of the iteration process is shown in Figure \ref{fig:general-position}, where the vertex $A$ is added at the second stage.  In each of those examples, there is a proper subgraph that is universally rigid on its own without using the presence of the other vertices.   
  
  Figure \ref{fig:4-pole-extended} shows that, in general, when the framework is universally rigid in more than one step of the iteration, there may be no proper sub-framework that is universally rigid on its own.  The stresses at each level are shown.
   \begin{figure}[here]
    \begin{center}
        \includegraphics[width=0.9\textwidth]{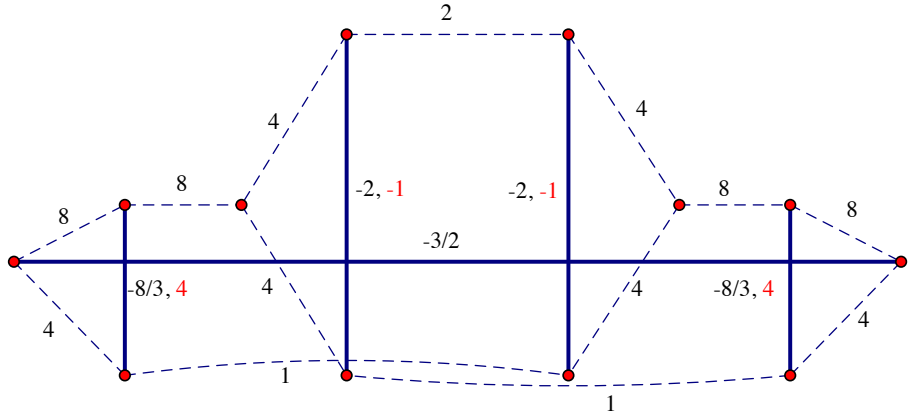}% 
        \end{center}
    \caption{This is an example of a universally rigid tensegrity framework in the plane that has only one stress that is PSD of rank $8$, one less than the maximal possible $n -d-1=12 - 2 -1 =9$.  There is a stress at the second stage which is PSD of rank one in the affine set defined by the stress at the first stage.  The vertices of this configuration are the same as those in Figure \ref{fig:4-poles}, except the interior point of each pole has been moved half the distance (left or right as indicated) between adjacent poles.}
    \label{fig:4-pole-extended}
\end{figure}

This is a perturbed version of Figure  \ref{fig:4-poles}, and it turns out to be universally rigid by the process described here, but using only two stages instead of three as in Subsection \ref{subsect:4-pole}.  Since the stressed members have more than two directions in the plane, and since it is dimensionally rigid in the plane as with Figure  \ref{fig:4-poles}, it is universally rigid.

In both of these cases, we were able to find the certifying sequence of
stresses without calling a PSD solver. This was because, at each 
step, there was only a one-dimensional space of 
restricted equilibrium matrices
$\Omega^*$ as candidates. Since they were rank $1$, we automatically
knew that they were semi-definite, and for the second step, for the $4$ poles,
we just checked that it was of rank $4$.

More generally, if we end up with a higher dimensional space of 
equilibrium $\Omega^*$ as candidates, we might have a harder time
determining if that space includes a positive semi-definite one.
We discuss this more below in Section \ref{sect:computation}.

\subsection{A hidden stress}\label{subsect:hidden}

One of the problems with SDP is finding even one PSD equilibrium
stress (or more generally restricted  equilibrium stresses at later
stages).  The following example is a framework, where the dimension of
PSD equilibrium stresses is a low dimensional subcone of the space of
all equilibrium stresses.

\begin{figure}[here]
    \begin{center}
        \includegraphics[width=0.4\textwidth]{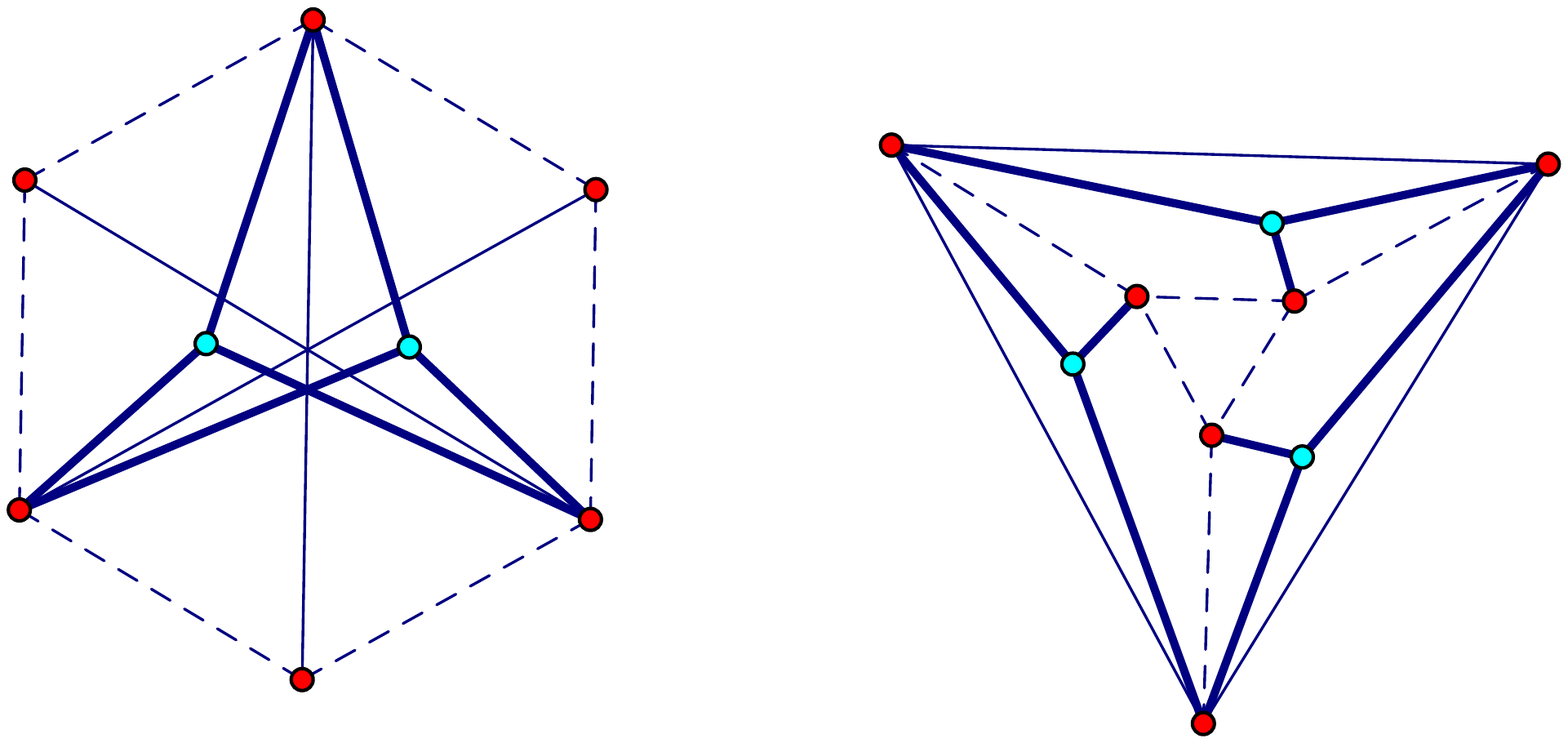}% 
        \end{center}
    \caption{This is an example of a universally rigid bar framework in the plane that has a three-dimensional space of equilibrium stresses but only a one-dimensional space that is PSD.}
    \label{fig:non-open}
\end{figure}

 The two triangles and the members joining corresponding vertices constitute a super stable PSD subframework as in Figure \ref{fig:general-position}.  Since the whole (bar) framework is infinitesimally rigid in the plane, and that there are $18$ members and $9$ vertices, the dimension of the stress space is $18-2\cdot 9 + 3=3$.  Equilibrium at each blue vertex implies that the three stresses at a blue vertex must all have the the same sign.  But any equilibrium stress, non-zero on any of the members adjacent to the blue vertices, cannot be all have the same sign for all the members adjacent to all the blue vertices.  This is because the twisting infinitesimal motion of the inner triangle relative to the outer triangle either decreases all the members adjacent to the blue vertices or increases them all.  So one of the set of three members adjacent to a blue vertex has to have all negative stresses.  This stress cannot be PSD since by moving that single blue vertex the stress energy must decrease.
 
%%%%%%%%%%%%%%%%%%%
\section{Computational  matters}\label{sect:computation}
%%%%%%%%%%%%%%%%%%%

An important property of universal rigidity is that often it can be
calculated efficiently using various SDP
algorithms.  For example, see \cite{Alfakih-Wolkowicz, Ye-talk,
Pataki, Ye-So-tensegrity, Wolkowicz-facial, Borwein-Wolkowicz, Ramana}
for information on this vast subject including facial reduction.  In
particular, if one is given the edge lengths $\e$ for a graph $G$, one
can use SDP to find a configuration $\p$ whose edge lengths
approximate $\e$.  More precisely, an $\epsilon$-approximate
configuration $\p$ can be found,  in some unconstrained dimension $D$ 
if it exists, in 
time  polynomial in $\log(1/\epsilon)$, where
$n$ is the number of vertices of $G$,
and $m$ is the number of members of $G$, 
as
described in \cite{Ye-talk}.
So this can be used to attempt to see if
the existence problem is feasible and to
attempt to find a
satisfying configuration when it is feasible.

But, as mentioned in Section \ref{sect:introduction}, one problem is
that even though the member lengths of the approximation are close to
the given lengths, the configuration may be quite a distance from one
implied by the actual constraints.  Small errors in the edge lengths
can imply large errors in the proposed configuration as in the
framework in Figure \ref{fig:iterated-rigid}, but see \cite{Javanmard}.  In principle, one could
use the calculation as evidence that a given configuration is
universally rigid in $\R^2$, but Figure \ref{fig:iterated-rigid-2}
shows that it may appear that $(G,\p)$ has equivalent configurations in
$\R^3$ or higher, even with $\epsilon > 0$ is very small.

In contrast to this ``primal appoach", we have shown in this paper that
when a framework is dimensionally or universally rigid, there must exist
a certificate, in the form of an iterated PSD stress, that
conclusively proves the dimensional or universal rigidity of the
framework.  

Although finding these stresses also involves solving an
SDP problem,
in many cases, though we can hope to \emph{exactly} solve this ``dual"
SDP.  At any level of the analysis here, there is a linear space of
restricted equilibrium stress matrices $\Omega^*_i$ as described in
Section \ref{sect:methods}.  If there is such a PSD matrix of
maximal rank among all such $\Omega^*_i$, then the PSD 
restricted equilibrium
stresses includes an open subset of the space of all restricted equilibrium
stresses.  In this case, it reasonable to expect that we can exactly
find such a solution.  Thus, even if the numerical solution from an SDP
solver is, say, PSD but not quite in 
restricted equilibrium, a sufficiently close
restricted equilibrium stress will still be PSD and of maximum rank.

In fact this ``maximal rank case" must always occur in the last step of our
iterated process 
so, for example, if the framework $(G,\p)$ is super
stable (in other words, there is only one step in the iterated process
described here), then the PSD solutions are
full dimensional within the linear space of equilibrium stress
matrices.  This is the situation if $\p$ is generic in $\R^d$, and the
framework $(G,\p)$ is universally rigid, since this must be super
stable by Theorem \ref{thm:Gortler-Thurston2}.  The two
examples on the left in Figure \ref{fig:tensegrity-examples} have that
property.

In other cases, though we may not be able to \emph{exactly} solve this
``dual" SDP, the example of Figure \ref{fig:non-open} shows a case where
the PSD  equilibrium stresses are all of lower rank than
the indefinite equilibrium stresses, and thus do
NOT form an open subset of
the space of equilibrium stresses.
If the dimension of PSD matrices is lower than
the dimension of all the equilibrium matrices, then
we may have to
resort to using the SDP to ``suggest" what an actual PSD matrix is
(since it will only converge to a
PSD matrix in the limit).

More generally,
when the configuration is not generic, you
have to ask: how is the configuration even defined?  It is possible to
create configurations precisely so that they become universally rigid.
For example, the symmetric tensegrities of many artists are created in
such a way that they become super stable, but not at all generic, not
even infinitesimally rigid, even though they are super stable.
Indeed, they often have certain symmetries that can be used to
simplify the calculations and create tensegrities that are super
stable.  The representation theory of some small finite groups can be
exploited to create these configurations.  A brief explanation is in
\cite{Connelly-Back}.  This is called \emph{form finding} in the
Engineering literature, as in \cite{Skelton-form, Schek}.

Stresses and iterated stresses might also be useful during the process
of calculating a realization $\p$ from an input graph $G$ and input
set of edge lengths $\e$. 
Note though, when we are just given
input lengths and are searching for an appropriate $\Omega$, we do not
have enough information to express the (restricted) equilibrium linear
constraint and can only use the ``energy linear constraint": $0=
\sum_{i<j} e_{ij}^2 \omega_{ij}$. Therefore, we do not expect to be in
a ``maximal rank" setting.  
Once we have computed the iterated stresses,
then we just need to look for $\p$ within the final affine set.  
As described in the appendix in
\cite{Gortler-affine-rigidity}, when $\p$ is universally rigid, this
calculation of $\p$ within its affine set can be done easily
by solving a 
certain small linear system.
(In the case
that $\p$ is not universally rigid but is only dimensionally rigid,
then that linear system will be singular. Still, since we 
have restricted ourselves to the correct affine set, 
we only need to solve  small SDP problem, which must be applied over the space of
$(d+1)$-by-$(d+1)$ matrices). 

In addition to the example in
\cite{Connelly-Back}, a graph coloring problem can be solved using
this idea as in \cite{Pak-color}.

%Suppose that the edge lengths $\e$ of a graph $G$ are known.  If one
%can calculate a PSD equilibrium stress $\omega$ beforehand, then it is
%even possible to calculate a realization $\p$ by simply finding the
%kernel of $\Omega$.  If it turns out that $(G,\p)$ is universally
%rigid, then one can find $\p$ by solving for the affine image of a
%configuration given by the kernel of $\Omega$.  In addition to the
%example in \cite{Connelly-Back}, a graph coloring problem can be
%solved using this idea as in \cite{Pak-color}.  If $(G,\p)$ not
%universally rigid, but just dimensionally rigid, then one is led to
%another small SDP problem, as mentioned in the appendix in
%\cite{Gortler-affine-rigidity}.

%Another question is at the first stage in the decomposition described here.  Assume that the framework $(G,\p)$ is universally rigid, where $\p$ is a configuration in $\R^d$.  Is it always possible to find a non-zero PSD equilibrium stress $\omega$, such that there is some vertex $i$, where all $\omega_{ij}=0$ for all $j$?  At the second stage, this is not possible for the example is Subsection \ref{subsect:4-pole}.
%%%%%%%%%%%%%%%%%%%
\section{Extensions}\label{sect:protocol}
%%%%%%%%%%%%%%%%%%%

In general, we propose the following procedure for determining/creating universally rigid frameworks and tensegrities. 
First a (tensegrity) graph $G$, and a corresponding configuration $\p$, is defined.  A priori, a sequence of affine sets in configuration space can be given as well, as in Section \ref{sect:affine-constraints}.   These sets may or may not be a consequence of the geometry of the configuration $\p$.  Then at each stage, one either calculates a PSD stress for the given configuration or one assumes that there is a corresponding affine constraint.  If the constraints are consistent, then one has a proof that the configuration is dimensionally rigid or universally rigid, depending on the stressed member directions.  For example, if there appears to be a (proper) PSD stress for a given affine set, one can assume that it exists and proceed, getting further affine sets.  It would depend on the circumstance as to whether the particular affine constraint is reasonable or not.  For example, in Figure \ref{fig:iterated-rigid}, one might suspect that the eight subdivided vertical members are straight, but initially not the others.  Only then might one suspect that the four smaller horizontal members are straight, etc.  After this is finished one can conclude that the whole framework is universally rigid.

The idea of assigning nested affine constraints is a generalization of the idea of a body-and-bar framework as defined by Tay and Whiteley in \cite{Tay-Whiteley, Tay-bodies}.   The concept of nested affine sets, introduced here, is closely related to the concepts of hypergraphs of points and affine rigidity introduced in \cite{Gortler-affine-rigidity}.  Also, a recent result in \cite{Connelly-Jordan-Whiteley-2013} shows that body-and-bar frameworks are generically globally rigid in $\R^d$ if they are generically redundantly rigid in $\R^d$.  

\begin{definition} \emph{Redundant rigidity} means that the framework is locally rigid, and remains so after the removal of any member.  
\end{definition}

It is also true \cite{Connelly-Jordan-Whiteley-2013} that such body-and-bar graphs always have a generic configuration that is universally rigid in $\R^d$ as well. 
 
%%%%%%%%%%%%%%%%%%%
\section{Possible future directions and questions}\label{sect:future}
%%%%%%%%%%%%%%%%%%%

It is also possible to use stresses to estimate the possible perturbations of a given tensegrity or framework. The sign of a PSD stress associated to each member corresponds to an inequality constraint.  If all of those constraints are such that at least one of the constraints is violated, we know that the edge length perturbed configuration cannot be achieved.  This imposes somewhat weak, but useful, conditions on which sets of members can increase or decrease in length.  If there are more PSD stresses on the members, there will be more of these sign constraints that can be calculated even if the tensegrity framework is not rigid.

One could use universal rigidity properties to understand flexible structures by adding members providing parameters for controlling the motion of a flexible framework.  For a fixed length of such additional members, the configuration could be determined.  As that length varies the whole configuration could flex in a controlled way.  

For the case of generic global rigidity, the notion of \emph{globally linked} pairs of vertices is discussed in \cite{Jordan-Jackson-linked, Jordan-Jackson-unique}. This means that although the whole framework may not be globally rigid, some pairs of vertices would be forced to have a fixed length for all equivalent configurations in the same dimension.   A similar question in the universally rigid category involving configurations in higher dimensions that satisfy the tensegrity inequality constraints would be interesting to explore. 

Even to determine whether a framework is universally rigid on the line is interesting.  In \cite{Jordan-universal-line} it is determined when a rigid one-dimensional complete bipartite bar-and-joint framework in the line is universally rigid, as well as several open questions in this direction.  We have a forthcoming paper that extends this result, and determines when any complete bipartite framework in any dimension is universally rigid.

%In particular it would be interesting to know if there are any universally rigid (bar) frameworks in the line that are not not super stable. In other words, do all universally rigid frameworks in the line have a single PSD stress matrix of rank $n-2$?

The weavings of \cite{Whiteley-Polarity-I, Whiteley-Polarity-II, Pollack-weaving, Pollack-weaving-II} concern lines in the plane that may or may not arise from projections of configurations of lines in a higher dimension.  Particularly, there is a relation to stresses of dual configurations in \cite{Whiteley-Polarity-I, Whiteley-Polarity-II}.  Can there be a connection to the poles in universal rigidity?

%%%%%%%%%%%%%%%%%%%
\section{Acknowledgement}\label{sect:acknowledgement}
%%%%%%%%%%%%%%%%%%%

We would like to thank Dylan Thurston for countless helpful
conversations on convexity.

\bibliographystyle{plain}
\bibliography{NSF-10,framework}

\end{document}